\begin{document}

\theoremstyle{plain}

\newtheorem{remark}{Remark}
\newtheorem{thm}{Theorem}[section]
\newtheorem{rem}[thm]{Remark}
\newtheorem{corollary}[thm]{Corollary}
\newtheorem{lemma}[thm]{Lemma}
\newtheorem{proposition}[thm]{Proposition}
\theoremstyle{definition}
\newtheorem*{defn}{Definition}
\newtheorem*{ThmA}{Theorem A}
\newtheorem*{ThmB}{Theorem B}
\newtheorem*{ThmC}{Theorem C}
\newtheorem*{PropB}{Proposition B}
\newtheorem*{CorB}{Corollary B}
\newtheorem*{CorC}{Corollary C}
\newtheorem*{CorD}{Corollary D}
\newtheorem*{ConjectureB}{Conjecture B}
\newenvironment{enumeratei}{\begin{enumerate}[\upshape (a)]}
    {\end{enumerate}}

\newenvironment{Enumeratei}{\begin{enumerate}[\upshape (A)]}
    {\end{enumerate}}

\def\irr#1{{\rm Irr}(#1)}
\def\cent#1#2{{\bf C}_{#1}(#2)}
\def\pow#1#2{{\mathcal{P}}_{#1}(#2)}
\def\syl#1#2{{\rm Syl}_#1(#2)}
\def\hall#1#2{{\rm Hall}_#1(#2)}
\def\nor{\trianglelefteq\,}
\def\oh#1#2{{\bf O}_{#1}(#2)}
\def\zent#1{{\bf Z}(#1)}
\def\sbs{\subseteq}
\def\gen#1{\langle#1\rangle}
\def\aut#1{{\rm Aut}(#1)}
\def\out#1{{\rm Out}(#1)}
\def\gv#1{{\rm Van}(#1)}
\def\fit#1{{\bf F}(#1)}
\def\frat#1{{\bf \Phi}(#1)}
\def\gammav#1{{\Gamma}(#1)}
\newcommand{\p}{{\mathbb P}}
\newcommand{\N}{{\mathbb N}}
\newcommand{\F}{{\mathbb F}}
\def\fitd#1{{\bf F}_{2}(#1)}
\def\irr#1{{\rm Irr}(#1)}
\def\dl#1{{\rm dl}(#1)}
\def\h#1{{\rm h}(#1)}
\def\ibr#1#2{{\rm IBr}_#1(#2)}
\def\cs#1{{\rm cs}(#1)}
\def\m#1{{\rm m}(#1)}
\def\n#1{{\rm n}(#1)}
\def\cent#1#2{{\bf C}_{#1}(#2)}
\def\hall#1#2{{\rm Hall}_#1(#2)}
\def\syl#1#2{{\rm Syl}_#1(#2)}
\def\nor{\trianglelefteq\,}
\def\norm#1#2{{\bf N}_{#1}(#2)}
\def\oh#1#2{{\bf O}_{#1}(#2)}
\def\Oh#1#2{{\bf O}^{#1}(#2)}
\def\zent#1{{\bf Z}(#1)}
\def\sbs{\subseteq}
\def\gen#1{\langle#1\rangle}
\def\aut#1{{\rm Aut}(#1)}
\def\gal#1{{\rm Gal}(#1)}
\def\alt#1{{\rm Alt}(#1)}
\def\sym#1{{\rm Sym}(#1)}
\def\out#1{{\rm Out}(#1)}
\def\gv#1{{\rm Van}(#1)}
\def\fit#1{{\bf F}(#1)}
\def\lay#1{{\bf E}(#1)}
\def\fitg#1{{\bf F^*}(#1)}

\def\GF#1{{\rm GF}(#1)}
\def\SL#1{{\rm SL}_{2}(#1)}
\def\PSL#1{{\rm PSL}_{2}(#1)}

\def\gammav#1{{\Gamma}(#1)}
\def\V#1{{\rm V}(#1)}
\def\E#1{{\rm E}(#1)}
\def\b#1{\overline{#1}}

 \def\sl#1#2{{\rm SL}_{#1}(#2)}
 \def\gl#1#2{{\rm GL}_{#1}(#2)}
 \def\cl#1#2{{\rm cl}_{#1}(#2)}
\def\Z{{\mathbb{Z}}}
\def\C{{\Bbb C}}
\def\Q{{\Bbb Q}}
\def\inv{^{-1}}
\def\irr#1{{\rm Irr}(#1)}
\def\irrv#1{{\rm Irr}_{\Bbb R}(#1)}
 \def\irrk#1{{\rm Irr}_{ {\rm rv}, K}(#1)}
 \def\irrc#1{{\rm Irr}_{C}(#1)}
  \def\irrf#1{{\rm Irr}_{\mathfrak{F}'}(#1)}
   \def\ext#1{{\rm Ext}(#1)}
   \def\irrh#1{{\rm Irr}_{H}(#1)}
  \def\re#1{{\rm Re}(#1)}
  \def\csrv#1{{\rm cs}_{\rm rv}(#1)}
   \def\clrv#1{{\rm Cl}_{\rm rv}(#1)}
     \def\clk#1{{\rm Cl}_{{\rm rv}, K}(#1)}
  \def\bip#1{{\rm B}_{p'}(#1)}
  \def\irra#1{{\rm Irr}_A(#1)}
   \def\irrs#1{{\rm Irr}_\sigma(#1)}
   \def\irrp#1{{\rm Irr}_{p'}(#1)}
 \def\cdrv#1{{\rm cd}_{\rm rv}(#1)}
 \def\bip#1{{\rm B}_{p'}(#1)}
\def\cdrv#1{{\rm cd}_{\rm rv}(#1)}
\def\cd#1{{\rm cd}(#1)}
\def\irrat#1{{\rm Irr}_{\rm rat}(#1)}
\def\cdrat#1{{\rm cd}_{\rm rat}(#1)}
\def \c#1{{\cal #1}}
\def\cent#1#2{{\bf C}_{#1}(#2)}
\def\syl#1#2{{\rm Syl}_#1(#2)}
\def\oh#1#2{{\bf O}_{#1}(#2)}
\def\Oh#1#2{{\bf O}^{#1}(#2)}
\def\zent#1{{\bf Z}(#1)}
\def\det#1{{\rm det}(#1)}
\def\ker#1{{\rm ker}(#1)}
\def\norm#1#2{{\bf N}_{#1}(#2)}
\def\alt#1{{\rm Alt}(#1)}
\def\iitem#1{\goodbreak\par\noindent{\bf #1}}
    \def \mod#1{\, {\rm mod} \, #1 \, }
\def\sbs{\subseteq}

\setlist[itemize]{font=\color{itemizecolor}}
\colorlet{itemizecolor}{.}
\setlist[enumerate]{font=\color{enumeratecolor}}
\colorlet{enumeratecolor}{.}
\setlist[description]{font=\bfseries\color{descriptioncolor}}
\colorlet{descriptioncolor}{.}

\def\cE{\bar{\rm E}}

\def \nq{\mathfrak{N}_q}

\marginparsep-0.5cm

\renewcommand{\thefootnote}{\fnsymbol{footnote}}
\footnotesep6.5pt

\title[On a Character Correspondence associated to $\mathfrak{F}$-projectors]{On a Character Correspondence associated to $\mathfrak{F}$-projectors}

\begin{abstract}
We study the conditions under which the head characters of a finite solvable group, as defined by I. M. Isaacs, behave well with respect to restriction. We also determine the intersection of the kernels of all head characters of the group. Using G. Navarro's definition of $\mathfrak{F}'$-characters, we generalize these results for any saturated formation $\mathfrak{F}$ containing the formation of nilpotent groups.
\end{abstract}

\author[M. J. Felipe]{Mar\'{\i}a Jos\'e Felipe}
\address{Mar\'{\i}a Jos\'e Felipe, Institut Universitari de Matem\`atica Pura i Aplicada, \newline
Universitat Polit\`ecnica de Val\`encia, 
Val\`encia, Spain.}
\email{mfelipe@mat.upv.es}

\author[I. Gilabert ]{Iris Gilabert}
\address{Iris Gilabert, Institut Universitari de Matem\`atica Pura i Aplicada, \newline
Universitat Polit\`ecnica de Val\`encia, 
Val\`encia, Spain.}
\email{igilman@posgrado.upv.es}

\author[L. Sanus]{Lucia Sanus}
\address{Lucia Sanus, Departament de Matem\`atiques, Facultat de
 Matem\`atiques, \newline
Universitat de Val\`encia,
46100 Burjassot, Val\`encia, Spain.}
\email{lucia.sanus@uv.es}

\thanks{\textit{2010 Mathematics Subject Classification}: primary 20C15, secondary 20D10.}

\thanks{\textit{Key words:} head characters, $\mathfrak{F}$-projectors, saturated formations}

\thanks{ This research is partially supported by the Generalitat Valenciana (CIAICO/2021/163). The second author is  supported by a grant (PAID-01-23 funded by the Universitat Polit\`ecnica de Val\`encia and subsequently CIACIF/2023/389 funded by the Generalitat Valenciana). The second and third authors are partially supported by the
Spanish Ministerio de Ciencia e Innovaci\'on
(PID2022-137612NB-I00 funded by MCIN/AEI/10.13039/501100011033 and `ERDF A way of making Europe'). The first and second authors are partially supported by Ayuda a Primeros Proyectos de Investigaci\'on (PAID-06-23) from Vicerrectorado de Investigaci\'on de la Universitat Polit\`ecnica de Val\`encia (UPV)}

\thanks{We would like to thank Gabriel Navarro for bringing this problem to our attention and for useful conversations on the subject. We would also like to thank the referee for their helpful comments and suggestions.}

\dedicatory{Dedicated to the memory of I. M. Isaacs.}

\maketitle

\section{Introduction}

All groups considered in this work are finite. In \cite{Isaacs}, I. M. Isaacs constructed a canonical subset of the complex irreducible characters of a solvable group $G$ associated to the linear characters of a Carter subgroup $C$ of $G$. Recall that a Carter subgroup of $G$ is a self-normalizing nilpotent subgroup of $G$ (see \cite{C}). Isaacs called these the {\it head characters} of $G$, and proved that the number of those was $|C/C'|$, the number of linear characters of $C$.

However, not many general properties of the head characters are known. In this work, we prove the following.

\begin{ThmA} Let $G$ a finite solvable group, let $C$ be a Carter subgroup of $G$ and let $\chi \in \irr G$ be a head character of $G$. Let $N$ be any normal subgroup of $G$. Then the following hold.
\begin{enumerate}
    \item[(a)] The restriction $\chi_N$ contains a unique $C$-invariant irreducible character $\theta$.
    \item[(b)] The restriction $\chi_{NC}$ contains a head character $\gamma$ of $NC$. Furthermore, $\gamma_N=\theta$. Hence, any other head character of $NC$ contained in $\chi_{NC}$ is of the form $\lambda\gamma$, for some linear $\lambda \in \irr {NC/N}$.
    \item[(c)] We have that $\gamma(1)$ divides $\chi(1)$ and that $\chi(1)/\gamma(1)$ divides $|G:NC|$.
\end{enumerate}
\end{ThmA}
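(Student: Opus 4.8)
The plan is to induct on $|G|$ and reduce the arbitrary normal subgroup $N$ to the case of a minimal normal subgroup by composing the correspondences along a chief series of $G$ lying in $N$; the extreme cases $N=1$ and $N=G$ are immediate. I will use repeatedly that $C$ remains a Carter subgroup of $NC$, since $C$ is nilpotent and $\norm{NC}{C}\le\norm{G}{C}=C$, and that a $C$-invariant $\theta\in\irr{N}$ is automatically $NC$-invariant: indeed $N\le I_G(\theta)$ always, and $C\le I_G(\theta)$ by hypothesis, so $NC\le T:=I_G(\theta)$. My expectation is that uniqueness in (a) is purely formal, while the \emph{existence} of a $C$-invariant constituent and the sharp matching in (b) will have to be fed by the inductive construction of the head characters themselves.

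For uniqueness in (a), let $\Omega\sbs\irr{N}$ be the $G$-orbit of constituents of $\chi_N$, which I identify with $G/T$. A conjugate $\theta^g$ is $C$-invariant precisely when $C\le I_G(\theta^g)=T^g$, that is, when $C^{g^{-1}}\le T$; any such $C^{g^{-1}}$ is nilpotent and self-normalizing in $T$, hence a Carter subgroup of the solvable group $T$, so by Carter's conjugacy theorem these subgroups form a single $T$-class. If $\theta^{g_1}$ and $\theta^{g_2}$ are both $C$-invariant, pick $t\in T$ with $C^{g_2^{-1}}=(C^{g_1^{-1}})^t=C^{g_1^{-1}t}$; then $g_2^{-1}t^{-1}g_1\in\norm{G}{C}=C$, so $g_1=tg_2c$ with $c\in C$, and therefore $\theta^{g_1}=\theta^{tg_2c}=(\theta^{g_2})^c=\theta^{g_2}$, using $\theta^{tg_2}=\theta^{g_2}$ (as $t\in T$) and the $C$-invariance of $\theta^{g_2}$. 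Hence there is at most one $C$-invariant constituent.

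Existence I would derive from the construction. Taking a minimal normal $K\le N$, if $K\le\ker\chi$ then $\chi$ is inflated from a head character of $G/K$ and the statement follows by induction. If $K\not\le\ker\chi$, let $\varphi$ be the $C$-invariant linear constituent of $\chi_K$ — its existence is exactly the defining feature of head characters — and pass to the Clifford correspondent $\psi\in\irr{I}$ over $\varphi$, which is itself a head character of $I=I_G(\varphi)\ge C$; applying the inductive hypothesis inside $I$ (when $I<G$) transports the conclusion back through $\chi=\psi^G$. For (b), the $NC$-invariant character $\theta$ together with the nilpotency of $NC/N\cong C/(C\cap N)$ lets the construction produce a head character $\gamma$ of $NC$ that extends $\theta$, so $\gamma_N=\theta$ and $\gamma(1)=\theta(1)$, and tracking the same reduction shows $\gamma$ occurs in $\chi_{NC}$. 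Finally, every head character of $NC$ appearing in $\chi_{NC}$ lies over $\theta$ (its unique $C$-invariant constituent on $N$ must be $\theta$), so by Gallagher's theorem it has the form $\beta\gamma$ with $\beta\in\irr{NC/N}$; the head characters among these turn out to be exactly the linear twists $\lambda\gamma$, which gives the description asserted in (b).

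For (c), write $\chi_N=e\sum_{i=1}^{t}\theta_i$ with $t=|G:T|$ and $\theta_i(1)=\theta(1)$, so that $\chi(1)=et\,\theta(1)$ while $\gamma(1)=\theta(1)$; thus $\gamma(1)\mid\chi(1)$ and $\chi(1)/\gamma(1)=et$. As $NC\le T$, the orbit factor $t=|G:T|$ divides $|G:NC|$, and it remains to show the ramification $e$ divides $|T:NC|$. The standard Clifford bound only gives $e\mid|T:N|$, and improving this to $e\mid|T:NC|=|T:N|/|C:C\cap N|$ is where I anticipate the real difficulty; I would obtain it from the head-character correspondence inside $T$ (the inductive hypothesis applied to $T<G$), which should absorb the factor $|C:C\cap N|$, treating the case $T=G$ directly from the construction. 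Together with the existence of the $C$-invariant constituent, this sharpened divisibility is the principal obstacle.
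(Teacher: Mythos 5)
Your uniqueness argument in part (a) is correct and complete, and it is essentially the paper's own (Proposition \ref{prop B}, following Isaacs): Carter subgroups of the inertia group $T$ are conjugate in $T$, and $\norm{G}{C}=C$ then forces any two $C$-invariant constituents to coincide. Everything else in your proposal, however, is deferred rather than proved, and what is deferred is precisely the content of Theorem A. Concretely: (i) the existence of a $C$-invariant constituent of $\chi_K$ for $K$ minimal normal is \emph{not} ``the defining feature of head characters'' --- the definition only supplies a strong $C$-pair series along \emph{some} $C$-composition series, and transporting it to a series passing through your chosen $K$ (and through $N$) is Theorem \ref{eslabon}, whose proof via Lemma \ref{5.2} is the paper's main technical work; (ii) your reduction rests on the claim that the Clifford correspondent $\psi\in\irr{I_G(\varphi)}$ of a head character is again a head character of $I_G(\varphi)$, which you never prove and which is not among the tools developed here; (iii) in (b), the existence of a head character $\gamma$ of $NC$ inside $\chi_{NC}$ is exactly Theorem \ref{NC}, for which you give no argument beyond ``tracking the same reduction''; moreover, passing from ``$\beta\gamma$ with $\beta\in\irr{NC/N}$'' to ``$\beta$ linear'' needs the fact that head characters of $NC$ restrict irreducibly to $N$ (because $(NC)^{\mathfrak N}\le N$, Theorem \ref{caracterización descendente}(a)), which you do not invoke; (iv) for (c) you explicitly concede that $e\mid|T:NC|$ is unproved.

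These gaps are not formalities. The crux is the fully ramified case: when the $C$-invariant character below a chief factor is fully ramified, there is no formal Clifford-theoretic reason that an extension above should lie over a prescribed extension below; the paper supplies this from the Navarro--Tiep theorem on self-normalizing nilpotent complements (Theorem \ref{malle&navarro&tiep}), fed through Lemma \ref{fully} and Theorem \ref{cor1} into Proposition \ref{6.1} and Theorem \ref{NC}. That some genuinely non-formal input is unavoidable is shown by the paper's counterexample $G=\texttt{SmallGroup(48,28)}$ for the formation $\mathfrak U$: the analogous ``tracking'' statement fails there, so no argument at the level of generality you are working at can close the gap. Note finally that the paper never factors $\chi(1)/\theta(1)$ as $et$ through the inertia group at all: part (c) is obtained by running along the strong pair series through $N$, where at each step either $S_iC=S_{i-1}C$ and the degrees agree (Lemma \ref{MH=UH}), or $S_i\cap S_{i-1}C=S_{i-1}$ and the degree ratio divides $|S_i:S_{i-1}|=|S_iC:S_{i-1}C|$; multiplying along the series gives $\chi(1)/\theta(1)\mid|G:NC|$ outright and sidesteps your obstacle $e\mid|T:NC|$ entirely. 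If you wish to complete your outline, the missing ingredients are Theorems \ref{eslabon}, \ref{5.4} and \ref{NC}; your inertia-group route for (c) would additionally require proving the Clifford-correspondence compatibility of head characters, and would still confront the $T=G$ (fully ramified) case directly.
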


We remark from part (c),  that if $G/N$ is nilpotent, then $\chi(1) = \gamma(1)$, and therefore $\chi_N$ is irreducible. On the other hand, if $N$ is the trivial group, part (c) tells us that $\chi(1)$ divides $|G : C|$. 
That is, part (c) proves at the same time both properties that Isaacs establishes in Theorem A of \cite{Isaacs}.

In Theorem B, we turn our attention to the kernels of the head characters and prove the following.

\begin{ThmB} Let $G$ be a finite solvable group and $C$ be a Carter subgroup of $G$. The intersection of the kernels of the head characters of $G$ is the largest normal subgroup
$N$ of $G$ such that $N \cap C$ is contained in $C'$.
\end{ThmB}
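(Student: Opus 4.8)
The plan is to pin down the subgroup
$M:=\bigcap_\lambda \ker{\chi_\lambda}$,
where $\lambda$ runs over the linear characters of $C$ and $\chi_\lambda\in\irr G$ is the head character associated to $\lambda$. Since $M$ is an intersection of kernels it is normal in $G$, so the statement reduces to the two containments: (i) $M\cap C\sbs C'$, and (ii) every normal subgroup $N$ of $G$ with $N\cap C\sbs C'$ satisfies $N\sbs M$. Establishing both simultaneously shows that a largest such normal subgroup exists and equals $M$. I will use two features of Isaacs' construction: first, that $\lambda$ occurs as a constituent of the restriction $(\chi_\lambda)_C$; and second, its compatibility with quotients, namely that $C\cap N\sbs\ker\lambda$ forces $N\sbs\ker{\chi_\lambda}$.

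For (i), the first feature suffices. If $c\in\ker{\chi_\lambda}$ then $c$ acts trivially in a representation affording $\chi_\lambda$, hence trivially on every irreducible constituent of $(\chi_\lambda)_C$; as $\lambda$ is such a constituent, $\lambda(c)=1$. Thus $\ker{\chi_\lambda}\cap C\sbs\ker\lambda$ for every $\lambda$, and intersecting over all linear $\lambda\in\irr C$ gives $M\cap C\sbs\bigcap_\lambda\ker\lambda=C'$, since the linear characters of $C$ separate $C/C'$.

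For (ii), let $N\nor G$ with $N\cap C\sbs C'$. Each linear $\lambda$ is trivial on $C'$, hence on $C\cap N$, so $C\cap N\sbs\ker\lambda$; the quotient-compatibility feature then yields $N\sbs\ker{\chi_\lambda}$ for every $\lambda$, whence $N\sbs M$. I would prove the needed compatibility by induction on $|G|$ using Theorem~A. Writing $\theta$ for the unique $C$-invariant (irreducible) constituent of $(\chi_\lambda)_N$ from Theorem~A(a), Theorem~A(b) supplies a head character $\gamma$ of $NC$ with $\gamma_N=\theta$. If $NC<G$, then $N\nor NC$, the subgroup $C$ is a Carter subgroup of $NC$, and $N\cap C\sbs C'$, so the inductive hypothesis gives $N\sbs\ker\gamma$; since $\gamma_N=\theta$ is irreducible, this forces $\theta=1_N$, and then Clifford's theorem (all constituents of $(\chi_\lambda)_N$ are $G$-conjugate, while $1_N$ is $G$-invariant) gives $(\chi_\lambda)_N=\chi_\lambda(1)\,1_N$, i.e. $N\sbs\ker{\chi_\lambda}$.

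The main obstacle is exactly the case the induction misses, namely $NC=G$, equivalently $G/N$ nilpotent. Here Theorem~A(c) already forces $(\chi_\lambda)_N=\theta$ to be irreducible, so the whole problem collapses to proving $\theta=1_N$, and I expect this to require the construction of the head characters directly rather than Theorem~A alone. Concretely, since $C\cap N\sbs C'$ one has $G/N\cong C/(C\cap N)$ with exactly $|C:C'|$ linear characters, matching the total number of head characters of $G$; the crux is the compatibility of Isaacs' correspondence with the quotient map, showing that the inflations of these $|C:C'|$ linear characters of the nilpotent group $G/N$ are themselves head characters of $G$ and hence, by the count, exhaust them. Granted this compatibility, every head character of $G$ is inflated from $G/N$, so $\theta=1_N$ in the remaining case, and the two containments complete the proof.
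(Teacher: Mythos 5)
Your proposal has a genuine gap, and it sits exactly where you lean on your ``first feature'': the claim that $\lambda$ occurs as a constituent of $(\chi_\lambda)_C$. Neither Isaacs' construction nor Navarro's yields a canonical bijection $\lambda \mapsto \chi_\lambda$ with this property that you could cite; what those sources establish is a canonically defined \emph{set} of head characters together with the count $|C:C'|$, obtained through correspondences that involve non-canonical choices of extensions. What your argument for containment (i) actually needs is weaker --- that every linear character of $C$ occurs under \emph{some} head character of $G$ --- but that statement is precisely the ``going-up'' half of the Navarro--Tiep theorem (Theorem \ref{malle&navarro&tiep}(b)), which enters this paper only through the fully ramified analysis (Lemma \ref{fully}), Theorem \ref{cor1} and Proposition \ref{6.1}(b); it is a substantive result, not a formal consequence of the definitions, and Theorem A gives only the opposite direction (with $N=1$, Theorem A(b) says each head character lies over some linear character of $C$, not that each linear character is hit). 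Moreover, since your induction for containment (ii) invokes the full Theorem B for the proper subgroup $NC$, the gap in (i) contaminates (ii) as well. The paper closes (i) by a different and cheaper counting/deflation argument: writing $M$ for the intersection of the kernels, every head character of $G$ deflates to a head character of $G/M$ (this uses the strong-pair-series characterization, Theorems \ref{5.4} and \ref{eslabon}), whence
$$|C:C'|=|\irrf{G}|\le|\irrf{G/M}|=|CM/M:(CM/M)'|=|C:C'(C\cap M)|\le |C:C'|,$$
forcing $C\cap M\sbs C'$. That route has the further advantage of working for every saturated formation containing $\mathfrak{N}$ (Theorem \ref{kernels}), where the going-up statement you would need is false in general for even-order groups, as the example \texttt{SmallGroup(48,28)} shows.

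Your treatment of (ii) is closer to the mark. The case $NC<G$ --- Theorem A(a),(b), the inductive hypothesis applied to $NC$, then Clifford's theorem --- is correct, and is genuinely different from the paper's argument (Lemma \ref{ker}, which runs a strong $C$-pair series through $N$ and trivializes the characters below $N$ one $H$-composition factor at a time via Lemma \ref{lema aux ker}). The case $NC=G$, which you leave open as ``the crux,'' is real but much easier to fill than you suggest: when $G=NC$ and $N\cap C\sbs C'$, the inflations of the $|G/N:(G/N)'|=|C:C'|$ linear characters of $G/N$ are \emph{linear} characters of $G$, and every linear character $\chi$ of $G$ is automatically a head character, since for any $C$-composition series $\{S_i\}$ the pairs $(S_i,\chi_{S_i})$ form a strong $C$-pair series (the linear character $\chi_{S_iC}$ extends $\chi_{S_i}$); this is exactly the remark following the descending characterization in Section 4 of the paper. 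By Isaacs' count these $|C:C'|$ characters exhaust the head characters, so every head character contains $N$ in its kernel. In summary: (ii) becomes correct once this observation is added, but (i) as written rests on an unavailable ``feature''; to repair it you must either import the going-up theorem (legitimate for $\mathfrak{F}=\mathfrak{N}$, where it holds with no odd-order hypothesis) or replace your argument by the counting argument above.
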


Theorem B is the exact analog of an unpublished result of Navarro which we present here with 
his permission.

\begin{ThmC}[Navarro] Let $p$ be a prime, let $G$ be a finite $p$-solvable group and let $P$ be a Sylow $p$-subgroup of $G$. The intersection of the kernels of the characters of $G$ of degree not divisible by $p$ is the largest normal subgroup $N$ of $G$ such that $\norm N P$ is contained in $P'$.
\end{ThmC}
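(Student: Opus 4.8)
The plan is to identify the intersection $K:=\bigcap_{\chi}\ker\chi$, taken over all $\chi\in\irrp G$, and show it coincides with the asserted largest normal subgroup. The starting observation is a counting reformulation: for $N\nor G$, inflation is a degree-preserving bijection from $\irr{G/N}$ onto $\{\chi\in\irr G: N\sbs\ker\chi\}$, so it restricts to a bijection $\irrp{G/N}\to\{\chi\in\irrp G: N\sbs\ker\chi\}$. Hence $N\sbs K$ (equivalently, every character of $G$ of $p'$-degree is trivial on $N$) if and only if $|\irrp G|=|\irrp{G/N}|$. My goal is therefore to prove that, for \emph{every} $N\nor G$,
\[
N\sbs K \iff \norm N P \sbs P'.
\]
Granting this, the families $\{N\nor G: N\sbs K\}$ and $\{N\nor G:\norm N P\sbs P'\}$ literally coincide, and since the former has $K$ as its unique maximal element, $K$ is the largest normal subgroup with $\norm N P\sbs P'$, which is the assertion.

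To evaluate the two cardinalities I would invoke the McKay counting equality, known for $p$-solvable groups, applied to both $G$ and $G/N$: writing $H=\norm G P$, one has $|\irrp G|=|\irrp H|$, while $|\irrp{G/N}|=|\irrp{\norm{G/N}{PN/N}}|$. A Frattini argument identifies the latter local subgroup: since $PN/N\in\syl p{G/N}$ and $\norm G{PN}=N\,\norm G P$, we obtain
\[
\norm{G/N}{PN/N}=\norm G{PN}/N=\norm G P\,N/N\cong H/M,
\]
where $M:=\norm N P=N\cap H\nor H$. Thus the desired equivalence reduces to the purely local statement: with $H=\norm G P$ and $M\nor H$, one has $|\irrp H|=|\irrp{H/M}|$ if and only if $M\sbs P'$.

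This local step is where the argument becomes concrete. Because $P$ is a normal Sylow $p$-subgroup of $H$, Schur--Zassenhaus gives $H=P\rtimes Q$ with $Q$ a $p'$-group, and I would show $\irrp H=\irr{H/P'}$. Indeed, if $p\nmid\chi(1)$ then any irreducible constituent $\theta$ of $\chi_P$ has $\theta(1)$ a power of $p$ dividing $\chi(1)$, forcing $\theta$ to be linear, so $P'\sbs\ker\chi$; conversely, any $\chi\in\irr{H/P'}$ lies over the abelian normal subgroup $P/P'$, whence $\chi(1)$ divides $|H:P|$ by Ito's theorem and is a $p'$-number. Consequently
\[
\bigcap_{\chi\in\irrp H}\ker\chi=\bigcap_{\chi\in\irr{H/P'}}\ker\chi=P',
\]
so every character of $H$ of $p'$-degree is trivial on $M$ exactly when $M\sbs P'$; by the same inflation/counting argument used above, now applied to $H$ and $H/M$, this is equivalent to $|\irrp H|=|\irrp{H/M}|$, as required.

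Assembling these pieces yields $N\sbs K\iff\norm N P\sbs P'$ for every $N\nor G$, and hence the theorem. I expect the genuine content to lie in the two global inputs---the McKay equality for $p$-solvable groups and the Frattini identification of the local subgroup of the quotient---rather than in the local computation, which is routine Clifford theory. The main point to handle with care is that the counting equivalence must be established for every normal subgroup $N$ simultaneously, so that the maximality of $K$ follows automatically once the two families of normal subgroups are shown to coincide.
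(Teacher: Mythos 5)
Your proposal is correct, and every step checks out: the inflation/counting reformulation ($N\sbs K$ if and only if $|\irrp{G/N}|=|\irrp G|$), the Frattini identification $\norm {G/N}{PN/N}=\norm G {PN}/N=N\norm G P/N\cong \norm G P/\norm N P$, and the local computation that, since $P\nor \norm G P$, Clifford theory and Ito's theorem give $\irrp{\norm G P}=\irr{\norm G P /P'}$, whence $\bigcap_{\psi\in\irrp{\norm G P}}\ker\psi=P'$. However, this is a genuinely different route from the paper's, which never invokes the McKay equality. The paper proves the two inclusions directly: it first uses Lemma 2.1 of \cite{IN} ($\norm{NM}P=\norm N P\norm M P$) to obtain a largest normal $N$ with $\norm N P\sbs P'$; for such $N$, Tate's theorem yields a normal $p$-complement $W$ of $N$ with $\cent W P=1$, and the Glauberman correspondence forces the $P$-invariant constituent of $\chi_W$ to be trivial for each $\chi\in\irrp G$, so $W\sbs\ker\chi$ and one reduces to $N$ a $p$-group inside $P'$, handled by a linear constituent of $\chi_P$; conversely, for $K$ the intersection of the kernels, Theorem 7.7 of \cite{N2} gives a normal $p$-complement $Y$ of $K$, extension and induction of $P$-invariant characters of $Y$ force $\cent Y P=1$ via Glauberman again, and inducing a suitable linear character of $P$ finishes. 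The trade-off is clear: your argument is shorter, and the existence of the largest such normal subgroup falls out automatically (you never need the Isaacs--Navarro lemma), but it consumes the full Okuyama--Wajima theorem (McKay for $p$-solvable groups) twice --- a result far stronger than the statement being proved, and one you must cite explicitly, as it is not in the paper's bibliography. The paper's proof uses only the lighter tools (Glauberman correspondence, Tate's theorem) that are themselves the engine behind Okuyama--Wajima, and it produces structural byproducts along the way (normal $p$-complements of $N$ and $K$, trivial centralizers); in that sense both proofs rest on the Glauberman correspondence, yours at one remove.
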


What is the relationship between the head characters and the $p'$-degree characters of a solvable group? If $\mathfrak{F}$ is a saturated formation and $G$ is a solvable group, Navarro defined in \cite{N} a canonical subset $\irrf G$  of the irreducible characters of $G$ of size $|\irr{N_G(H)/H'}|$, where $H$ is an $\mathfrak{F}$-projector of $G$.
If $\mathfrak{F}$ is the formation of nilpotent groups, then $H$ is a Carter subgroup,
and he proved that $\irrf G$ are the head characters of $G$; in particular, reproving Isaacs result. If $\mathfrak{F}$ is the formation of $p$-groups, then $H$ is a Sylow $p$-subgroup
and this reproved the McKay conjecture for solvable groups. Particularly, if a Carter subgroup of $G$ is a Sylow $p$-subgroup of $G$, then the head characters of $G$ are exactly the irreducible characters of $G$ of degree not divisible by $p$.

In fact, our results are far more general. When $\mathfrak{F}$ is a saturated formation containing the class of nilpotent groups, we shall prove convenient versions of Theorems A and B for $\irrf G$ (see respectively Theorems \ref{th A general} and \ref{kernels}) at the same time that we characterize Navarro's $\mathfrak{F}'$-characters (see Theorem \ref{caracterización descendente}).

In the literature, we may find  similar correspondences to the one we study. For instance, E. C. Dade and D. Gajendragadkar constructed in \cite{Dade,Gajendragadkar}  another subset of $\irr G$, this time associated to the system normalizers of a solvable group $G$. This subset was described by Gajendragadkar from the better known \textit{fully factorable characters}. In the same way we do, they worked in the more general context of saturated formations containing the class of nilpotent groups.

There are still several questions on the head characters and Carter subgroups
that remain open. Among them,  if the character table of $G$ determines the set of head characters.
We are confident that our results might be helpful in solving these and other related questions.

\section{Some properties on projectors}

Recall that a class of groups $\mathfrak{F}$ is called a \textit{formation} if it is closed under quotients and satisfies the following property: whenever $G/N, G/M \in \mathfrak{F}$ for some  finite group $G$, it follows that $G/(M \cap N) \in \mathfrak{F}$. A formation $\mathfrak{F}$ is said to be \textit{saturated} if $G \in \mathfrak{F}$ if and only if $G/\Phi(G) \in \mathfrak{F}$, where $\Phi(G)$ denotes the Frattini subgroup of $G$. Throughout the remainder of the section, we will denote by $\mathfrak{F}$ a saturated formation. Examples of saturated formations are the classes of finite $p$-groups with $p$ a prime, finite $\pi$-groups with $\pi$ a set of primes, finite nilpotent groups, and finite supersolvable groups. It is well known, however, that abelian groups form a formation that is not saturated. As basic references on this topic, see \cite{Doerk} and \cite{Rob}.

The notion of formations was first introduced by Gaschütz in \cite{Gas1} as a context where both definitions of Carter and Sylow subgroups could be unified under the more general concept of $\mathfrak{F}$-covering subgroups. Gaschütz also proved that, when $\mathfrak{F}$ is saturated and $G$ is solvable, these subgroups coincide with \textit{$\mathfrak{F}$-projectors}, which are subgroups characterized by the following Lemma (see 9.5.6 in \cite{Rob}).

\begin{lemma}
    Let $\mathfrak{F}$ be a saturated formation and $G$ a solvable group. There exists exactly one conjugacy class of subgroups $H$ of $G$ such that, for every normal subgroup $N$ of $G$, the subgroup $HN/N$ satisfies the following properties:
\begin{enumerate}
    \item[(a)] $HN/N \in \mathfrak{F}$;
    \item[(b)] if $HN/N$ is a subgroup of the subgroup $V/N$ of $G/N$ with $V/N \in \mathfrak{F}$, then $HN/N = V/N$.
\end{enumerate}
In other words, $HN/N$ is $\mathfrak{F}$-maximal in $G/N$ for all normal subgroups $N$ of $G$.
\end{lemma}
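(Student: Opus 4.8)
The plan is to derive the lemma from the classical Gasch\"utz theory of $\mathfrak{F}$-covering subgroups, establishing existence and conjugacy by induction on $|G|$ and then identifying covering subgroups with projectors. Recall that $E \sbs G$ is an \emph{$\mathfrak{F}$-covering subgroup} if $E \in \mathfrak{F}$ and, whenever $E \sbs U \sbs G$ and $U_0 \nor U$ with $U/U_0 \in \mathfrak{F}$, one has $U = E U_0$. I would first record two routine hereditary properties: if $E$ is a covering subgroup of $G$ and $M \nor G$, then $EM/M$ is a covering subgroup of $G/M$; and if $E \sbs U \sbs G$, then $E$ remains a covering subgroup of $U$. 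These, together with the observation that conjugates of covering subgroups are covering subgroups, reduce the whole statement to three tasks: existence, conjugacy, and the equivalence \emph{covering subgroup} $\Longleftrightarrow$ \emph{projector}.

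For existence I would induct on $|G|$. If $G \in \mathfrak{F}$ then $H = G$ serves, so assume $G \notin \mathfrak{F}$ and fix a minimal normal subgroup $N$; by solvability $N$ is an elementary abelian $p$-group. By induction $G/N$ has a covering subgroup $U/N$ with $N \sbs U \sbs G$. If $U < G$, then $U$ has a covering subgroup $H$ by induction, and a direct check---using that $H$ covers $U$ and that $U/N$ covers $G/N$---shows $H$ is a covering subgroup of $G$. If $U = G$, i.e.\ $G/N \in \mathfrak{F}$, then saturation forces $N \not\sbs \frat{G}$: were $N \sbs \frat{G}$, the group $G/\frat{G}$ would be a quotient of $G/N \in \mathfrak{F}$, whence $G \in \mathfrak{F}$, a contradiction. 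Hence some maximal subgroup $M$ satisfies $G = MN$, and minimality of $N$ gives $M \cap N = 1$, so $M \cong G/N \in \mathfrak{F}$ is a complement to $N$. Since $N$ is an irreducible $M$-module, the only subgroups between $M$ and $G$ are $M$ and $G$ itself, and a short computation with the formation axiom then yields the covering property of $M$.

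For conjugacy I would again induct on $|G|$, taking covering subgroups $E_1, E_2$ and a minimal normal $N$. Their images are covering subgroups of $G/N$, hence conjugate by induction, so after replacing $E_2$ by a conjugate we may assume $E_1 N = E_2 N =: L$. If $L < G$ both $E_i$ are covering subgroups of $L$ and are conjugate there by induction. The decisive case is $L = G$, where each $E_i$ is a complement to the minimal normal subgroup $N$ and one must prove two such complements are conjugate; this is where I expect the main obstacle. It cannot follow from being a complement alone, since complements to an abelian normal subgroup are in general classified by a first cohomology group and need not be conjugate. The point is that the hypotheses $G/N \in \mathfrak{F}$ and $G \notin \mathfrak{F}$, combined with the formation axiom, eliminate precisely this obstruction---this is the heart of Gasch\"utz's theorem, and it is exactly here that saturation is indispensable (for a non-saturated formation covering subgroups need not be conjugate or even exist). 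Finally I would identify the two notions: a covering subgroup is $\mathfrak{F}$-maximal in every quotient because the covering condition descends to $G/M$, giving a projector; conversely the same inductive scheme shows a projector is a covering subgroup. As conjugates of a projector are projectors, the projectors constitute exactly one conjugacy class, which is the assertion of the lemma.
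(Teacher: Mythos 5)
The paper itself gives no proof of this lemma: it is quoted from the literature (Robinson, 9.5.6), and your outline follows exactly the classical Gasch\"utz route that the cited source uses, namely covering subgroups, induction on $|G|$, and the identification of covering subgroups with projectors. Your existence argument is essentially complete and correct: saturation enters exactly where you place it, to force $N \not\sbs \frat G$ when $G/N \in \mathfrak{F}$ but $G \notin \mathfrak{F}$, and the verification that the complement $M$ is a covering subgroup is the computation you indicate (if $U_0 \nor G$ with $G/U_0 \in \mathfrak{F}$, then $N \cap U_0 \nor G$ cannot be trivial, since otherwise $G \cong G/(N \cap U_0) \in \mathfrak{F}$ by the formation axiom; hence $N \sbs U_0$ and $MU_0 = G$).

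There is, however, a genuine gap at the step you yourself flag as ``the heart of Gasch\"utz's theorem.'' In the crucial conjugacy case $E_1N = E_2N = G$, you assert, with no argument, that the hypotheses $G/N \in \mathfrak{F}$, $G \notin \mathfrak{F}$ and the formation axiom ``eliminate the obstruction'' to conjugacy of the two complements. That assertion \emph{is} the theorem; it does not follow formally from the formation axiom, and a proof must be supplied. The standard way to close it is a further reduction: set $C = \cent G N$ and observe that $E_i \cap C \nor G$ (it is normalized by $E_i$ and centralized by $N$). If $C > N$, choose a minimal normal subgroup $M$ of $G$ inside $E_1 \cap C$; since images of covering subgroups are covering subgroups, induction applies in $G/M$, and the order count $|E_2^{\,g}M| = |E_1| = |E_2^{\,g}|$ forces $M \sbs E_2^{\,g}$, finishing this case. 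If $C = N$, then no minimal normal subgroup $L/N$ of $G/N$ can be a $p$-group (for then $\zent L = N$ and $L \cap E_i$ would be a nontrivial normal subgroup of $G$ meeting $N$ trivially, hence centralizing $N$, contradicting $C = N$), so some chief factor $L/N$ is a $p'$-group; Schur--Zassenhaus conjugates $L \cap E_2$ to $L \cap E_1$ inside $L$, and the identity $E_i = \norm G {L \cap E_i}$ (which follows from $\cent N {L\cap E_i} = 1$) then conjugates $E_2$ to $E_1$. Note also that your side remark misplaces the role of saturation: saturation is indispensable for \emph{existence} (the $\frat G$ argument), while the crucial conjugacy case runs on the formation axioms and the covering property alone. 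Finally, your last sentence hides real content as well: the lemma claims the subgroups $H$ with the stated $\mathfrak{F}$-maximality property form \emph{exactly one} conjugacy class, so you must also prove that every projector is a covering subgroup, not only the converse; ``the same inductive scheme'' is a reasonable strategy, but it is not carried out.
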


As previously mentioned, Carter subgroups are the projectors with respect to the saturated formation of nilpotent groups, which we will denote by $\mathfrak{N}$ from now on. Similarly, if $p$ is a prime number and $\pi$ is a set of prime numbers, then $p$-Sylow subgroups and $\pi$-Hall subgroups are the $\mathfrak{F}$-projectors for the saturated formations of $p$-groups and $\pi$-groups, respectively.

Listed below are some fundamental properties of $\mathfrak{F}$-projectors which we will need in our proofs.

\begin{proposition} \label{basics}Let $\mathfrak{F}$ be a saturated formation and $G$ be a solvable group. Suppose $H$ is an $\mathfrak{F}$-projector of $G$. Then
\begin{itemize}
    \item[(a)] if $U$ is a subgroup of $G$ containing $H$, then $H$ is also an $\mathfrak{F}$-projector of $U$.
\end{itemize}
Moreover, if $N$ is a normal subgroup of $G$, then
\begin{itemize}
    \item[(b)] $HN/N$ is an $\mathfrak{F}$-projector of $G/N$;
    \item[(c)] if $U/N$ is an $\mathfrak{F}$-projector of $G/N$, then $U=H^gN$ for some $g \in G$;
    \item[(d)] $\norm {G}{NH} = N \norm G H$.
\end{itemize}
Finally, if $\mathfrak{N} \subseteq \mathfrak{F}$, that is, if the class of nilpotent groups is contained in $\mathfrak{F}$, then
\begin{itemize}
    \item[(e)] if $U$ is a subgroup of $G$ containing $H$, then $\norm G U = U$. In particular, $H$ is self-normalizing.
\end{itemize}
\end{proposition}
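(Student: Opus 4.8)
The plan is to deduce all five statements from the characterization in the preceding Lemma together with the fact---valid because $\mathfrak{F}$ is saturated and $G$ is solvable---that $\mathfrak{F}$-projectors coincide with $\mathfrak{F}$-covering subgroups. Recall that $H$ is an $\mathfrak{F}$-covering subgroup of $G$ precisely when $H \in \mathfrak{F}$ and, whenever $H \le V \le G$ and $M \nor V$ with $V/M \in \mathfrak{F}$, one has $V = HM$. Part (a) is then immediate: the covering-subgroup condition quantifies over chains $H \le V \le G$, so restricting attention to those $V$ with $V \le U$ gives exactly the covering-subgroup condition inside $U$. Hence $H$ is a covering subgroup, and therefore an $\mathfrak{F}$-projector, of $U$.

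For part (b), fix $N \nor G$ and let $M/N$ be an arbitrary normal subgroup of $G/N$, so that $N \le M \nor G$. Under the correspondence theorem, the isomorphism $(G/N)/(M/N) \cong G/M$ carries the image of $HN$ to $HM/M$, and $\mathfrak{F}$-maximality is preserved by isomorphism; since $HM/M$ is $\mathfrak{F}$-maximal in $G/M$ because $H$ is a projector of $G$ and $M \nor G$, we conclude that $(HN/N)(M/N)/(M/N)$ is $\mathfrak{F}$-maximal in $(G/N)/(M/N)$. Running this over all normal subgroups of $G/N$ shows that $HN/N$ is a projector of $G/N$. Part (c) follows at once: by the Lemma the $\mathfrak{F}$-projectors of $G/N$ form a single conjugacy class, and $HN/N$ is one of them by (b), so any projector $U/N$ of $G/N$ satisfies $U/N = (HN/N)^{gN}$ for some $g \in G$, that is, $U = H^g N$.

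For part (d), the inclusion $N\norm G H \subseteq \norm G {NH}$ is clear, since $N$ normalizes $NH$ (as $N \nor G$) and $\norm G H$ normalizes both $N$ and $H$. For the reverse inclusion I would run a Frattini argument. Let $g \in \norm G {NH}$. By (a), $H$ is a projector of $NH$; moreover $H^g \le (NH)^g = NH$ is also a projector of $NH$, since projectors are permuted by conjugation. By the conjugacy part of the Lemma applied inside $NH$, there is $c \in NH$ with $H^g = H^c$, so $gc^{-1} \in \norm G H$. Writing $c = nh$ with $n \in N$, $h \in H$, and using $N \nor G$ to move $n$ past an element of $\norm G H$, one obtains $g \in N\norm G H$. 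Hence $\norm G {NH} \subseteq N\norm G H$, and equality follows.

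Finally, for part (e), assume $\mathfrak{N} \subseteq \mathfrak{F}$ and let $H \le U \le G$. By (a), $H$ is a projector of $\norm G U$, and $U \nor \norm G U$ by definition of the normalizer. Applying (b) to the normal subgroup $U$ of $\norm G U$, the image $HU/U$ is a projector of $\norm G U / U$; but $H \le U$ forces $HU/U$ to be trivial, so the trivial subgroup is $\mathfrak{F}$-maximal in $\norm G U / U$. If this quotient were nontrivial it would contain a minimal normal subgroup, necessarily elementary abelian and hence lying in $\mathfrak{N} \subseteq \mathfrak{F}$, contradicting the $\mathfrak{F}$-maximality of the trivial subgroup. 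Thus $\norm G U = U$, and taking $U = H$ gives the self-normalizing property. The step carrying the real content is the equivalence between projectors and covering subgroups underlying (a), on which (b), (d), and (e) all ultimately rest through the persistence of the projector property in intermediate subgroups; the Frattini argument in (d) also demands some care to place the conjugating element in $N\norm G H$ rather than merely in $NH \cdot \norm G H$.
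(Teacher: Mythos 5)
Your proposal is correct and takes essentially the same route as the paper: your Frattini-style conjugacy argument for (d) and your reduction of (e) to the triviality of the projector of $\norm G U/U$ followed by a contradiction with $\mathfrak{F}$-maximality (the paper uses a cyclic subgroup $\langle x\rangle$ where you use a minimal normal subgroup, an immaterial difference) are precisely the paper's proofs. The only divergence is that the paper cites (a)--(c) to Robinson's book, whereas you supply the standard arguments for them (the covering-subgroup equivalence for (a), the correspondence theorem plus the characterizing Lemma for (b), and conjugacy of projectors for (c)), all of which are sound.
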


\begin{proof}
Assertions (a) to (c) are proved in Section 9.5 of \cite{Rob}. If $g \in \norm{G}{HN}$, then both $H$ and $H^g$ are $\mathfrak{F}$-projectors of $HN=H^gN$. Using (c), there exists $n \in HN$ which can be assumed to be in $N$ and such that $H^n=H^g$. Then $gn^{-1} \in \norm{G}{H}$. Thus $\norm{G}{HN} \leq N \norm{G}{H}$. Since the converse is trivial, this proves (d).

Now suppose $\mathfrak{N} \subseteq \mathfrak{F}$ and let $H$ be an $\mathfrak{F}$-projector of a solvable group $G$. Let $U$ be a subgroup of $G$ containing $H$. By (a), $H$ is also an $\mathfrak{F}$-projector of $\norm G U$, and thus by (b) we have that $U/U$ is an $\mathfrak{F}$-projector of $\norm G U /U$. By definition of projectors, this means that $\{1\}$ is $\mathfrak{F}$-maximal in $\norm G U /U$. If $U\neq \norm G U$, then there exists some nontrivial element $x \in \norm G U/U$ and the subgroup $\langle x \rangle$ being nilpotent (and thus in $\mathfrak{F}$) contradicts the $\mathfrak{F}$-maximality of $\{1\}$. This proves (e).
\end{proof}

In further sections, it will be of particular importance to assume $\mathfrak{N}\subseteq \mathfrak{F}$ in order to ensure, by Proposition \ref{basics}(e), that the $\mathfrak{F}$-projectors are self-normalizing. Note, for now, that there exist saturated formations both satisfying and not satisfying this hypothesis. For instance, the previously mentioned saturated formations of $\pi$-groups (if $\pi$ is a finite set of primes) do not contain all nilpotent groups. However, the classes of supersolvable groups, solvable groups of nilpotent length at most $l$ (in particular, meta-nilpotent groups), solvable $p$-nilpotent groups or $p$-decomposable groups, for some prime $p$, are examples of saturated formations containing the nilpotent groups (see IV.3 in \cite{Doerk}).

\medskip

The \textit{$\mathfrak{F}$-residual} $G^{\mathfrak{F}}$ of $G$ is its smallest normal subgroup such that the quotient $G/G^{\mathfrak{F}}$ lies in $\mathfrak{F}$.
The following result shows a connection between an $\mathfrak{F}$-projector of a group and its $\mathfrak{F}$-residual, when the latter is abelian.

\begin{thm}[Theorem IV.5.18 of \cite{Doerk}]\label{residual}
Suppose that  $\mathfrak{F}$ is a saturated formation. Let $G$ be a solvable group and $G^{\mathfrak F}$ its $\mathfrak{F}$-residual. If $G^{\mathfrak F}$ is abelian, then $G^{\mathfrak F}$ is complemented in $G$, and its complements are the $\mathfrak{F}$-projectors of $G$.
\end{thm}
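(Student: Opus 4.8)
The plan is to fix an $\mathfrak{F}$-projector $H$ of $G$ and write $A=G^{\mathfrak{F}}$. I would establish, in order, that $H$ supplements $A$, that $H$ avoids $A$, and finally that the complements of $A$ are exactly the conjugates of $H$.

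To see that $HA=G$, note that by Proposition \ref{basics}(b) the subgroup $HA/A$ is an $\mathfrak{F}$-projector of $G/A$. Since $G/A\in\mathfrak{F}$ by definition of the residual, $G/A$ is $\mathfrak{F}$-maximal in itself, so the $\mathfrak{F}$-maximality of projectors forces $HA/A=G/A$, i.e.\ $HA=G$. Because $A$ is abelian and normal and $G=HA$, the subgroup $K:=H\cap A$ is normalized by $H$ and centralized by $A$, hence $K\trianglelefteq G$.

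The heart of the matter is to prove $K=1$, which I would do by induction on $|G|$. Let $L$ be a minimal normal subgroup of $G$; it is elementary abelian and $(G/L)^{\mathfrak{F}}=AL/L$ is abelian, so the inductive hypothesis applies to $G/L$ and tells us that the $\mathfrak{F}$-projector $HL/L$ is a complement of $AL/L$ in $G/L$. Pulling this back gives $HL\cap AL=L$ and hence $K\le A\cap L$. Consequently $K=1$ as soon as some minimal normal $L$ is not contained in $A$, or as soon as $A$ contains two distinct minimal normal subgroups; and if $A$ is itself minimal normal then, since $K\le A$ and $K\trianglelefteq G$, we have $K\in\{1,A\}$, with $K=A$ giving $G=HA=H\in\mathfrak{F}$ and hence $A=1$, a contradiction. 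This leaves the monolithic case: $G$ has a unique minimal normal subgroup $L$, with $L\le A$, $A>L$, and $K\le L$.

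In this case it suffices to show that $H$ avoids $L$, i.e.\ $H\cap L=1$, for then $K\le H\cap L=1$. One first checks $L\le\Phi(G)$: otherwise $L$ has a maximal complement $M$ of trivial core, and since $A\cap M\trianglelefteq G$ lies in that core we would get $A=L(A\cap M)=L$, against $A>L$. Ruling out $L\le H$ when $L\le\Phi(G)$ is the step I expect to be the main obstacle, and it is exactly where both saturation and the abelian hypothesis are essential: here I would invoke the cover--avoidance theory of $\mathfrak{F}$-projectors for saturated formations, namely that a projector avoids every $\mathfrak{F}$-eccentric chief factor, together with the fact that a chief factor lying inside the abelian residual $A=G^{\mathfrak{F}}$ is $\mathfrak{F}$-eccentric; these two facts yield $H\cap L=1$. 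Alternatively, since $A$ is then an abelian $p$-group, one can argue concretely through a Sylow $p$-subgroup containing $A$ via Gaschütz's splitting theorem. Granting this, together with $HA=G$ from the second step, $H$ is a complement of $A$.

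Finally, for the identification I would argue as follows. The $\mathfrak{F}$-projectors form a single conjugacy class and, by the above, each is a complement of $A$. Conversely, any complement $C$ satisfies $C\cong G/A\in\mathfrak{F}$ and, as $C\cap A=1$ and $CA=G$, it avoids every chief factor below $A$ and covers every chief factor above it---precisely the cover--avoidance profile of an $\mathfrak{F}$-projector. Since the abelian residual forces all complements of $A$ to be conjugate, the set of complements coincides with the conjugacy class of $H$, which is exactly the set of $\mathfrak{F}$-projectors of $G$.
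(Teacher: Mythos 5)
Your reductions are fine as far as they go: $HA=G$, the normality of $K=H\cap A$ in $G$, the passage to $G/L$ by induction, the elimination of every case except the monolithic one, and the verification that $L\le\Phi(G)$ in that case are all correct. But the proof collapses at exactly the point you flag as the main obstacle, because the principle you invoke there is false: $\mathfrak{F}$-projectors of solvable groups do \emph{not} avoid $\mathfrak{F}$-eccentric chief factors. Take $G=\sym 4$ and $\mathfrak{F}=\mathfrak{N}$: the Carter subgroup is a Sylow $2$-subgroup $D_8$, and the chief factor $V_4/1$ is $\mathfrak{N}$-eccentric (since $V_4\not\le \zent{G}$), yet it is \emph{covered} by $D_8$, because $V_4\le D_8$. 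Avoidance of eccentric chief factors is a property of $\mathfrak{F}$-\emph{normalizers} (Carter--Hawkes theory), not of projectors; projectors are only guaranteed to cover the $\mathfrak{F}$-central factors. So the crux --- ruling out $L\le H$ when $L\le\Phi(G)$, which is precisely where saturation must do real work (the statement fails for non-saturated $\mathfrak{F}$: for the formation of abelian groups and $G=Q_8$ one has $G^{\mathfrak{F}}=\zent{Q_8}\le\Phi(Q_8)$ abelian but uncomplemented) --- rests on a false lemma. Your auxiliary claim that chief factors inside an abelian residual are $\mathfrak{F}$-eccentric is true but itself nontrivial, and in any case useless without the avoidance statement; the alternative route through Gasch\"utz's splitting theorem is only gestured at, with no argument for why $A$ splits inside a Sylow subgroup.

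The identification step has a second genuine gap: you assert that ``the abelian residual forces all complements of $A$ to be conjugate.'' Complements of an abelian normal subgroup need not be conjugate (consider one factor of $C_p\times C_p$); their conjugacy when $A=G^{\mathfrak{F}}$ and $\mathfrak{F}$ is saturated is exactly the conjugacy half of Gasch\"utz's complementation theorem, i.e.\ essentially the result being proved, so it cannot be quoted. Likewise, having the same cover--avoidance profile as a projector does not characterize projectors, so the converse direction (every complement is a projector) is unproven. For calibration: the paper offers no proof of this statement at all --- it cites Theorem IV.5.18 of Doerk--Hawkes --- so the benchmark is that reference. Your opening reductions reproduce the standard ones correctly, but the two essential points (the Frattini/saturation step and the identification of complements with projectors) constitute the substance of that theorem and are missing from your sketch.
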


\begin{remark}\label{caja}
As a consequence of the previous result, let $\mathfrak{F}$ be a saturated formation, $G$ a solvable group and  $K=G^\mathfrak{F}$. If  $L \nor G$ is such that $L\subseteq K$ and $K/L$ is abelian, then $$G=KH  \text{ and } K \cap LH=L\,, $$ 
\noindent 
where $H$ is an $\mathfrak{F}$-projector of $G$. 
Furthermore, under these conditions, using Proposition \ref{basics}(c), any complement of $K/L$ in $G/L$ is of the form $H^gL/L$ for some $g \in G$.
\end{remark}

\smallskip

We finish this section with a result that can be found as Satz 2.1 of \cite{H} or Theorem IV.5.4 of \cite{Doerk}.

\begin{lemma}\label{hup} Suppose that $\mathfrak{F}$ is a saturated formation. Let $G$ be a finite solvable group and $H$ an $\mathfrak{F}$-projector of $G$. If $N, M$ are normal subgroups of  $G$, then 
$$MN\cap H=(M\cap H)(N\cap H)\,.$$
\end{lemma}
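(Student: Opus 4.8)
The inclusion $(M\cap H)(N\cap H)\subseteq MN\cap H$ is immediate: since $M,N\nor G$, both $M\cap H$ and $N\cap H$ are normal subgroups of $H$, so their product is a subgroup of $H$, and it plainly lies in $MN$. All the content is in the reverse inclusion, which I would establish by induction on $|MN|$. As a preliminary reduction, replacing $G$ by $HMN$ and invoking Proposition~\ref{basics}(a) keeps $H$ a projector, leaves $M,N$ normal, and changes neither side of the identity; so I may assume $G=HMN$.

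Next I would dispose of the non-minimal configurations. If $M\cap N\neq\{1\}$, I pick a minimal normal subgroup $L$ of $G$ with $L\subseteq M\cap N$ and pass to $G/L$; the induction hypothesis applied to $M/L,N/L$ together with Dedekind's modular law yields the relation $(MN\cap H)L=(M\cap H)(N\cap H)L$. Given $x\in MN\cap H$, one then writes $x=ab\ell$ with $a\in M\cap H$, $b\in N\cap H$, $\ell\in L$; since $a,b,x\in H$ one has $\ell\in H$, and $\ell\in L\subseteq N$, so $\ell\in N\cap H$ may be absorbed into $b$, giving $x\in(M\cap H)(N\cap H)$. If instead $M\cap N=\{1\}$ but one factor, say $N$, is not a minimal normal subgroup of $G$, I choose a minimal normal $N_0$ of $G$ properly contained in $N$ and apply the induction hypothesis twice—once in $G/N_0$ to the pair $(MN_0/N_0,\,N/N_0)$ and once in $G$ to the pair $(M,N_0)$, each having strictly smaller product—and clean up again with the modular law. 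What survives is the base case: $M$ and $N$ are two distinct minimal normal subgroups of $G$ with $M\cap N=\{1\}$.

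In this base case $M,N$ are elementary abelian and centralize each other, so $V:=MN=M\times N$ is abelian. The crucial observation is that the trace $C:=H\cap V$ is normal in $G$: it is normal in $H$ because $V\nor G$, and it is centralized by $V$ since $V$ is abelian and $C\subseteq V$; writing $G=VH$ gives $C^{g}=C$ for all $g\in G$. Now $C\cap M$ and $C\cap N$ are submodules of the irreducible $G$-modules $M$ and $N$, hence each is trivial or the full factor. If $C\cap M=M$ (or symmetrically $C\cap N=N$), the modular law gives $C=M(C\cap N)=(C\cap M)(C\cap N)$ and we are done. The only remaining possibility is $C\cap M=C\cap N=\{1\}$ with $C\neq\{1\}$, which I would rule out as follows: $CN/N$ is a nontrivial normal subgroup of $G/N$ lying inside the minimal normal subgroup $MN/N$, so $CN/N=MN/N$, whence $CN=MN$ and $M\subseteq HN$, i.e. $G=HN$; symmetrically $G=HM$. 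Since $H\cap M=C\cap M=\{1\}$ and $H\cap N=\{1\}$, this forces $G/M\cong H\cong G/N$ with $H\in\mathfrak{F}$, so $G^{\mathfrak{F}}\subseteq M\cap N=\{1\}$; then $G\in\mathfrak{F}$, whence $H=G$ and $C=V\supseteq M$, contradicting $C\cap M=\{1\}$.

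The main obstacle is exactly this base case: the possibility that, when $M\cong N$ as modules (necessarily of the same prime), the trace $C$ is a nontrivial ``diagonal'' meeting neither factor. The two facts that defeat it are the normality of $C=H\cap V$ in $G$ and the membership $H\in\mathfrak{F}$, which together collapse the configuration onto a group lying in $\mathfrak{F}$. I note that the argument uses only Proposition~\ref{basics}(a) and the defining properties of a projector (namely $H\in\mathfrak{F}$ and its $\mathfrak{F}$-maximality in quotients), and does not require Theorem~\ref{residual}, although that result would give an alternative route via the complemented residual.
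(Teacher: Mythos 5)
Your proof is correct, and it necessarily takes a different route from the paper, because the paper gives no proof of Lemma~\ref{hup} at all: it is quoted from the literature (Satz 2.1 of \cite{H}, Theorem IV.5.4 of \cite{Doerk}), where the arguments are developed inside the covering-subgroup formalism for saturated formations. Your argument is instead self-contained: after reducing to $G=HMN$ via Proposition~\ref{basics}(a), the two quotient reductions (by a minimal normal $L\leq M\cap N$, respectively by $N_0<N$) are handled by Dedekind's law together with the absorption trick $\ell=b^{-1}a^{-1}x\in H\cap L\subseteq N\cap H$, and all the weight falls on the base case of two distinct minimal normal subgroups with trivial intersection. There the two key observations are exactly right: the trace $C=H\cap MN$ is normal in $G$ (normal in $H$, centralized by the abelian group $MN$, and $G=H\,MN$), and a nontrivial ``diagonal'' trace with $C\cap M=C\cap N=1$ forces $CN=MN$ and $CM=MN$, hence $G=HM=HN$ with $H\cap M=H\cap N=1$, so $G/M\cong H\cong G/N$ lie in $\mathfrak{F}$, so $G=G/(M\cap N)\in\mathfrak{F}$ by the formation axiom, so $H=G$ by $\mathfrak{F}$-maximality, contradicting $C\cap M=1$. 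This is a worthwhile trade: your proof consumes only facts the paper already states (Proposition~\ref{basics}(a),(b), the characterization of projectors by $\mathfrak{F}$-maximality in quotients, and the formation axiom), needs neither Theorem~\ref{residual} nor saturation beyond the existence and basic behavior of projectors, and would make the paper self-contained at the cost of roughly a page; the citation buys brevity. Two small loose ends you should tie off explicitly: the degenerate case where $M$ or $N$ is trivial (there your second reduction cannot produce $N_0$, but the identity is vacuous), and the phrase ``clean up again with the modular law'' in that second reduction, which deserves one more line saying it is the same absorption argument as before, using $N_0\cap H\subseteq N\cap H$.
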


\section{Extensions of irreducible characters}

In this section, we present results on extensions of irreducible characters that behave well with certain subgroups and saturated formations. We first recall two results on character correspondences.

\begin{lemma}[Corollary 4.3 of \cite{Ispi}] \label{induccionbiyeccion}
Let $N \trianglelefteq G$ and $K \subseteq G$ with $NK=G$ and $N \cap K=M$. Let $\varphi \in \irr M$ be invariant in $K$, and assume $\theta=\varphi^N$ is irreducible. Then induction defines a bijection $\irr {K \mid \varphi} \rightarrow \irr{G \mid \theta}$.
\end{lemma}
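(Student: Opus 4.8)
The plan is to reduce the whole statement to the single inner product identity
\[
[\psi_1^G,\psi_2^G]_G=[\psi_1,\psi_2]_K\qquad\text{for all }\psi_1,\psi_2\in\irr{K\mid\varphi},
\]
and then to deduce surjectivity from a count of ramification indices. Granting the identity, the choice $\psi_1=\psi_2=\psi$ gives $[\psi^G,\psi^G]_G=1$, so each $\psi^G$ is irreducible, while $\psi_1\neq\psi_2$ forces $[\psi_1^G,\psi_2^G]_G=0$, so distinct characters induce to distinct irreducibles. Thus induction is already a well-defined injection of $\irr{K\mid\varphi}$ into $\irr G$, and the remaining tasks are to locate its image inside $\irr{G\mid\theta}$ and to see that it exhausts $\irr{G\mid\theta}$.

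For the preliminary bookkeeping I would first record that $\theta$ is $G$-invariant: it is fixed by $N$ because inner automorphisms fix characters, and it is fixed by $K$ since $\theta^k=(\varphi^k)^N=\varphi^N=\theta$ for $k\in K$ (using that $K$ normalizes $N$ and fixes $\varphi$), hence by $G=NK$. Next, since $M\trianglelefteq K$ and $\varphi$ is $K$-invariant, Clifford theory gives $\psi_M=e_\psi\varphi$ with $e_\psi=[\psi_M,\varphi]$. Because $G=NK$ and $N\cap K=M$ there is a single $(N,K)$-double coset, so Mackey's formula yields $(\psi^G)_N=(\psi_M)^N=(e_\psi\varphi)^N=e_\psi\theta$. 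This shows that every constituent of $\psi^G$ lies over $\theta$ and that the ramification index is preserved, namely $[(\psi^G)_N,\theta]=e_\psi$.

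The heart of the argument, and the step I expect to be the main obstacle, is the inner product identity. By Frobenius reciprocity and Mackey's formula,
\[
[\psi_1^G,\psi_2^G]_G=\sum_{t\in K\backslash G/K}\big[\,(\psi_1)|_{K\cap K^t},\ (\psi_2^t)|_{K\cap K^t}\,\big]_{K\cap K^t}.
\]
Since $G=NK$, I may choose the double coset representatives $t$ inside $N$, with $t=1$ for the trivial coset and $t\in N\setminus M$ for the nontrivial ones; the trivial coset contributes exactly $[\psi_1,\psi_2]_K$, and the task is to kill every other term. Here I would pass to $M\cap M^t\subseteq K\cap K^t$: a common constituent $\eta$ of $(\psi_1)|_{K\cap K^t}$ and $(\psi_2^t)|_{K\cap K^t}$ would, after restriction to $M\cap M^t$ and using $(\psi_1)_M=e_1\varphi$ and $(\psi_2)_M=e_2\varphi$, force $\varphi$ and $\varphi^t$ to share a constituent on $M\cap M^t$. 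But this is exactly what Mackey's irreducibility criterion forbids, since the hypothesis $\theta=\varphi^N\in\irr N$ is equivalent to the disjointness of $\varphi$ and $\varphi^t$ on $M\cap M^t$ for every $t\in N\setminus M$. Hence the spurious terms vanish and the identity holds; the delicate point to be carried out carefully is the bookkeeping of representatives together with the reduction of the $K$-level overlap to the $M$-level overlap.

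Finally, for surjectivity I would compare two counts. Since $\varphi$ is $K$-invariant and $M\trianglelefteq K$, one has $\sum_{\psi\in\irr{K\mid\varphi}}e_\psi^2=|K:M|$; since $\theta$ is $G$-invariant and $N\trianglelefteq G$, one has $\sum_{\chi\in\irr{G\mid\theta}}f_\chi^2=|G:N|$, where $f_\chi=[\chi_N,\theta]$. As $G=NK$ and $N\cap K=M$ yield $G/N\cong K/M$, these two totals agree. The injection $\psi\mapsto\psi^G$ sends $\psi$ to an irreducible $\chi$ with $f_\chi=e_\psi$, so its image already contributes $\sum_\psi e_\psi^2=|K:M|=|G:N|=\sum_\chi f_\chi^2$ to the second sum; as every $f_\chi\geq 1$, no member of $\irr{G\mid\theta}$ can be omitted. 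Therefore induction is onto, and the bijection is established.
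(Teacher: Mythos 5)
Your proof is correct. Note, however, that the paper does not prove this lemma at all: it is imported verbatim as Corollary 4.3 of \cite{Ispi}, so there is no in-paper argument to compare against, and what you have written is a self-contained proof of the cited result. Your route is sound at every step: the preliminary facts ($\theta$ is $G$-invariant; Clifford's theorem gives $\psi_M=e_\psi\varphi$; the single $(K,N)$-double coset gives $(\psi^G)_N=(\psi_M)^N=e_\psi\theta$, placing the image inside $\irr{G\mid\theta}$ with $f_{\psi^G}=e_\psi$); the Mackey inner-product formula with $(K,K)$-double coset representatives chosen in $N$; the key reduction, namely that a common constituent of $(\psi_1)_{K\cap K^t}$ and $(\psi_2^t)_{K\cap K^t}$ restricts to a common constituent of $\varphi$ and $\varphi^t$ on $M\cap M^t$, which for $t\in N\setminus M$ contradicts Mackey's irreducibility criterion applied to $\theta=\varphi^N$; and finally the count $\sum_\psi e_\psi^2=|K:M|=|G:N|=\sum_\chi f_\chi^2$, which forces surjectivity since every $f_\chi\geq 1$. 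This is essentially the standard proof of Isaacs' corollary (Frobenius reciprocity, Mackey, and a ramification count), so methodologically it is exactly what one would expect behind the citation. The only cosmetic point is to fix one convention for $K^t$ versus ${}^tK$ in the Mackey formula; since $N$ is closed under inversion, representatives can be taken in $N$ under either convention and nothing in the argument changes.
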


\begin{lemma}[Corollary 4.2 of \cite{Ispi}]\label{restriccionbiyeccion}
Let $N \trianglelefteq G$ and $K \subseteq G$ with $NK=G$ and $N \cap K = M$. Let $\theta\in\irr N$ be invariant in $G$, and assume $\varphi=\theta_M $ is irreducible. Then restriction defines a bijection $\irr{G \mid \theta} \rightarrow \irr{K \mid \varphi}$.
\end{lemma}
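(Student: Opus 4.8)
The plan is to read this as a statement about the two character triples $(G,N,\theta)$ and $(K,M,\varphi)$ and to show that restriction realizes an isomorphism between them; concretely, I would run the whole argument through a single projective representation of $G$ associated to $\theta$. First, the preliminaries. Since $N \nor G$ we have $M=N\cap K \nor K$, and the map $kM\mapsto kN$ is a well-defined isomorphism $\sigma\colon K/M\to G/N$ (surjectivity from $NK=G$, injectivity from $N\cap K=M$). Because $\theta$ is $G$-invariant and $M$ is $K$-invariant, for $k\in K$ we get $\varphi^{k}=(\theta_M)^{k}=(\theta^{k})_M=\theta_M=\varphi$, so $\varphi$ is $K$-invariant and $(K,M,\varphi)$ is a genuine character triple. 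Moreover, if $\psi\in\irr{G\mid\theta}$ then Clifford's theorem (with $\theta$ invariant) gives $\psi_N=e\theta$ for some integer $e$, whence $\psi_M=e\varphi$; thus every irreducible constituent of $\psi_K$ already lies over $\varphi$. So the only real issue is to show that $\psi\mapsto\psi_K$ maps $\irr{G\mid\theta}$ bijectively onto $\irr{K\mid\varphi}$.

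The heart of the argument is the projective representation. Since $\theta$ is $G$-invariant, fix a projective representation $\mathcal P$ of $G$ whose restriction $\mathcal P_N$ is an ordinary representation affording $\theta$ and whose factor set $\alpha$ is constant on cosets of $N$ in each variable, i.e. factors through $G/N$ (Isaacs, Chapter 11). The standard parametrization gives a bijection $\irr{G\mid\theta}\to\mathrm{Irr}_{\alpha^{-1}}(G/N)$, written $\psi\leftrightarrow\xi$, characterized by $\psi(g)=\chi_{\mathcal P}(g)\,\xi(gN)$, where $\chi_{\mathcal P}$ is the projective character of $\mathcal P$ and $\mathrm{Irr}_{\alpha^{-1}}$ denotes the irreducible projective characters with the indicated factor set. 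Now restrict everything to $K$: the representation $\mathcal P_K$ is projective with $(\mathcal P_K)_M=\mathcal P_M$ affording $\theta_M=\varphi$ (ordinary and, by hypothesis, irreducible), and its factor set is $\alpha$ restricted to $K\times K$, which is constant on cosets of $M=K\cap N$ and hence factors through $K/M$. Therefore $\mathcal P_K$ is an associated projective representation for the triple $(K,M,\varphi)$, and it yields a bijection $\irr{K\mid\varphi}\to\mathrm{Irr}_{(\alpha')^{-1}}(K/M)$, where $\alpha'=\sigma^{*}\alpha$ is the factor set of $\alpha$ pulled back along $\sigma$.

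It then remains to identify restriction with these parametrizations. Pulling back along $\sigma$ gives a bijection $\sigma^{*}\colon\mathrm{Irr}_{\alpha^{-1}}(G/N)\to\mathrm{Irr}_{(\alpha')^{-1}}(K/M)$. For $\psi\in\irr{G\mid\theta}$ with associated $\xi$, and for $k\in K$, the defining formula gives
\[
\psi_K(k)=\chi_{\mathcal P}(k)\,\xi(kN)=\chi_{\mathcal P_K}(k)\,(\sigma^{*}\xi)(kM),
\]
since $\chi_{\mathcal P}(k)=\chi_{\mathcal P_K}(k)$ and $\xi(kN)=\xi(\sigma(kM))=(\sigma^{*}\xi)(kM)$. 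By the parametrization for $(K,M,\varphi)$, the right-hand side is exactly the irreducible character of $K$ lying over $\varphi$ that corresponds to $\sigma^{*}\xi$. Hence $\psi_K$ is irreducible, lies over $\varphi$, and $\psi\mapsto\psi_K$ is the composite of the three bijections $\irr{G\mid\theta}\to\mathrm{Irr}_{\alpha^{-1}}(G/N)\xrightarrow{\sigma^{*}}\mathrm{Irr}_{(\alpha')^{-1}}(K/M)\to\irr{K\mid\varphi}$; in particular it is a bijection, as required.

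I expect the main obstacle to be the careful bookkeeping of factor sets: verifying that the factor set of $\mathcal P_K$, viewed through $K/M$, is genuinely $\sigma^{*}\alpha$ (so that the two sets of projective characters are matched by $\sigma$) and that the parametrizing formula $\psi=\chi_{\mathcal P}\cdot\xi$ survives restriction without a correcting scalar. As a sanity check on the normalizations, note that when $\alpha$ is a coboundary — equivalently, when $\theta$ extends to some $\hat\theta\in\irr G$ — all projectivity disappears: $\hat\theta_K$ is then an extension of $\varphi$ to $K$, and the argument collapses to Gallagher's theorem on both sides together with the isomorphism $\sigma$, recovering the bijection directly.
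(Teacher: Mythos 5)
Your argument is correct: the existence of a projective representation $\mathcal P$ with $\mathcal P_N$ affording $\theta$ and factor set constant on $N$-cosets, the tensor-product parametrization $\psi=\chi_{\mathcal P}\cdot\xi$ of $\irr{G\mid\theta}$, and the observation that the factor set of $\mathcal P_K$ on $K/M$ is \emph{exactly} $\sigma^{*}\alpha$ (not merely cohomologous to it, so no correcting scalar arises) are all valid, and together they do identify restriction with the composite of three bijections. Note that the paper offers no proof of this lemma at all — it is quoted verbatim from Corollary 4.2 of Isaacs' $\pi$-separable paper — and your character-triple argument is essentially the standard machinery behind that cited result (there the bijection is extracted from an isomorphism of the triples $(G,N,\theta)$ and $(K,M,\varphi)$), so your proposal matches the intended proof rather than diverging from it.
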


Given an $\mathfrak{F}$-projector $H$ of a solvable group $G$, we adopt the usual notation $\irrh N$ for the $H$-invariant irreducible characters of an $H$-invariant subgroup $N$ of $G$.  The following theorem, due to Navarro and appearing in \cite{N}, is key to our objectives. Note that the subgroup denoted by $H$ in the cited reference corresponds to $LH$ in our notation.

\begin{thm}[Theorem 3.5 of \cite{N}]\label{3.5 original}

Let $\mathfrak{F}$ be a saturated formation. Suppose $G$ is a finite solvable group. Assume that $K/L$ is abelian, where $K$ and $L$ are normal in $G$. Let $H$ be an $\mathfrak{F}$-projector of $G$, and assume that $KH \trianglelefteq G$ and $K \cap \norm G H L = L$. We have the following.
\begin{itemize}
    \item[(a)] If $\theta \in \irrh K$, then there exists a unique $\varphi \in \irrh L$ under $\theta$.
    \item[(b)] If $\varphi \in \irrh L$, then there exists a unique $\theta \in \irrh K$ over $\varphi$.
    \end{itemize}
    Now fix $\varphi$ and $\theta$ as before.
    \begin{itemize}
    \item[(c)] We have that $\theta$ extends to $KH$ if and only if $\varphi$ extends to $LH$.
\end{itemize}

\end{thm}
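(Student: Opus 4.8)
The plan is to reduce the equivalence to the comparison of a single cohomology class. First I would record the structural consequences of the hypotheses. Since $H\le\norm G H$, the assumption $K\cap\norm G H L=L$ forces $K\cap HL=L$, and because $H\cap K\subseteq K\cap HL=L$ we obtain $H\cap K=H\cap L$. Hence $KH/K\cong H/(H\cap K)=H/(H\cap L)\cong LH/L$; write $\bar H$ for this common quotient. As $\theta$ is $H$-invariant and automatically $K$-invariant, it is $KH$-invariant, and likewise $\varphi$ is $LH$-invariant, so both extendibility problems are governed by obstruction classes living in $H^2(\bar H,\C^\times)$, and the whole point will be to match them.

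The first reduction is to the inertia group. Put $T=I_K(\varphi)$. Because $\varphi$ is $H$-invariant and $K/L$ is abelian, $T\nor K$ and $H$ normalizes $T$, so $T\nor KH$; the modular law then gives $I_{KH}(\varphi)=TH$ and $K\cap TH=T$, whence $[KH:TH]=[K:T]$. Let $\psi\in\irr{T\mid\varphi}$ be the Clifford correspondent of $\theta$, so that $\theta=\psi^K$; by uniqueness of the Clifford correspondent $\psi$ is again $H$-invariant, and since $I_K(\psi)=T$ one gets $I_{KH}(\psi)=TH$ and $\irr{KH\mid\theta}=\irr{KH\mid\psi}$. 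The Clifford correspondence $\beta\mapsto\beta^{KH}$ is then a bijection $\irr{TH\mid\psi}\to\irr{KH\mid\theta}$, and comparing degrees through $[KH:TH]=[K:T]$ shows that $\beta$ extends $\psi$ exactly when $\beta^{KH}$ extends $\theta$. Thus $\theta$ extends to $KH$ if and only if $\psi$ extends to $TH$, and we have reduced to the homogeneous situation in which $\varphi$ is $T$-invariant and $\psi_L=e\varphi$.

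It remains to prove that $\psi$ extends to $TH$ if and only if $\varphi$ extends to $LH$. Here Theorem \ref{residual} and Remark \ref{caja}, applied to $TH/L$, identify $T/L$ as the $\mathfrak F$-residual and show that $\bar H$ complements it; concretely $T\cap LH=L$ and $T\cdot LH=TH$, so $LH$ plays the role of a relative complement to $T$ in $TH$. When $e=1$ the character $\psi_L=\varphi$ is irreducible, and Lemma \ref{restriccionbiyeccion} (with $T\nor TH$, the subgroup $LH$, and $M=L$) gives a degree-preserving bijection $\irr{TH\mid\psi}\to\irr{LH\mid\varphi}$; an extension of $\psi$ restricts to an extension of $\varphi$ and conversely, which settles this case.

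The main obstacle will be the ramified case $e>1$, where neither bijection lemma applies and one must argue with projective representations directly. Choosing a projective representation $\mathcal P$ of $TH$ that affords $\psi$ on $T$, its factor set is inflated from a class $[\bar\alpha]\in H^2(\bar H,\C^\times)$ whose vanishing is equivalent to the extendibility of $\psi$; restricting $\mathcal P$ along the section $LH$, where $\mathcal P$ acts on $L$ by the scalar $\varphi$, one sees that the extendibility of $\varphi$ to $LH$ is governed by the same class. The heart of the matter is to verify this coincidence of classes using only that $T/L$ is abelian and that the relevant data are $\bar H$-invariant; note that an $\bar H$-invariant isotropic complement to the abelian section $T/L$ need not exist, so the equality cannot be obtained by exhibiting an intermediate extension and must instead rest on the fully ramified structure of $\psi$ over $\varphi$.
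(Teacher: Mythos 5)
This statement is not proved in the paper at all: it is imported verbatim from Navarro's work (Theorem 3.5 of \cite{N}), so your attempt has to stand on its own. Its preliminary reductions are correct and standard: the identities $K\cap HL=L$ and $H\cap K=H\cap L$, the passage to $T=I_K(\varphi)$ with $T\nor KH$, $I_{KH}(\varphi)=TH$, $K\cap TH=T$, the $H$-invariance of the Clifford correspondent $\psi$, the degree comparison showing that $\theta$ extends to $KH$ if and only if $\psi$ extends to $TH$, and the unramified case $e=1$ via Lemma \ref{restriccionbiyeccion}. (One misstep there: you do not need, and cannot in general have, that $T/L$ is the $\mathfrak{F}$-residual of $TH/L$; what you actually use, namely $T\cap LH=L$ and $T\cdot LH=TH$, follows directly from $LH\subseteq \norm G H L$ and the hypothesis $K\cap \norm G H L=L$.)

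There are, however, two genuine gaps. First, parts (a) and (b) are never proved: the existence and uniqueness of the $H$-invariant constituent $\varphi$ under $\theta$ (and of $\theta$ over $\varphi$) is a real claim in this non-coprime setting --- it is exactly where the hypothesis $K\cap\norm G H L=L$ does its work --- and your proof of (c) presupposes this pairing. Second, and more seriously, the case $e>1$ is precisely the mathematical content of the theorem, and you do not prove it: you assert that the obstruction class in $H^2(\bar H,\C^\times)$ for extending $\psi$ to $TH$ coincides with the one for extending $\varphi$ to $LH$, and you yourself flag the verification as ``the heart of the matter'' without supplying it. Note also that your reduction only yields $\psi_L=e\varphi$ homogeneous, not fully ramified (i.e.\ $e^2=|T:L|$), since $T/L$ is abelian but need not be a chief factor, so even the setup of the deferred step requires a further reduction. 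That this deferred step is not a routine verification is shown by the paper itself: Lemma \ref{fully} and Theorem \ref{cor1}, which compare extensions across a fully ramified section, require $|G|$ odd (or $\mathfrak{F}=\mathfrak{N}$ together with the Navarro--Tiep theorem), and the example $G=\texttt{SmallGroup(48,28)}$ with $K\cong Q_8$, $L=K'$ shows that extensions of $\theta$ need not lie over any extension of $\varphi$; so whatever argument establishes the equivalence of the two extension problems in general cannot proceed by producing compatible extensions from one another, which is what a naive ``restrict the projective representation and read off the same cocycle'' argument would attempt. As it stands, the proposal reproduces the easy Clifford-theoretic outer layer of Navarro's theorem and leaves its core unproved.
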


Under the hypotheses of Theorem \ref{3.5 original}, we have that $G=K \norm G H$ by Proposition \ref{basics}(d). Notice that Theorem \ref{3.5 original}(c) gives us information on irreducible character extensions from $K$ to $KH$. We wish to have control on irreducible character extensions to $G$, but we do not always have that $KH=G$. In particular, by Proposition \ref{basics}(e), this happens for saturated formations such that $\mathfrak{N} \subseteq \mathfrak{F}$. In this case, still under the hypotheses of Theorem \ref{3.5 original}, we have that $K/L$ is abelian, $KH=G$ and $K\cap LH=L$.

Whenever $K$ and $L$ are normal subgroups of $G$ such that $K/L$ is abelian, $KH=G$ and $K \cap LH=L$ where $H$ is an $\mathfrak{F}$-projector of $G$, we will say that $(G,K,L)$ satisfies the \textit{Navarro condition} with respect to $\mathfrak{F}$. Note that, for instance, if $\mathfrak{N} \subseteq \mathfrak{F}$,  $K=G^{\mathfrak{F}}$ and $L=K'$, then $(G,K,L)$ satisfies the Navarro condition with respect to $\mathfrak{F}$ by Remark \ref{caja}.
Now, Theorem \ref{3.5 original} can be formulated as follows.

\begin{thm} \label{key}
 Let $\mathfrak{F}$ be a saturated formation with $\mathfrak{N}\sbs \mathfrak{F}$. Suppose $(G,K,L)$ satisfies the Navarro condition with respect to $\mathfrak{F}$. We have the following.
\begin{itemize}
\item[(a)]
If $\theta \in \irrh K$, then there exists a unique $\varphi$  in $\irrh L$ under $\theta$.
\item[(b)] If $\varphi \in \irrh L$, then there exists a unique $\theta$ in $\irrh K$ over $\varphi$.
\end{itemize}
Now fix $\varphi$ and $\theta$ as before.
\begin{itemize}
\item[(c)] We have that  $\theta$ extends to $G$ if and only if $\varphi$ extends to $LH$.
\end{itemize}
\end{thm}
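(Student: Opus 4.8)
The plan is to derive this statement directly from Theorem \ref{3.5 original} (Navarro's Theorem 3.5), whose conclusions (a) and (b) are already stated there verbatim. So the only real work is to verify that the Navarro condition, under the extra hypothesis $\mathfrak{N} \sbs \mathfrak{F}$, implies the hypotheses of Theorem \ref{3.5 original}, and then to reinterpret its part (c) in light of the equality $KH = G$.

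First I would record the one structural consequence of $\mathfrak{N} \sbs \mathfrak{F}$ that makes everything collapse: by Proposition \ref{basics}(e) applied to $U = H$, the projector $H$ is self-normalizing, so $\norm G H = H$. Assuming now that $(G,K,L)$ satisfies the Navarro condition, we have $K, L \nor G$, $K/L$ abelian, $KH = G$ and $K \cap LH = L$. The hypothesis $KH \trianglelefteq G$ of Theorem \ref{3.5 original} then holds trivially, since $KH = G$. For the remaining hypothesis, I would use $\norm G H = H$ together with the fact that $LH = HL$ (as $L \nor G$) to compute
$$K \cap \norm G H L = K \cap HL = K \cap LH = L,$$
which is exactly the condition $K \cap \norm G H L = L$. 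Hence all hypotheses of Theorem \ref{3.5 original} are met.

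With the hypotheses verified, parts (a) and (b) follow immediately from Theorem \ref{3.5 original}(a) and (b). For part (c), Theorem \ref{3.5 original}(c) asserts that $\theta$ extends to $KH$ if and only if $\varphi$ extends to $LH$; since $KH = G$, extending to $KH$ is literally extending to $G$, which is precisely the claimed equivalence. I do not anticipate a genuine obstacle, as all the substantive content lives in Theorem \ref{3.5 original}; the only points demanding any care are the two identifications $\norm G H = H$ and $KH = G$, which are exactly what the assumption $\mathfrak{N} \sbs \mathfrak{F}$ together with the Navarro condition supply.
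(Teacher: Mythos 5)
Your proposal is correct and matches the paper's own treatment: the paper states Theorem \ref{key} without a separate proof, presenting it precisely as a reformulation of Theorem \ref{3.5 original} once $\mathfrak{N}\sbs\mathfrak{F}$ forces $\norm G H = H$ (Proposition \ref{basics}(e)), so that $KH=G\nor G$ and $K\cap\norm G H L = K\cap LH = L$. Your write-up simply makes explicit the verification of Navarro's hypotheses that the paper leaves implicit, which is the same argument.
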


We are interested in studying whether there is any relationship between the two sets of $H$-invariant character extensions that intervene in the previous theorem.
The result below was generously communicated to us by G. Navarro and P. H. Tiep (see Theorem 5.2 of \cite{NT}). We are thankful to them for allowing us to present it here.

\begin{thm}[Navarro, Tiep] \label{malle&navarro&tiep}
Suppose $G$ is a finite solvable group, and let $K,L\nor G $ such that $K/L$ is abelian. Suppose  that $H/L$ is a self-normalizing nilpotent subgroup of $G/L$, and  suppose that $G= KH$ and $K\cap H=L$. Let $\theta \in \irrh L$ and let $\varphi \in \irrh K$ lying over $\theta$. Then the following hold.
\begin{itemize}
\item[(a)] If $\chi\in \irr G$ extends $\theta$, then there is an irreducible constituent of $\chi_H$ extending $\varphi$. 
\item[(b)] If $\nu\in \irr H$ extends $\varphi$, then there is an irreducible constituent of $\nu^G$ extending  $\theta$.
\end{itemize}
\end{thm}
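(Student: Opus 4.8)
The plan is to realise both parts as consequences of the restriction correspondence in Lemma~\ref{restriccionbiyeccion}, once $\varphi$ has been recognised as a $G$-invariant extension of $\theta$. The decisive preliminary observation is that, in both statements, $\theta$ and $\varphi$ are $G$-invariant and $\varphi(1)=\theta(1)$. Indeed, the hypotheses ``$\chi$ extends $\theta$'' in (a) and ``$\nu$ extends $\varphi$'' in (b) force $\varphi(1)=\theta(1)$, hence $\varphi_L=\theta$. Now $\varphi\in\irr K$ is fixed by the inner automorphisms of $K$, so it is $K$-invariant; it is $H$-invariant by hypothesis; and since $G=KH$ it follows that $\varphi$ is $G$-invariant. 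Consequently $\theta=\varphi_L$ is $G$-invariant as well, $L$ being normal in $G$. This is precisely what makes Isaacs' machinery applicable, even though $G$-invariance of $\theta$ is not assumed at the outset.

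With this in hand I would invoke Lemma~\ref{restriccionbiyeccion} with $N=K$, the complement $H$, intersection $K\cap H=L$, the $G$-invariant character $\varphi\in\irr K$ and $\varphi_L=\theta\in\irr L$. This produces a bijection $\irr{G\mid\varphi}\to\irr{H\mid\theta}$ given by restriction $\chi\mapsto\chi_H$. Because restriction leaves the value at $1$ unchanged, $\chi(1)=\chi_H(1)$ for every $\chi\in\irr{G\mid\varphi}$; as $\varphi(1)=\theta(1)$, the bijection carries the extensions of $\theta$ to $G$ (the members of $\irr{G\mid\varphi}$ of degree $\theta(1)$) onto the extensions of $\theta$ to $H$.

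The two assertions then follow quickly. For (a): since $\chi$ extends $\theta$ we have $\chi(1)=\theta(1)$, so $\chi_K$ is a $G$-invariant extension of $\theta$ to $K$; by the uniqueness of the $H$-invariant character of $K$ over $\theta$ (Theorem~\ref{key}(b)) this forces $\chi_K=\varphi$, that is, $\chi\in\irr{G\mid\varphi}$. Hence $\chi_H\in\irr{H\mid\theta}$ is irreducible of degree $\theta(1)$, so $\chi_H$ is itself the required irreducible constituent of $\chi_H$, and it is the extension of $\theta$ matched with $\varphi$. For (b): the given $\nu$ is an extension of $\theta$ to $H$, so by surjectivity of the bijection there is $\chi\in\irr{G\mid\varphi}$ with $\chi_H=\nu$; then $\chi(1)=\nu(1)=\theta(1)$, which forces $\chi_K=\varphi$ and $\chi_L=\theta$, so $\chi$ extends $\theta$. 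Finally Frobenius reciprocity gives $[\nu^G,\chi]=[\nu,\chi_H]=1$, so $\chi$ is an irreducible constituent of $\nu^G$ extending $\theta$.

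The main obstacle is the appeal to Theorem~\ref{key}(b) in part (a): its hypotheses require $H$ to be a Carter subgroup of $G$, whereas here only $H/L$ is known to be self-normalizing and nilpotent in $G/L$. To bridge this I would first note that $\norm G H=H$ (any element normalizing $H$ projects into $\norm{G/L}{H/L}=H/L$), and then replace $(G,L,\theta)$ by an isomorphic character triple in which $L$ is central; there $L\le\zent H$ makes $H$ genuinely nilpotent and self-normalizing, hence a Carter subgroup, so $(G,K,L)$ satisfies the Navarro condition with respect to $\mathfrak N$ and Theorem~\ref{key} applies. Since character-triple isomorphisms preserve $G$-invariance, extendibility, degrees relative to $\theta$, constituents of restriction and induction, and inner products, the conclusions transfer back unchanged. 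Part (b), by contrast, needs only the $G$-invariance of $\varphi$ established above, Lemma~\ref{restriccionbiyeccion}, and Frobenius reciprocity.
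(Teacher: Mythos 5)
The first thing to say is that the paper contains no proof of this statement: it is presented as a result communicated by Navarro and Tiep and is cited as Theorem 5.2 of \cite{NT}. So your attempt cannot be compared with an internal argument; it can only be judged as a substitute proof, and as such it has a genuine gap.

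The printed statement has the symbols $\theta$ and $\varphi$ interchanged in its two clauses: since $K \cap H = L$ and $K \not\subseteq H$, a constituent of $\chi_H$ cannot ``extend $\varphi \in \irr K$'', and $\nu \in \irr H$ cannot ``extend $\varphi$'' either. The intended statement --- the one actually invoked in the proof of Lemma~\ref{fully} --- reads: (a) if $\chi \in \irr G$ extends $\varphi$ (the character of $K$), then some irreducible constituent of $\chi_H$ extends $\theta$ (the character of $L \leq H$); (b) if $\nu \in \irr H$ extends $\theta$, then some irreducible constituent of $\nu^G$ extends $\varphi$. Your proof is built entirely on the literal, ill-typed reading: from it you extract $\varphi(1)=\theta(1)$, i.e.\ $\varphi_L=\theta$, and this assumption trivializes the theorem. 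Under the correct reading nothing forces $\varphi$ to restrict irreducibly to $L$; it is merely an $H$-invariant character of $K$ over $\theta$, and in the one place the paper uses this theorem (Lemma~\ref{fully}) $\varphi$ is \emph{fully ramified} over $\theta$, so that $\varphi(1)=|K:L|^{1/2}\,\theta(1)>\theta(1)$. In that case your central tool, Lemma~\ref{restriccionbiyeccion}, is inapplicable (it requires $\varphi_L$ to be irreducible), $\chi_H$ need not be irreducible, and there is no restriction bijection matching extensions above. The fully ramified case is precisely the content of the theorem, and reaching it requires the theory of character fives and canonical characters of \cite{brown} (which is what the paper deploys in Lemma~\ref{fully} when Navarro--Tiep cannot be quoted), or the arguments of \cite{NT} itself; your elementary Clifford-theoretic argument cannot produce it. Note also that your claim that the hypothesis of (b) ``forces $\varphi(1)=\theta(1)$'' rests on a reading of ``$\nu$ extends $\varphi$'' that does not parse at all.

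Two smaller remarks. You are right that the appeal to Theorem~\ref{key}(b) is illegitimate as stated ($H$ is not an $\mathfrak{F}$-projector of $G$), and your character-triple repair is plausible; in fact uniqueness of the $H$-invariant extension can be seen directly: if $\lambda \in \irr{K/L}$ is $H$-invariant, it extends to a linear character of $G/L$ trivial on $H/L$, and since the Carter subgroup $H/L$ covers the abelianization of $G/L$, this extension and hence $\lambda$ are trivial. But this fix only consolidates the degenerate case $\varphi_L=\theta$ that your argument covers; it does not touch the missing fully ramified case, which is where the theorem actually lives.
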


In the proof of the following lemma, we will use Isaacs' notation from \cite{brown}.

\begin{lemma}\label{fully}
Let $\mathfrak{F}$ be a saturated formation with $\mathfrak{N} \subseteq \mathfrak{F}$. Suppose that $K$ and $L$ are normal subgroups of a solvable group $G$ such that $K/L$ is a chief factor of $G$, $K \cap LH = L$, and $G = KH$, where $H$ is an $\mathfrak{F}$-projector of $G$.  Moreover, assume that $G$ has odd order whenever $\mathfrak{N} \neq \mathfrak{F}$. Let $\varphi \in \irrh {L}$ be fully ramified with respect to $K/L$, and let $\theta \in \mathrm{Irr}(K)$ be the unique $H$-invariant irreducible character lying over $\varphi$. 
\begin{itemize}
    \item[(a)] If $\eta$ is an irreducible extension of $\varphi$ to $LH$, then there exists an extension $\chi \in \mathrm{Irr}(G)$ of $\theta$ lying over $\eta$.
    \item[(b)] If $\chi$ is an irreducible extension of $\theta$ to $G$, then there exists an extension $\eta \in \mathrm{Irr}(LH)$ of $\varphi$ lying under $\chi$.
\end{itemize}
\end{lemma}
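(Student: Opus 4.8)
The plan is to recognize that we are exactly in the situation of a fully ramified section equipped with a complement, and then to invoke the character correspondence attached to it. Since $\varphi$ is fully ramified in $K/L$ it is $K$-invariant, and being also $H$-invariant with $G=KH$ it is $G$-invariant; the unique irreducible character of $K$ over it is $\theta$, with $\theta_L=e\varphi$ and $e^2=|K:L|$. Moreover $LH/L$ is a complement to $K/L$ in $G/L$, because $G=KH$ gives $G=K\cdot LH$ while the hypothesis $K\cap LH=L$ gives trivial intersection modulo $L$. I would then use the correspondence associated to the fully ramified section $K/L$ relative to this complement, in the notation of \cite{brown}: a bijection $\irr{LH\mid\varphi}\to\irr{G\mid\varphi}$, written $\beta\mapsto\beta^{*}$, with $\beta^{*}(1)=e\,\beta(1)$ and with $\beta$ a constituent of $(\beta^{*})_{LH}$.

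Granting such a correspondence, both parts fall out immediately. For (a), apply it to the given extension $\eta$ of $\varphi$: the image $\chi:=\eta^{*}$ has degree $e\varphi(1)=\theta(1)$ and lies over $\varphi$, hence over $\theta$ (the unique irreducible of $K$ over $\varphi$); since $\theta$ is $G$-invariant and $\chi(1)=\theta(1)$, the restriction $\chi_K$ must be exactly $\theta$, so $\chi$ is an extension of $\theta$, and by construction $\eta$ is a constituent of $\chi_{LH}$, i.e.\ $\chi$ lies over $\eta$. For (b), apply the inverse correspondence to the given extension $\chi$ of $\theta$: the preimage $\eta$ has degree $\chi(1)/e=\varphi(1)$ and lies over $\varphi$, so it is an extension of $\varphi$, and again $\eta$ is a constituent of $\chi_{LH}$, so $\eta$ lies under $\chi$.

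The real work, and the only place the remaining hypotheses intervene, is establishing this correspondence with its degree and lying-over properties. The hypothesis that $K/L$ is a chief factor ensures it is an elementary abelian $p$-group and an irreducible $G$-module, while the fully ramified hypothesis makes the associated $G$-invariant alternating form nondegenerate, so that $K/L$ is a symplectic $\F_p$-space and the symplectic machinery applies. If $\mathfrak{F}=\mathfrak{N}$, the complement $LH/L$ is self-normalizing and nilpotent, forcing $\cent{K/L}{LH/L}=1$; since $K/L$ is an irreducible $LH/L$-module this makes $\oh{p}{LH/L}$ act trivially and reduces the action to the coprime $p'$-group $\oh{p'}{LH/L}$, so the correspondence exists by the classical coprime theory with no parity assumption. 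If $\mathfrak{N}\subsetneq\mathfrak{F}$, the action of $LH/L$ on $K/L$ need not be coprime and one must invoke the symplectic (Weil) correspondence, where the oddness of $|G|$, equivalently of $p$, is precisely what removes the metaplectic obstruction. Checking that this symplectic correspondence transports the lying-over relation in the non-coprime odd case is the main obstacle.
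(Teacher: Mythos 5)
Your reduction of parts (a) and (b) to the existence of a correspondence $\irr{LH\mid\varphi}\to\irr{G\mid\theta}$ that multiplies degrees by $e$ and preserves the lying-over relation is correct, and your identification of Isaacs' fully ramified machinery from \cite{brown} as the relevant tool matches the paper. However, the step you defer as ``the main obstacle'' --- that the correspondence transports the lying-over relation --- is not a technical loose end: it is the entire content of the lemma, and you do not prove it. The paper closes exactly this gap as follows: by Theorem 9.1(c) of \cite{brown}, each irreducible extension $\eta$ of $\varphi$ to $LH$ determines a unique $\chi\in\irr{G\mid\theta}$ with $\chi_{LH}=\psi_{LH}\,\eta$, where $\psi$ is the canonical character attached to the character five $(G,K,L,\theta,\varphi)$; and by Corollary 5.9 of \cite{brown} the trivial character is a constituent of $\psi$, whence $\eta$ is a constituent of $\psi_{LH}\eta=\chi_{LH}$, and comparing degrees (using $\psi(1)^2=|K/L|$) gives $\chi_K=\theta$. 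This is precisely where odd order enters, and it cannot be argued away: the paper's example $G=\texttt{SmallGroup(48,28)}$ with $\mathfrak{F}=\mathfrak{U}$ shows that in even order the relevant extensions exist but \emph{no} extension of $\theta$ to $G$ lies over an extension of $\varphi$ to $LH$. So any proof that stops at ``the symplectic correspondence should respect lying-over'' has a genuine gap, and one that is sensitive to the parity hypothesis.

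Your treatment of the case $\mathfrak{F}=\mathfrak{N}$ also diverges from the paper, and it too is incomplete. Your observations that $\cent{K/L}{LH/L}=1$ (from self-normalization of $LH$) and that irreducibility of $K/L$ forces $\oh{p}{LH/L}$ to act trivially are correct, but they do not place you within ``classical coprime theory'': those correspondence theorems require the complement itself to be coprime to the section, i.e. $(|K/L|,|LH/L|)=1$, whereas here $\oh{p}{LH/L}$ may well be nontrivial --- only the \emph{action} factors through a $p'$-group. One can deduce that $G/L=\bigl(K/L\rtimes \oh{p'}{LH/L}\bigr)\times\oh{p}{LH/L}$, but since $\varphi$ need not be linear, transferring a correspondence built over the coprime factor back to the characters of $G$ lying over $\varphi$ requires an argument you do not supply. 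The paper avoids this entirely by invoking the Navarro--Tiep result (Theorem \ref{malle&navarro&tiep}) with $LH/L$ playing the role of the self-normalizing nilpotent subgroup of $G/L$, which needs no parity or coprimality hypothesis; if you want to keep your coprime-reduction route, that transfer step is what you would have to write out.
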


\begin{proof}
 If $\mathfrak{F} =\mathfrak{N}$, then an $\mathfrak{F}$-projector is a Carter subgroup $C$ of $G$, and the result follows from Theorem~\ref{malle&navarro&tiep}, considering $CL/L$ as the self-normalizing nilpotent subgroup of $G/L$. We may thus assume that $\mathfrak{N} \subseteq \mathfrak{F}$ and that $G$ has odd order.

We have that $(G,K,L,\theta,\varphi)$ is a character five, as described in Section 3 of \cite{brown}. By applying Theorem \ref{residual} to $G/L$, we deduce that any complement of $K/L$ in $G$ is conjugate to $LH$. Let $\psi$ be the canonical character of $G$ defined in \cite{brown}. We know, by Corollary 5.9 of \cite{brown}, that $\psi$ contains  the trivial character $1_G$ as an irreducible constituent. Now,  let  $\eta$ be  an irreducible extension of $\varphi$ to $LH$; then, by Theorem 9.1~(c) of \cite{brown},   there is a unique $\chi \in \irr G$ over $\theta$ such that $$\chi_{LH}= \psi_{LH} \ \eta,$$ and hence $\chi$ lies over $\eta$. By comparing degrees (taking into account that $\psi(1)^2=|K/L|$), we conclude that $\chi_K=\theta$. Similarly, one can show that if $\chi$ is an irreducible extension of $\theta$ to $G$, then $\chi_{LH}$ has a constituent $\eta \in \irr{LH\mid \varphi}$, which is an extension of $\varphi$.
\end{proof}

We prove that the $H$-invariant extensions of irreducible characters from $K$ to $KH$, as given in Theorem~\ref{key}, lie over the $H$-invariant extensions from $L$ to $LH$, provided that $G$ has odd order or that $\mathfrak{F}$ is the formation of nilpotent groups.

\medskip

Note that the hypothesis that $|G|$ is odd is necessary in Lemma \ref{fully}, and therefore in the result below, as shown by considering the group $G=\texttt{SmallGroup(48,28)}$, the saturated formation $\mathfrak{U}$ of supersolvable groups, $K=G^\mathfrak{U}$ the $\mathfrak{U}$-residual of $G$ and $L=K'$ the derived subgroup of $K$. Let as usual $H$ be a $\mathfrak{U}$-projector of $G$. In this case, the only nonlinear character $\theta$ of $K \cong Q_8$ lies over the nontrivial character $\varphi$ of $L \cong C_2$ and they extend to $G$ and $LH$ respectively (thus are both $H$-invariant). However, no extension of $\theta$ to $G$ lies over an extension of $\varphi$ to $LH$.

\begin{thm}\label{cor1}
Let $\mathfrak{F}$ be a saturated formation with $\mathfrak{N} \subseteq \mathfrak{F}$, let $G$ be a solvable group, and let $H$ be an $\mathfrak{F}$-projector of $G$. Suppose that the triple $(G, K, L)$ satisfies the Navarro condition with respect to $\mathfrak{F}$. Further assume that $G$ has odd order whenever $\mathfrak{N} \neq \mathfrak{F}$.  Let $\varphi\in\irrh L$, and let $\theta\in\irrh {K}$ be the unique  $H$-invariant irreducible  character  over $\varphi$. 
\begin{itemize}
\item[(a)] If $\eta $ is an irreducible extension of $\varphi$ to $LH$, then there exists an extension  $\chi \in \irr G$ of $\theta$ lying over $\eta$. 
\item[(b)] If  $\chi$ is  an irreducible extension of $\theta$, then there exists an  extension $\eta \in \irr{LH}$ of $\varphi$ lying under $\chi$.
\end{itemize}
\end{thm}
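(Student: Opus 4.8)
The plan is to reduce the general statement of Theorem~\ref{cor1} to the fully ramified case handled by Lemma~\ref{fully}, by passing through a chief series of $G$ lying between $L$ and $K$. The key observation is that the Navarro condition gives us $K/L$ abelian, $G=KH$ and $K\cap LH=L$, but $K/L$ need not be a chief factor; Lemma~\ref{fully} requires it to be. So the first step is to show that it suffices to treat the case where $K/L$ is a chief factor and moreover $\varphi$ is fully ramified with respect to $K/L$. I would set this up by induction on $|K:L|$.

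For the inductive step, I would choose a normal subgroup $M$ of $G$ with $L\subseteq M\subseteq K$ such that $K/M$ is a chief factor of $G$ (such $M$ exists since $K/L$ is an abelian chief-factor-refinable section). One then checks that both triples $(G,M,L)$ and $(G,K,M)$ again satisfy the Navarro condition with respect to $\mathfrak{F}$: indeed $M/L$ and $K/M$ are abelian, $G=KH\supseteq MH$ forces $G=MH$ once one verifies $K=M(K\cap H)\cdot\ldots$ — more carefully, from $K\cap LH=L$ and Lemma~\ref{hup} (applied to the normal subgroups $M$ and $LH$-related data) one extracts $M\cap LH=L$ and $K\cap MH=M$, and $G=MH$ follows because $MH\supseteq LH$ has the right index. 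I would then let $\psi\in\irrh M$ be the unique $H$-invariant character of $M$ under $\theta$ and over $\varphi$, whose existence and uniqueness is guaranteed by Theorem~\ref{key}(a),(b) applied to the appropriate triples. Now I run part (a) in two stages: given an extension $\eta$ of $\varphi$ to $LH$, the inductive hypothesis applied to $(G,M,L)$ produces an extension of $\psi$ to $MH$ lying over $\eta$, but this is only an intermediate object; I actually want to descend through $G\supseteq KH=G$, so the correct framing is to apply the chief-factor case to the top layer $(G,K,M)$ and the inductive hypothesis to the lower layer, composing the two extension statements. Symmetrically for part (b), I restrict a given extension $\chi$ of $\theta$ down to $KH=G$ (trivial here), then to the relevant intermediate group, extracting constituents at each stage.

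The two base cases are then: when $\varphi$ is fully ramified with respect to the chief factor $K/L$, which is exactly Lemma~\ref{fully}; and when $\varphi$ extends to $K$ (equivalently, the ramification is trivial and $\theta_L=\varphi$ or $\theta=\lambda\hat\varphi$ for a canonical extension). In the non-fully-ramified chief-factor case, $\theta$ lies over $\varphi$ with $\theta_L=e\varphi$ where $e^2$ divides $|K:L|$; since $K/L$ is a chief factor and abelian, the standard dichotomy (see the theory of character fives / Clifford theory over abelian chief factors) says $\varphi$ is either fully ramified or extends to $K$. In the latter situation the correspondence is transparent: extensions of $\varphi$ to $LH$ and extensions of $\theta$ to $G$ are matched by multiplication by linear characters of $G/K\cong LH/L$, and the "lying over/under" relations are immediate from Gallagher's theorem, using $G=KH$ and $K\cap LH=L$.

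The main obstacle I anticipate is verifying that the Navarro condition is inherited by both sublayers $(G,M,L)$ and $(G,K,M)$ and that the canonical intermediate character $\psi$ interacts correctly with both the upper and lower correspondences — in particular, confirming that the extension of $\psi$ to $MH$ produced in the lower layer is the \emph{same} one that the upper-layer application sees lying underneath. This compatibility is where Clifford-theoretic bookkeeping can go wrong, and I would lean on the uniqueness clauses of Theorem~\ref{key} to pin down $\psi$ and on Theorem~\ref{malle&navarro&tiep} to transport the "constituent extending" relation through each single chief factor. The odd-order hypothesis is used only to invoke Lemma~\ref{fully} in the fully ramified case, so it propagates through the induction without extra work.
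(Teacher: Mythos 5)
Your overall strategy---induct on $|K:L|$ to reduce to a chief factor $K/L$, then split into cases and quote Lemma~\ref{fully} for the fully ramified case---is the same as the paper's, and your two-layer composition through an intermediate normal subgroup $M$ (pinning down the intermediate character via the uniqueness clauses of Theorem~\ref{key}) is exactly how the paper runs its induction. But there is a genuine gap in your case analysis for the chief-factor case: you claim that if $\varphi$ is not fully ramified with respect to $K/L$ then $\varphi$ extends to $K$, invoking ``the standard dichotomy.'' The going-down theorem is a trichotomy, and the extension/fully-ramified alternative only holds when $\varphi$ is invariant in $K$. Nothing in the hypotheses gives you $K$-invariance: $\varphi$ is only assumed $H$-invariant, and since $G=KH$ it can perfectly well happen that $I_G(\varphi)=LH<G$, in which case $\theta=\varphi^K$ is induced and $\theta_L$ is a sum of $|K:L|$ distinct conjugates of $\varphi$; your formula $\theta_L=e\varphi$ already presupposes the invariance you never established. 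The paper disposes of this case first: when $\varphi$ is not $G$-invariant, one shows $I_K(\varphi)=L$ (because $I_K(\varphi)$ is $H$-invariant and normal in $K$, and $K/L$ is a chief factor of $G$), so induction of characters gives a bijection $\irr{LH\mid\varphi}\rightarrow\irr{G\mid\theta}$ by Lemma~\ref{induccionbiyeccion}, and (a), (b) follow by degree counting. Without this case your induction is incomplete.

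A second, reparable, problem is your verification of the Navarro condition for the lower layer. You assert that $(G,M,L)$ satisfies the Navarro condition and that ``$G=MH$ follows.'' This is internally inconsistent: you also (correctly) claim $K\cap MH=M$, and together with $G=MH$ this would force $K=K\cap G=K\cap MH=M$, contradicting $M<K$. In fact $G=MH$ is false whenever $M<K$, since $K\cap H\subseteq K\cap LH=L\subseteq M$. The triple you actually need for the lower layer is $(MH,M,L)$---with ambient group $MH$, in which $H$ is still an $\mathfrak{F}$-projector by Proposition~\ref{basics}(a)---and your own phrasing (``an extension of $\psi$ to $MH$'') shows this is what you intend; the induction on $|K:L|$ still applies since $|M:L|<|K:L|$. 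So this second issue is fixed by restating the triple correctly, but the missing non-invariant (induced) case above is a real gap that needs the extra argument.
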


\begin{proof}
Working by induction on $|K : L|$ and $|G : K|$, we can assume that $K/L$ is a chief factor of $G$. Otherwise, let $K/M$ be a chief factor of $G$, and $\mu \in \text{Irr}_H(M)$ be the unique $H$-invariant irreducible character under  $\theta$ and over $\varphi$ (this follows from Theorem \ref{key}). 

By induction, we have that if $\eta$ is an irreducible extension of $\varphi$ to $LH$, then there exists  an irreducible  extension $\gamma$  of the character $\mu$ to $HM$ lying over $\eta$. 
Then, again by induction, we have that there is  an irreducible extension $\chi$ of the character $\theta$ to $G$ lying over $\gamma$. Hence, $\chi$ lies over $\eta$ and (a) is proved. 

Conversely, if $\chi$  is an  irreducible extension of $\theta$ to $G$, then, by induction, there exists  an irreducible extension $\gamma$  of the character $\mu$ to $HM$ lying under $\chi$. Then, again by induction, we have that there  is an extension  $\eta$ of the character  $\varphi$ to $LH$ lying under $\eta$. Therefore, $\eta$ lies under $\chi$ and we have (b).

We may also assume that $\varphi$ is $G$-invariant. Otherwise, since the inertia subgroup $ I_G(\varphi) = LH$ is normal in $G$ and $K/L$ is a chief factor of $G$, we have that $I_K(\varphi) = L$. Then, by Lemma  \ref{induccionbiyeccion}, induction defines a bijection  from $\text{Irr}(LH \mid \varphi) $ onto $\text{Irr}(G \mid \theta)$, and (a) and (b) follow.

Now, by  the ``going down" theorem, we have that either $\theta$ is fully ramified with respect to $K/L$ or $\theta$ is an irreducible extension of $\varphi$ to $K$. In the latter case, we have that the restriction defines a bijection $\irr{G \mid \theta} \rightarrow \irr{K \mid \varphi}$ by Lemma \ref{restriccionbiyeccion}, and the result follows.

So, we may assume that $\theta$ is fully ramified with respect to $K/L$.  Therefore the result holds by Lemma \ref{fully}.
\end{proof}
\section{$\mathfrak{F}'$-characters}

In this section, we present the $\mathfrak{F}'$-characters defined in~\cite{N}, focusing on the specific case where the saturated formation $\mathfrak{F}$ contains $\mathfrak{N}$.

Recall that saturated formations containing the nilpotent groups have the property that their projectors in solvable groups are self-normalizing. Thus, in this setting, if $G$ is a solvable group and $H$ is an $\mathfrak{F}$-projector of $G$, then any normal subgroup $N \unlhd G$ such that $NH \nor G$ must satisfy $NH = G$ (see Proposition~\ref{basics}).

The construction in \cite{N} is based on Navarro's definition of $G^{\mathfrak{F}_n}$, the unique smallest normal subgroup of $G$ such that $G^{\mathfrak{F}_n}H \unlhd G$. From the discussion above, in our context we have $G^{\mathfrak{F}_n} = G^{\mathfrak{F}}$, the $\mathfrak{F}$-residual of $G$. Since $G$ is solvable and $\mathfrak{N} \subseteq \mathfrak{F}$, we have that $G^\mathfrak{F}<G$ when $G>1$.

If $G \in \mathfrak{F}$, then the $\mathfrak{F}'$-characters are defined to be precisely the linear characters of $G$. Assume now that $G \notin \mathfrak{F}$. We introduce the notation that will be used throughout the remainder of the paper.

Define $K_0 = G^{\mathfrak{F}}$ and $L_0 = K_0'$. Now, let  $K_i = (L_{i-1}H)^{\mathfrak{F}}$, and $L_i = K_i'$ for $i \geq 1$. We have that $K_iH=L_{i-1}H$ and $K_i\leq L_{i-1}$ for all $i\geq 1$, and that $(K_iH, K_i, L_i)$ satisfies the  Navarro condition with respect to $\mathfrak{F}$ for all $i\geq 0$. There exists a non-negative integer $m$ such that
$$
G = K_0H > L_0H>\cdots > K_{m-1}H> L_{m-1}H = H.
$$
At this point, we reach $K_m = (L_{m-1}H)^{\mathfrak{F}} = H^{\mathfrak{F}} = 1$.

Therefore, we obtain the following $H$-invariant subnormal series
$$
G \rhd K_0 \rhd L_0 \rhd K_1 \rhd L_1  \rhd \cdots\rhd K_{m-1} \rhd L_{m-1} \rhd K_m=1.
$$

Navarro constructs the $\mathfrak{F}'$-characters of $G$ through an ascending process that uses the tuples $(L_{i-1}\norm G H, K_i,L_i,L_iH,L_i\norm G H)$. In the case $\mathfrak{N} \subseteq \mathfrak{F}$, $H=\norm G H$ and the last two entries of the tuples coincide. Let us describe the process of construction of the $\mathfrak{F}'$-characters in this case. We denote the set of these characters by $\irrf G$.

Let $\Delta$ be a subset of $\irr{N}$ for some normal subgroup $N$ of $G$. We define
$$\irr{G \mid \Delta} = \bigcup_{\lambda \in \Delta} \irr{G \mid \lambda},$$
where $\irr{G \mid \lambda}$ denotes the set of irreducible characters of $G$ lying over the irreducible character $\lambda$.

As indicated earlier, we begin with $ K_{m}H=H$, so $\irrf{H} = \text{Lin}(H)$, the set of linear characters of $H$.

Define
$$
\Delta_{m-1} = \{ \chi_{L_{m-1}} \mid \chi \in \irrf{H} \} \subseteq \irr{L_{m-1}},
$$
and construct
$$
\irrf{L_{m-2}H} = \{ \chi \in \irr{L_{m-2}H \mid \Delta_{m-1}} \mid \chi_{K_{m-1}} \in \irr{K_{m-1}} \}.
$$
Proceeding recursively, define
$$
\Delta_{m-2} = \{ \chi_{L_{m-2}} \mid \chi \in \irrf{L_{m-2}H} \},
$$
and
$$
\irrf{L_{m-3}H} = \{ \chi \in \irr{L_{m-3}H \mid \Delta_{m-2}} \mid \chi_{K_{m-2}} \in \irr{K_{m-2}} \}.
$$
This process continues until we reach
$$
\Delta_0 = \{ \chi_{L_0} \mid \chi \in \irrf{L_0H} \},
$$
and finally
$$
\irrf{G} = \irrf{K_0H} = \{ \chi \in \irr{G \mid \Delta_0} \mid \chi_{K_0} \in \irr{K_0} \}.
$$

We introduce the notation $(N, \varphi) \triangleleft (M, \theta)$ to indicate that $N$ is a normal subgroup of $M$, and that the irreducible character $\theta$ of $M$ lies over the irreducible character $\varphi$ of $N$.

Note that since $K_i \leq L_{i-1}$ for all $1 \leq i \leq m$, if $\chi \in \irrf G$, there exist characters $\theta_i \in \irr{K_i}$ and $\varphi_i \in \irr{L_i}$, extending to $K_iH$ and $L_iH$, respectively, such that 
$$
(1, 1) =(K_{m}, \theta_{m})\triangleleft (L_{m-1}, \varphi_{m-1}) \triangleleft \cdots\triangleleft (K_{1}, \theta_{1}) \triangleleft (L_0, \varphi_0) \triangleleft (K_0, \theta_0) \triangleleft (G, \chi),
$$
with $(\varphi_i)_{K_{i+1}} = \theta_{i+1}$, for $0 \leq i\leq m-1$.

\smallskip

This allows us to consider a characterization of the $\mathfrak{F}'$-characters by means of a descending series, as follows.

\begin{thm} \label{caracterización descendente}
Suppose that $\mathfrak{F}$ is a saturated formation with $\mathfrak{N} \subseteq \mathfrak{F}$. Let $G$ be a solvable group, and let $H$ be an $\mathfrak{F}$-projector of $G$. Denote $K_0=G^\mathfrak{F}$ and $L_0=K_0'$. For $i \geq 1$, set $K_i=(L_{i-1}H)^\mathfrak{F}$ and $L_i=K_i'$. Then $\chi \in \irrf G$ if and only if the following conditions hold.
\begin{description}
\item[(a)] The character $\chi_{K_0}$ is irreducible. We denote $\theta_0 = \chi_{K_0}$.
\item[(b)] There exist characters $\varphi_i \in \irr{L_i}$ and $\theta_i \in \irr{K_i\mid \varphi_i}$, for $0 \leq i \leq m - 1$, extending to $L_iH$ and $K_iH$, respectively, such that
$$
(1, 1) = (K_{m}, \theta_{m}) \triangleleft (L_{m-1}, \varphi_{m-1}) \triangleleft \cdots \triangleleft (K_{1}, \theta_{1}) \triangleleft (L_0, \varphi_0) \triangleleft (K_0, \theta_0) \triangleleft (K_0H, \chi)=(G, \chi),
$$
and $(\varphi_i)_{K_{i+1}} = \theta_{i+1}$, for $0 \leq i\leq m-1$.
\end{description}
\end{thm}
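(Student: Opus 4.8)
The plan is to prove the stated equivalence by unrolling the recursive definition of $\irrf{G}$ one link at a time, inducting along the subnormal series. Throughout I abbreviate $M_j=K_jH=L_{j-1}H$ (so $M_0=G$ and $M_m=H$), so that the defining recursion reads $\irrf{M_j}=\{\chi\in\irr{M_j\mid\Delta_j}\mid\chi_{K_j}\in\irr{K_j}\}$, where $\Delta_j=\{\psi_{L_j}\mid\psi\in\irrf{M_{j+1}}\}$. I will prove the relative statement $\star_j$: a character $\chi\in\irr{M_j}$ lies in $\irrf{M_j}$ if and only if $\theta_j:=\chi_{K_j}$ is irreducible and there is a descending chain from $(K_j,\theta_j)$ down to $(K_m,\theta_m)=(1,1)$ with exactly the extension and compatibility properties listed in condition (b); the theorem is the case $j=0$. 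Since $H$ is an $\mathfrak F$-projector of each $M_{j+1}$ by Proposition \ref{basics}(a), and $(M_{j+1})^{\mathfrak F}=K_{j+1}$, the series attached to $M_{j+1}$ is precisely the tail of the series of $G$, so $\star_{j+1}$ is the assertion of the theorem for the smaller group $M_{j+1}$.

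I would first record two elementary facts used at every level. Because $L_j\sbs K_j$, we have $\chi_{L_j}=(\chi_{K_j})_{L_j}=(\theta_j)_{L_j}$; hence ``$\chi$ lies over $\varphi_j$'' is equivalent to ``$\theta_j$ lies over $\varphi_j$'', and membership in $\irrf{M_j}$ becomes: $\theta_j=\chi_{K_j}$ is irreducible and $\theta_j\in\irr{K_j\mid\varphi_j}$ for some $\varphi_j\in\Delta_j$. Next, since $K_{j+1}\sbs L_j$ and every $\psi\in\irrf{M_{j+1}}$ restricts irreducibly to $K_{j+1}=(M_{j+1})^{\mathfrak F}$, the intermediate restriction $\psi_{L_j}$ is forced to be irreducible; this justifies $\Delta_j\sbs\irr{L_j}$, shows that $\psi$ itself extends $\varphi_j:=\psi_{L_j}$ to $M_{j+1}=L_jH$, and gives the compatibility $(\varphi_j)_{K_{j+1}}=\psi_{K_{j+1}}=:\theta_{j+1}$.

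The induction runs on $j$ decreasing from $m$ to $0$. The anchor is the passage out of $M_m=H\in\mathfrak F$, where $\irrf{H}=\mathrm{Lin}(H)$: at level $j=m-1$ the compatibility $(\varphi_{m-1})_{K_m}=\theta_m=1$ forces $\varphi_{m-1}$ to have degree one, and because an extension preserves degree, any character of $H$ restricting to the linear $\varphi_{m-1}$ is again linear; thus ``$\varphi_{m-1}$ is linear and extends to $H$'' coincides with ``$\varphi_{m-1}\in\{\psi_{L_{m-1}}\mid\psi\in\mathrm{Lin}(H)\}=\Delta_{m-1}$'', which is the base case. For the inductive step I assume $\star_{j+1}$. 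In the forward direction, from $\chi\in\irrf{M_j}$ the reductions above yield $\theta_j=\chi_{K_j}$, a constituent $\varphi_j\in\Delta_j$, and a witness $\psi\in\irrf{M_{j+1}}$ with $\psi_{L_j}=\varphi_j$; applying $\star_{j+1}$ to $\psi$ produces the tail of the chain (levels $j+1$ through $m$), onto which I splice the new links $(K_{j+1},\theta_{j+1})\triangleleft(L_j,\varphi_j)\triangleleft(K_j,\theta_j)\triangleleft(M_j,\chi)$, whose ``lies over'' relations are immediate.

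The crux is the converse. Given the whole descending chain for $\chi$, I must produce an honest $\psi\in\irrf{M_{j+1}}$ with $\psi_{L_j}=\varphi_j$, since then $\varphi_j\in\Delta_j$ and $\chi$ (which lies over $\varphi_j$ and restricts irreducibly to $K_j$) belongs to $\irrf{M_j}$. The hypothesis that $\varphi_j$ extends to $M_{j+1}=L_jH$ lets me fix any extension $\psi_0\in\irr{M_{j+1}}$; every such extension satisfies $(\psi_0)_{K_{j+1}}=(\varphi_j)_{K_{j+1}}=\theta_{j+1}$, so $\psi_0$ restricts irreducibly to $K_{j+1}$ and lies over $\theta_{j+1}$. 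Feeding this, together with the given tail data $\{\theta_l,\varphi_l\}_{l\ge j+1}$ and their extension and compatibility properties, into $\star_{j+1}$ certifies $\psi_0\in\irrf{M_{j+1}}$, whence $\varphi_j=(\psi_0)_{L_j}\in\Delta_j$. I expect the main obstacle to be precisely this converse reconstruction: one must check that an arbitrarily chosen extension of $\varphi_j$ already meets every clause of $\star_{j+1}$, which rests on the degree-preserving nature of extension (guaranteeing $(\psi_0)_{K_{j+1}}=\theta_{j+1}$ independently of the choice of $\psi_0$) and on careful bookkeeping of the alternating $K_i$/$L_i$ chain.
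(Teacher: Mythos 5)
Your proposal is correct and takes essentially the same route as the paper: the paper also proves the converse by induction (on $|G:H|$, which matches your downward induction on the level $j$), choosing an arbitrary extension $\eta$ of $\varphi_0$ to $L_0H$, observing that $\eta_{K_1}=(\varphi_0)_{K_1}=\theta_1$ so that conditions (a) and (b) hold for $\eta$ in $L_0H$, concluding $\eta\in\irrf{L_0H}$ by induction, hence $\varphi_0\in\Delta_0$ and $\chi\in\irrf{G}$. The forward direction in the paper is likewise just the unrolling of the recursive construction (cited from the remarks preceding the theorem), which is what your splicing argument makes explicit.
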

\begin{proof}

If $\chi \in \irrf G$, then conditions (a) and (b) follow from the comments preceding this theorem.

We now prove the converse by induction on $|G:H|$.
Assume that (a) and (b) hold for an irreducible character $\chi$ of $G$.  

If $G$ is an $\mathfrak{F}$-group, \textit{i.e.}, if $G=H$, then $K_0 = 1$, and by (a), it follows that $\chi$ is a linear character. Hence, $\chi$ is an $\mathfrak{F}'$-character of $G$.

We may therefore assume that $G \notin \mathfrak{F}$; from this,  it follows that  $L_0H$ is a proper subgroup  of  $G$.   Let $\eta$ be an extension of $\theta_1$ to $K_1H = L_0H$ such that $\eta_{L_0} = \varphi_0$. Then we have
$$
(1, 1) = (K_m, \theta_m) \triangleleft (L_{m-1}, \varphi_{m-1}) \triangleleft \cdots \triangleleft (K_1, \theta_1) \triangleleft (L_0, \varphi_0) \triangleleft (L_0H, \eta),
$$

\noindent
and therefore, conditions (a) and (b) hold for $\eta$. 

By the inductive hypothesis, we have $\eta \in \irrf{L_0H}$. Hence,  $\varphi_0 \in \Delta_0= \{ \psi_{L_0} \mid \psi \in \irrf{L_1H} \}$. Now, since  $\chi \in \irr{G \mid \Delta}$, it follows that $\chi \in \irrf G$, which completes the proof.
\end{proof}

\begin{remark}
As we mentioned, if a solvable group $G$ lies in $\mathfrak{F}$, then its $\mathfrak{F}'$-characters coincide with its linear characters. The converse is false; for example, the symmetric group $\mathfrak{S}_4$ is not supersolvable but $\text{Irr}_\mathfrak{U}(\mathfrak{S}_4)=\text{Lin}(\mathfrak{S}_4)$. However, observe that linear characters of $G$ automatically satisfy the conditions of Theorem \ref{caracterización descendente} and thus are always included in the $\mathfrak{F}'$-characters of $G$.
\end{remark}

Next, we provide information about the number of $\mathfrak{F}'$-characters of a solvable group, in the case where $\mathfrak{F}$ contains the formation of nilpotent groups.

\begin{proposition}
Let $\mathfrak{F}$ be a saturated formation with $\mathfrak{N} \subseteq \mathfrak{F}$. Let $G$ be a solvable group, and let $H$ be an $\mathfrak{F}$-projector of $G$. Then  
$$
|\irrf G| = |\irr{H/H'}|\,.
$$  
That is, the number of $\mathfrak{F}'$-characters of $G$ coincides with the number of linear characters of $H$.  
\end{proposition}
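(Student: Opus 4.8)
The plan is to induct on $|G:H|$. When $G=H$ (that is, $G\in\mathfrak F$) we have $\irrf G=\text{Lin}(H)$, so $|\irrf G|=|H:H'|=|\irr{H/H'}|$ and the base case is immediate. For the inductive step assume $G\notin\mathfrak F$, so that $L_0H$ is a proper subgroup of $G$ having $H$ as an $\mathfrak F$-projector (Proposition \ref{basics}(a)); its $\mathfrak F'$-characters are built from the same data $K_1,L_1,\dots$, and since $H/H'$ is unchanged the inductive hypothesis yields $|\irrf{L_0H}|=|\irr{H/H'}|$. Everything thus reduces to the single equality $|\irrf{G}|=|\irrf{L_0H}|$, where $(G,K_0,L_0)$ satisfies the Navarro condition. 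Write $K=K_0$, $L=L_0$ and $\Delta=\Delta_0=\{\eta_L:\eta\in\irrf{LH}\}$.

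The heart of the argument is to partition both $\irrf{LH}$ and $\irrf G$ into fibers indexed by $\Delta$ and to check the fibers have matching sizes. First I would record that for $\eta\in\irrf{LH}$ the restriction $\eta_L$ is an $H$-invariant irreducible character: it is irreducible because $\eta_{K_1}$ is irreducible and $K_1\le L$, and $H$-invariant because $L\nor LH$. Hence $\Delta\sbs\irrh L$ and the restriction map $\irrf{LH}\to\Delta$ is surjective. By Gallagher's theorem the extensions of a fixed $\varphi\in\Delta$ to $LH$ form a single coset $\{\beta\eta_0:\beta\in\text{Lin}(LH/L)\}$; the key observation is that each such $\beta$ is trivial on $L$, hence on every $K_i,L_i$ (all contained in $L$), so multiplying by $\beta$ leaves untouched all the restrictions appearing in the recursive definition of $\irrf{LH}$. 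Therefore $\irrf{LH}$ is stable under twisting by $\text{Lin}(LH/L)$, each fiber is a full coset, and $|\irrf{LH}|=|\Delta|\cdot|\text{Lin}(LH/L)|$.

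On the side of $\irrf G$ I would use Theorem \ref{key}. For $\chi\in\irrf G$ the restriction $\theta=\chi_K$ is an $H$-invariant irreducible character, and by part (a) there is a unique $H$-invariant $\varphi$ of $L$ below $\theta$; since $\chi$ lies over some member of $\Delta\sbs\irrh L$, uniqueness forces that member to be $\varphi$, so $\varphi\in\Delta$ and $\theta$ is the unique $H$-invariant character $\theta_\varphi$ over $\varphi$ (part (b)). Thus $\irrf G$ is the disjoint union over $\varphi\in\Delta$ of the extensions of $\theta_\varphi$ to $G$, with no double counting precisely because of the uniqueness in Theorem \ref{key}(a). Each $\theta_\varphi$ does extend to $G=KH$: by Theorem \ref{key}(c) this is equivalent to $\varphi$ extending to $LH$, which holds as $\varphi\in\Delta$. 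A second application of Gallagher's theorem gives exactly $|\text{Lin}(G/K)|$ extensions, so $|\irrf G|=|\Delta|\cdot|\text{Lin}(G/K)|$.

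It remains to match the two fiber sizes: from the Navarro condition $K\cap LH=L$ one gets $H\cap K=H\cap L$, whence $G/K=KH/K\cong H/(H\cap L)\cong LH/L$, so $|\text{Lin}(G/K)|=|\text{Lin}(LH/L)|$ and therefore $|\irrf G|=|\irrf{LH}|$, closing the induction. The main obstacle I anticipate is the bookkeeping of the fibers, namely establishing that $\irrf{LH}$ is closed under twisting by $\text{Lin}(LH/L)$ (so that every fiber is a full coset) and that each $\chi\in\irrf G$ sits over exactly one $\varphi\in\Delta$. Both rest on the same two facts: the defining conditions of $\irrf{\cdot}$ only involve restrictions to subgroups contained in $L$, and the uniqueness assertions in Theorem \ref{key}. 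Once these are settled, the index count is routine, and notably the whole argument uses only $\mathfrak N\sbs\mathfrak F$ with no parity hypothesis, as the statement requires.
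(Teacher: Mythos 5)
Your proof is correct, but it takes a genuinely different route from the paper: the paper disposes of this proposition in one line, citing Theorem A of \cite{N} (Navarro's count $|\irrf G| = |\irr{\norm G H / H'}|$), combined with the fact that $\norm G H = H$ when $\mathfrak{N} \subseteq \mathfrak{F}$ (Proposition \ref{basics}(e)). You instead reconstruct a self-contained inductive proof inside the paper's own toolkit. Your steps all check out: the reduction to the single layer $(G,K_0,L_0)$ is legitimate because $H$ is an $\mathfrak{F}$-projector of $L_0H$ and the recursion defining $\irrf{L_0H}$ is intrinsic to that group, and the induction is well founded since $L_0H < G$ whenever $G \notin \mathfrak{F}$ (otherwise $K_0 = K_0'$, forcing $K_0 = 1$ by solvability). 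The two fiber counts are sound: members of $\Delta_0$ are irreducible and $H$-invariant because $\eta_{K_1}$ is irreducible with $K_1 \le L_0 \nor L_0H$; stability of $\irrf{L_0H}$ under twisting by $\text{Lin}(L_0H/L_0)$ holds because all restrictions in the recursive definition land in subgroups of $L_0$; disjointness of the fibers in $\irrf G$ follows from the uniqueness in Theorem \ref{key}(a),(b); existence of extensions follows from Theorem \ref{key}(c); and Gallagher gives the fiber sizes, matched via $G/K_0 \cong H/(H\cap K_0) = H/(H \cap L_0) \cong L_0H/L_0$, which is exactly the Navarro condition $K_0 \cap L_0H = L_0$. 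What the paper's citation buys is brevity; what your argument buys is a proof that never leaves the paper (only Theorem \ref{key}, Gallagher, and Proposition \ref{basics}) and makes explicit that no odd-order hypothesis is needed, since Theorem \ref{key} — unlike Theorem \ref{cor1} — carries no parity restriction; in effect you have re-derived the relevant case of Navarro's Theorem A rather than quoting it.
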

\begin{proof}
This follows from Theorem A of \cite{N}.
\end{proof}

\section{Strong pair series}

In this section, we aim to present an alternative characterization of the $\mathfrak{F}'$-characters. This characterization is analogous to the one given by Isaacs in \cite{Isaacs} for the formation $\mathfrak{N}$ of nilpotent groups, but it extends to all saturated formations containing $\mathfrak{N}$. 

 Suppose that $\mathfrak{F}$ is a saturated formation with $\mathfrak{N} \subseteq \mathfrak{F}$. Let $G$ be a solvable group and $H$ an $\mathfrak{F}$-projector  of $G$.
Suppose that  $\{S_i \}_{0 \leq i \leq r}$ is an $H$-composition series for $G$. This is an $H$-invariant, subnormal series such that $S_{i+1}/S_i$ is $H$-simple (and thus abelian) for $0 \leq i \leq r-1$. The \textit{$H$-composition length} $r$ is an invariant of $G$.

We say that $\{(S_i, \theta_i) \}_{0 \leq i \leq r}$ is an \textit{$H$-pair series} for $G$ that is associated with the $H$-composition series $\{S_i \}_{0 \leq i \leq r}$, if the characters $\theta_i \in\irr{S_i}$ are $H$-invariant   and $\theta_i$ lies under $\theta_{i+1}$ for $0 \leq i < r$. Usually, we will write the $H$-pair series as  $$(1,1) =(S_0, \theta_0)  \triangleleft (S_1, \theta_1) \triangleleft \cdots \triangleleft (S_r, \theta_r)$$ to simplify the understanding of the chain of pairs being used.

We say that an $H$-pair series $\{(S_i, \theta_i) \}_{0 \leq i \leq r}$ for $G$ is \textit{strong} if each of the characters $\theta_i$ extends irreducibly to $S_iH$. An irreducible character $\chi$ of $G$ is said to be an \textit{$\mathfrak{F}$-head character} of $G$ if $(G,\chi)$ appears as $(S_r,\theta_r)$ in a strong  $H$-pair series.

\begin{remark}  \label{remark 3}
It is worth noting that these irreducible character extensions define, in turn, $\mathfrak{F}$-head characters of $S_iH$. Indeed, fixing $ i \in \{0, \ldots, r\}$, we denote by $\eta_i$ an irreducible extension of $\theta_i$ to $S_iH$ and we define $\{T_j\}_{0 \leq j \leq m}$  such that $T_0=S_i$, $T_m=S_iH$ and $T_{j+1}/T_j$ is an $H$-composition factor for all $j$. Then one can easily verify that
$$(1,1)=(S_0,\theta_0) \triangleleft \cdots \triangleleft (S_i,\theta_i) = (T_0,\eta_{i_{\mid T_0}}) \triangleleft \cdots \triangleleft (T_j,\eta_{i_{\mid T_j}})\triangleleft \cdots \triangleleft (S_iH,\eta_i)=(T_m,\eta_i) $$
is a strong $H$-pair series for $S_iH$.
\end{remark}

\begin{lemma}\label{MH=UH} Let $\mathfrak{F}$ be a saturated formation with $\mathfrak{N} \subseteq \mathfrak{F}$. Let $G$ be a solvable group, and let $H$ be an $\mathfrak{F}$-projector of $G$.   Suppose that $M/U$ is an $H$-composition factor of $G$. Let $\alpha$ be an irreducible character of $M$ lying over the irreducible character $\theta$ of $U$. If $\alpha$ and $\theta$ extend irreducibly to $MH$ and $UH$ respectively, then $\theta$ is the unique $H$-invariant irreducible constituent of $U$ lying under $\alpha$. Moreover, if $MH=UH$, then $\alpha_U = \theta$.\end{lemma}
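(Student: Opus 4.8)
The plan is to treat the two assertions separately; throughout, set $T=I_M(\theta)$ for the inertia group of $\theta$ in $M$. First I would record the standing structural facts: since $\theta$ extends to $UH$ and $\alpha$ extends to $MH$, both $\theta$ and $\alpha$ are $H$-invariant, and since $H$ normalizes $M$ and fixes $\theta$ it also normalizes $T$; hence $MH$, $UH$ and $TH$ are subgroups of $G$ containing $H$, so by Proposition~\ref{basics}(a) the subgroup $H$ is an $\mathfrak{F}$-projector of each of them. Recall also that $M/U$ is abelian, being an $H$-composition factor.

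For the uniqueness statement, I would argue by a fixed-point count. By Clifford's theorem the irreducible constituents of $\alpha_U$ form a single $M$-orbit $\Omega$, namely the conjugacy orbit of $\theta$ under $M$. Because $\alpha$ is $H$-invariant and $U$ is $H$-invariant, $H$ permutes $\Omega$ and fixes $\theta$; thus $MH$ acts transitively on $\Omega$, and a direct check using that $\theta$ is $H$-invariant shows $\mathrm{Stab}_{MH}(\theta)=TH$, so that $\Omega\cong MH/TH$ as $MH$-sets. An $H$-fixed point is then a coset, which may be represented by some $m\in M$, satisfying $m^{-1}Hm\subseteq TH$. The key step is to upgrade this containment to $m\in TH$: the subgroup $m^{-1}Hm$ is an $\mathfrak{F}$-projector of $MH$, and being contained in $TH$ it is an $\mathfrak{F}$-projector of $TH$ by Proposition~\ref{basics}(a); since $H$ is also an $\mathfrak{F}$-projector of $TH$, the two are conjugate in $TH$, and combining this with the self-normalization of $H$ from Proposition~\ref{basics}(e) forces $m\in TH$. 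Hence the base point is the only $H$-fixed element of $\Omega$, which is exactly the statement that $\theta$ is the unique $H$-invariant irreducible constituent of $\alpha_U$.

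For the final claim, assume $MH=UH$. Then $M\le UH$, and since by hypothesis $\theta$ extends to $UH$, restricting such an extension to $M$ yields (by comparing degrees, as every constituent lies over $\theta$) an irreducible character of $M$ extending $\theta$. Thus $\theta$ extends to $M$, and as $M/U$ is abelian, Gallagher's theorem shows every irreducible character of $M$ lying over $\theta$ has degree $\theta(1)$. In particular $\alpha(1)=\theta(1)$, and since $\alpha$ lies over $\theta$ this forces $\alpha_U=\theta$.

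The main obstacle is the uniqueness step: identifying $\Omega$ with $MH/TH$ and, above all, converting the inclusion $m^{-1}Hm\subseteq TH$ into the equality $m\in TH$. This is where the $\mathfrak{F}$-projector hypotheses do the real work, via the conjugacy of projectors in $TH$ together with the self-normalizing property $\norm G H=H$; the rest of the argument is routine Clifford theory and an application of Gallagher's theorem.
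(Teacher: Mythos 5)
Your proof is correct, but the uniqueness half takes a genuinely different route from the paper's. The paper exploits the dichotomy forced by $H$-simplicity of $M/U$: the subgroup $M \cap UH$ is normal in $MH$ and lies between $U$ and $M$, hence equals either $U$ or $M$. If $M \cap UH = U$, then $(MH,M,U)$ satisfies the Navarro condition and Theorem~\ref{key}(a) gives the uniqueness of the $H$-invariant constituent at once; if $M \cap UH = M$, then $MH = UH$ and the paper argues exactly as you do for the ``moreover'' clause (restrict an extension of $\theta$ to $M$, apply Gallagher to write $\alpha = \lambda\,\tilde\theta_M$, and conclude $\alpha_U = \theta$, which makes uniqueness trivial in that case). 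Your argument avoids Theorem~\ref{key} entirely: you identify the Clifford orbit $\Omega$ of $\theta$ with $MH/TH$ where $T = I_M(\theta)$, note that an $H$-fixed coset yields $H^m \le TH$ with $m \in M$, and then use the conjugacy of the two $\mathfrak{F}$-projectors $H^m$ and $H$ of $TH$ together with $\norm{G}{H} = H$ (Proposition~\ref{basics}(e)) to force $m \in TH$. This is essentially the Isaacs-style fixed-point argument that the paper itself deploys later in Proposition~\ref{prop B}, and applied here it buys something the paper's proof does not: your uniqueness claim needs only the $H$-invariance of $\theta$ and $\alpha$ (not their extendibility, nor even the $H$-simplicity of $M/U$), so it is strictly more general. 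What the paper's case division buys in exchange is economy and alignment with its own machinery: the dichotomy $M \cap UH \in \{U, M\}$ and the appeal to Theorem~\ref{key} are reused as the organizing principle of Lemma~\ref{5.2}, so the paper's proof doubles as a template for that longer argument.
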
 
 \begin{proof}
We have that $M \cap UH$ is a normal subgroup of $MH$ that contains $U$ and is contained in $M$. Since $M/U$ is a chief factor of $MH$, it follows that $M \cap UH$ is either $U$ or $M$. 

If $M \cap UH = U$, then $(MH, M, U)$ satisfies the Navarro condition with respect to $\mathfrak{F}$, and by Theorem~\ref{key}, we have that $\theta$ is the unique $H$-invariant character of $U$ lying under $\alpha$.

In the case where $M \cap UH = M$, we have $MH = UH$. If $\tilde{\theta}$ is an extension of $\theta$ to $MH$, then $\tilde{\theta}_M$ is irreducible. Since $M/U$ is abelian, by Gallagher's theorem, we have $\alpha = \lambda  \tilde{\theta}_M$ for some $\lambda \in \text{Lin}(M/U)$. Therefore, $\alpha_U = \theta$, which completes the proof.
\end{proof} 

\begin{lemma} \label{5.2}
    Let $\mathfrak{F}$ be a saturated formation with $\mathfrak{N} \subseteq \mathfrak{F}$. Let $G$ be a solvable group, and let $H$ be an $\mathfrak{F}$-projector of $G$. Suppose that $M/V$ and $M/U$ are two distinct $H$-composition factors of $G$, and let $D = U \cap V$. If 
$$(D, \gamma) \triangleleft (U, \theta) \triangleleft (M, \alpha)\,,$$  
with $\gamma$, $\theta$, and $\alpha$ extending irreducibly to $DH$, $UH$, and $MH$, respectively, then there exists an $H$-invariant irreducible character $\beta$ of $V$, which extends irreducibly to $VH$ and satisfies  
$$(D, \gamma) \triangleleft (V, \beta) \triangleleft (M, \alpha)\,.$$
\end{lemma}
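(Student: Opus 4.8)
The plan is to normalise the ambient group, expose the ``butterfly'' produced by the two composition factors, build $\beta$ from below over $\gamma$, and finally force it underneath $\alpha$. First I would replace $G$ by $MH$; by Proposition \ref{basics}(a),(e) the subgroup $H$ is still a self-normalising $\mathfrak{F}$-projector, and now $D\le U,V\le M$ are all normal in $G=MH$. Since $M/U$ and $M/V$ are distinct $H$-composition factors, neither of $U,V$ contains the other, so $UV$ is $H$-invariant and strictly contains $U$; $H$-simplicity of $M/U$ then gives $M=UV$. With $D=U\cap V$ this yields $M/D=(U/D)\times(V/D)$ together with the $H$-isomorphisms $M/U\cong V/D$ and $M/V\cong U/D$; moreover $[U,V]\le U\cap V=D$, so $M/D$ is abelian and $\gamma$ induces a well-defined alternating form $(\bar x,\bar y)\mapsto\gamma([x,y])$ on $M/D$. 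The task is then to produce an $H$-invariant irreducible constituent $\beta$ of $\alpha_V$ lying over $\gamma$ and extending to $VH$; lying under $\alpha$ is then automatic.

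I would construct a candidate $\beta$ from the bottom and check afterwards that it sits under $\alpha$. Applying the dichotomy of Lemma \ref{MH=UH} to the chief factor $V/D$, either $(VH,V,D)$ satisfies the Navarro condition or $VH=DH$. In the first case Theorem \ref{key}(b) furnishes the unique $\beta\in\irrh V$ over $\gamma$, and Theorem \ref{key}(c) turns the given extendibility of $\gamma$ to $DH$ into extendibility of $\beta$ to $VH$. In the second case $\gamma$ already extends to $VH=DH$, and the restriction to $V$ of any extension $\hat\gamma$ is an $H$-invariant irreducible character of $V$ over $\gamma$ extending to $VH$ (here $\hat\gamma_D=\gamma$ forces $\hat\gamma_V$ to be irreducible). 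Thus an $H$-invariant $\beta$ over $\gamma$ extending to $VH$ always exists, and it is unique in the Navarro case.

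The heart is to identify $\beta$ as a constituent of $\alpha_V$, and here I would split on the chief factor $M/V$ via Lemma \ref{MH=UH}. If $(G,M,V)$ satisfies the Navarro condition, Theorem \ref{key}(a) gives the unique $H$-invariant character of $V$ under $\alpha$; using the uniqueness of the $H$-invariant irreducible character of $D$ lying under a given $H$-invariant character — a descent fact that I would derive from the conjugacy of projectors inside $I_G(\gamma)$ together with $\norm{G}{H}=H$ — this character lies over $\gamma$ and hence equals $\beta$. If instead $VH=G$, then $U/D$ is also absorbed, so Lemma \ref{MH=UH} gives $\theta_D=\gamma$ and $\theta(1)=\gamma(1)$, and I would run the going-down trichotomy for $\alpha$ over $\theta$ across $M/U$. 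When $\theta$ extends to $M$, degrees force $\alpha_V$ irreducible over $\gamma$; when $\theta$ is not $M$-invariant, $\alpha=\theta^M$ and a Mackey computation gives $\alpha_V=\gamma^V$, which contains the $H$-invariant character of $V$ over $\gamma$ built above; and when $\theta$ is fully ramified, the decisive observation is that $M/U\cong V/D$ carries the same alternating form as the factor governing $\beta$, so $\gamma$ is fully ramified with respect to $V/D$, the fully ramified character over $\gamma$ is unique (hence $H$-invariant), and the degree identity $\alpha(1)=\gamma(1)\sqrt{|V:D|}=\beta(1)$ forces $\alpha_V=\beta$.

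I expect the fully ramified subcase of the absorbed case to be the main obstacle, where $\alpha_V$ must be identified with the unique fully ramified character over $\gamma$. The point that makes it tractable — and that explains why, unlike Lemma \ref{fully} and Theorem \ref{cor1}, no odd-order hypothesis intervenes — is that I only need $\beta$ to extend to $VH$, not to extend compatibly above a prescribed extension of $\gamma$; this bare extendibility is supplied by Theorem \ref{key}(c), which is valid for every saturated formation. A secondary technical point is the possibility that $U/D$ and $V/D$ are $H$-isomorphic with $\gamma$ stabilised by a diagonal subgroup, which obstructs any naive reduction to $M$-invariant $\gamma$; this is precisely why I would rely on the descent fact above rather than on passing to the inertia subgroup of $\gamma$.
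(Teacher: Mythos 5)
Your case skeleton actually reproduces the paper's: crossing your top dichotomy ($M\cap VH=V$ versus $VH=MH$) with your bottom dichotomy ($V\cap DH=D$ versus $VH=DH$) gives exactly the paper's four cases, and two of your treatments are sound as written — the ``doubly Navarro'' case (apply Theorem \ref{key} twice, as the paper does) and the case $VH=MH$ via the going-down trichotomy (though there your fully ramified branch, with its alternating form and the degree count, is much heavier than needed and not yet rigorous as stated, since $\gamma([x,y])$ is undefined for nonlinear $\gamma$ without a character-triple reduction; the paper dispatches both $M$-invariant branches at once by Lemma \ref{restriccionbiyeccion}, because $\theta$ being $M$-invariant with $\theta_D=\gamma$ irreducible already makes restriction a bijection $\irr{M\mid\theta}\to\irr{V\mid\gamma}$).

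The genuine gap is the configuration where $(MH,M,V)$ satisfies the Navarro condition but $VH=DH$ (your Case I combined with your bottom case B; it forces $UH=MH$ and is precisely case (2) of the paper's proof). There you take $\beta'$, the unique $H$-invariant character of $V$ under $\alpha$ given by Theorem \ref{key}(a), and assert that it lies over $\gamma$ ``using the uniqueness of the $H$-invariant irreducible character of $D$ lying under a given $H$-invariant character.'' But that descent fact is a \emph{uniqueness} statement only: to conclude $\beta'$ lies over $\gamma$ you must first know that $\beta'_D$ has \emph{some} $H$-invariant constituent, and existence is exactly what is not automatic here — Theorem \ref{key}(a) cannot provide it, because $(VH,V,D)$ fails the Navarro condition in this very configuration. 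This is the step the paper spends its sub-cases (2.1)/(2.2) on: either $\gamma$ is $U$-invariant, hence $M$-invariant, and Clifford's theorem shows $\gamma$ is the only constituent of $\alpha_D$; or $\theta=\gamma^U$, and the induction bijection of Lemma \ref{induccionbiyeccion}, $\irr{V\mid\gamma}\to\irr{M\mid\theta}$, transports $H$-invariance so that the preimage of $\alpha$ is $H$-invariant and hence equals $\beta'$. (One can also patch it by observing that here $V=D(V\cap H)$ makes $\gamma$, and therefore every $M$-conjugate of $\gamma$, $V$-invariant; so $\beta'_D$ is a multiple of a single constituent, which is then $H$-invariant and, by your uniqueness fact, equal to $\gamma$. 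But some such argument must appear.) A smaller inaccuracy in the same place: ``hence equals $\beta$'' is unwarranted, since when $VH=DH$ the $H$-invariant characters of $V$ over $\gamma$ extending to $VH$ need not be unique; fortunately you do not need $\beta'=\beta$ at all, because Theorem \ref{key}(c) applied to $(MH,M,V)$ already shows that $\beta'$ extends to $VH$.
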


\begin{proof} We begin with some preliminary observations.

Since $M/U$ and $M/V$ are abelian chief factors of $MH$, we have $M' \leq U \cap V = D$, and thus $M/D$ is abelian. As $U$ and $V$ are distinct, we have that $UV = M$.

The subgroup $M \cap UH$ is normal in $M$, $H$-invariant, and contains $U$. As $M/U$ is a chief factor of $MH$, either $M \cap UH = M$ or $M \cap UH = U$. Similarly, since $M/U \cong V/D$ and $M/V \cong U/D$ are chief factors of $MH$, we have
$$DH \cap U \in \{D, U\}, \quad M \cap VH \in \{M, V\}, \quad \text{and} \quad DH \cap V \in \{D, V\}.$$

Now assume $UH < MH$. Then $UH \cap M = U$ and $DH < VH$. To verify the latter, suppose $DH = VH$. Then
$$V = DH \cap V = D(H \cap V),$$
and therefore
$$MH = UVH = UD(H \cap V)H = UH,$$
a contradiction. Similarly, if $VH < MH$, then $VH \cap M = V$ and $DH < UH$.

Conversely, if $UH = MH$, then $VH = DH$. Suppose, for the sake of contradiction, that $DH < VH$. Then
$$D = DH \cap V = D(H \cap V),$$
which implies $V \cap H \leq D$. By Lemma~\ref{hup}, we have
$$M \cap H = (U \cap H)(V \cap H) \leq U.$$
Hence,
$$M = M \cap MH = M \cap UH = U(M \cap H) = U,$$
a contradiction. Similarly, if $VH = MH$, then $DH = UH$.

We analyze the four possible situations:

\medskip
\textbf{(1) $MH = UH = VH$.}

\medskip

Since $MH = UH$ by our previous argument, we conclude that $VH = DH$, and therefore
$$DH = UH = VH = MH.$$
By Lemma~\ref{MH=UH}, $\alpha_U = \theta$ and $\theta_D = \gamma$, so $\alpha$ extends $\gamma$, and we take $\beta = \alpha_V$.

\medskip
\textbf{(2)} $VH < MH$ and $MH = UH$.
\medskip

We have $D = U \cap DH$, $V = M \cap VH$, and $VH = DH$. The triple $(MH, M, V)$ satisfies the Navarro condition with respect to $\mathfrak{F}$. By Theorem~\ref{key}, there exists a unique $\beta \in \irrh V$ under $\alpha$.

Since $MH = UH$, by Lemma~\ref{MH=UH}, we have $\alpha_U = \theta$. As $U/D$ is a chief factor of $UH = MH$, we consider two possibilities (by Exercise 6.12 of \cite{Is}):

\medskip
\textbf{(2.1)} $\gamma$ is $U$-invariant (and thus $M$-invariant). Then the only irreducible constituent of $\alpha_D = \theta_D$ is $\gamma$. Thus, any irreducible character $\beta$ of $V$ lying above $\alpha$ must lie above $\gamma$, and therefore $\beta$ lies over $\gamma$.

\medskip
\textbf{(2.2)} $\theta = \gamma^U$. Here $\gamma$ is $V$-invariant since $DH \cap V = V = D(H \cap V).$
The hypotheses of Lemma~\ref{induccionbiyeccion} are satisfied, and induction defines a bijection
$$\irr{V \mid \gamma} \rightarrow \irr{M \mid \theta}.$$
Let $\beta' \in \irr{V \mid \gamma}$ be such that $\alpha = (\beta')^M$. For any $h \in H$, we have
$$(\beta')^M = \alpha = \alpha^h = ((\beta')^M)^h = ((\beta')^h)^M,$$
with $(\beta')^h \in \irr{V \mid \alpha}$, since $\alpha$ is $H$-invariant. Hence, $(\beta')^h = \beta'$ for all $h \in H$, so $\beta'$ is an $H$-invariant constituent of $\alpha_V$. By the uniqueness of $\beta$ as an $H$-invariant constituent of $\alpha_V$, it follows that $\beta = \beta'$.

\medskip
\textbf{(3)} $UH < MH$ and $MH = VH$.
\medskip

We have $DH = UH$ and $U = M \cap UH$. By Lemma~\ref{MH=UH}, it follows that $\theta_D = \gamma$. Observe that $(VH, V, D)$ satisfies the Navarro condition with respect to $\mathfrak{F}$. Then by Theorem~\ref{key}, there exists a unique character $\beta \in \irrh{V \mid \gamma}$. We want to show that $\beta$ is a constituent of $\alpha_V$.

Since $M/U$ is a chief factor of $MH$, by the ``going down'' theorem (Theorem 6.18 of \cite{Is}), we have two possibilities:

\medskip

\textbf{(3.1)} $\theta$ is $M$-invariant. Then since $\theta_D = \gamma$ and $\theta$ is $M$-invariant, we can apply Lemma~\ref{restriccionbiyeccion}, which states that restriction defines a bijection from $\irr{M \mid \theta}$ to $\irr{V \mid \gamma}$. Hence, $\alpha_V \in \irrh{V \mid \gamma}$, and thus $\beta = \alpha_V$. 

\medskip
\textbf{(3.2)} $\alpha = \theta^M$. In this case, since $\gamma^V = (\theta_D)^V = (\theta^M)_V = \alpha_V$, and $\beta \in \irrh{V \mid \gamma}$, we deduce that $\beta$ lies under $\alpha$.

\medskip
\textbf{(4)} $UH < MH$ and $VH < MH$.
\medskip

It follows that $UH \cap M = U$, $VH \cap M = V$, $DH < UH$, and $DH < VH$. Since both $(MH, M, V)$ and $(VH, V, D)$ satisfy the Navarro condition with respect to $\mathfrak{F}$, the theorem follows by applying Theorem~\ref{key} twice.
\end{proof}

The proof of the theorem below follows that of Theorem 5.5 of \cite{Isaacs}, differing mainly in the final step.

\begin{thm}\label{eslabon}
   Let $\mathfrak{F}$ be a saturated formation with $\mathfrak{N} \subseteq \mathfrak{F}$.  Let $G$ be a solvable group, and let $H$ be an $\mathfrak{F}$-projector of $G$. Suppose $\{T_i\}_{0 \leq i \leq r}$ is an arbitrary $H$-composition series for $G$. Let $\chi \in \irr G$ be an $\mathfrak{F}$-head character of $G$. Then the series $\{T_i\}_{0 \leq i \leq r}$ is associated with some unique strong $H$-pair series $\{(T_i, \varphi_i)\}_{0 \leq i \leq r}$ such that $\varphi_r = \chi$.
\end{thm}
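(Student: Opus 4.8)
The plan is to prove existence and uniqueness separately; both reduce to lemmas already established, existence to the interchange Lemma~\ref{5.2} and uniqueness to Lemma~\ref{MH=UH}. Throughout, note that $\chi$, being $G$-invariant, is in particular $H$-invariant, and that the defining property of an $\mathfrak{F}$-head character supplies at least one strong $H$-pair series with top pair $(G,\chi)$, associated to \emph{some} $H$-composition series $\{S_i\}_{0\le i\le r}$. Since the $H$-composition length is an invariant of $G$, this series has the same length $r$ as the given $\{T_i\}_{0\le i\le r}$, so the two series are indexed compatibly.

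For existence, I would transport the strong pair series on $\{S_i\}$ onto the arbitrary series $\{T_i\}$. The key combinatorial input is that any two $H$-composition series of $G$ are connected by a finite sequence of \emph{elementary transpositions}: a Jordan--H\"older type argument carries $\{S_i\}$ to $\{T_i\}$ through intermediate $H$-composition series, each obtained from the previous one by replacing a single intermediate term. In such a transposition the terms $S_{j-1}=T_{j-1}=D$ and $S_{j+1}=T_{j+1}=M$ agree, while $S_j=U$ is replaced by $T_j=V$; here $M/U$ and $M/V$ are distinct $H$-composition factors, and because $U$ and $V$ are both maximal among proper $H$-invariant normal subgroups of $M$ containing $D$, one checks $U\cap V=D$ and $UV=M$. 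This is exactly the setting of Lemma~\ref{5.2}. Maintaining the invariant ``the current composition series carries a strong $H$-pair series with top $(G,\chi)$,'' each transposition is handled by applying Lemma~\ref{5.2} to the local piece $(D,\gamma)\triangleleft(U,\theta)\triangleleft(M,\alpha)$: it produces an $H$-invariant $\beta\in\irrh V$ extending irreducibly to $VH$ with $(D,\gamma)\triangleleft(V,\beta)\triangleleft(M,\alpha)$ strong, and replacing the pair $(U,\theta)$ by $(V,\beta)$ while keeping every other pair fixed preserves all the ``lies under'' relations. After finitely many steps we reach $\{T_i\}$, which proves existence.

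For uniqueness I would fix $\{T_i\}$ and argue by downward induction on $i$. Suppose $\{(T_i,\varphi_i)\}$ and $\{(T_i,\varphi_i')\}$ are strong $H$-pair series with $\varphi_r=\varphi_r'=\chi$, and assume $\varphi_{i+1}=\varphi_{i+1}'$. Then both $\varphi_i$ and $\varphi_i'$ are $H$-invariant irreducible constituents of $(\varphi_{i+1})_{T_i}$ that extend irreducibly to $T_iH$, while $\varphi_{i+1}$ extends irreducibly to $T_{i+1}H$ and $T_{i+1}/T_i$ is an $H$-composition factor. Lemma~\ref{MH=UH} then forces $\varphi_i$ to be the unique $H$-invariant irreducible constituent of $(\varphi_{i+1})_{T_i}$, whence $\varphi_i'=\varphi_i$; descending from $i=r$ to $i=0$ yields uniqueness.

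The main obstacle is the existence half, and within it the combinatorial reduction to a single elementary transposition: one must verify that the passage between two arbitrary $H$-composition series can be realized through transpositions each meeting, verbatim, the hypotheses of Lemma~\ref{5.2} (distinct $H$-composition factors $M/U$, $M/V$ with $U\cap V=D$), and that replacing one pair leaves the remaining pairs and all incidence relations intact. Since the character-theoretic content of each step is entirely packaged inside Lemma~\ref{5.2}, the delicate part is genuinely the bookkeeping in the poset of $H$-invariant subnormal subgroups rather than any new representation theory.
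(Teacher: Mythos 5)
Your skeleton matches the paper's proof in its essential content: existence rests on the exchange Lemma~\ref{5.2}, uniqueness on Lemma~\ref{MH=UH}, and your verification of the local hypotheses at a transposition (that $UV=M$ and $U\cap V=D$ follow from the $H$-simplicity of $M/U$, $M/V$ and $U/D$) is correct, as is your downward induction for uniqueness --- the latter is exactly what the paper compresses into the single sentence that uniqueness follows from Lemma~\ref{MH=UH}.

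The genuine gap is the statement you lean on for existence and explicitly leave unverified: that any two $H$-composition series of $G$ are connected by a chain of elementary transpositions. This is not a quotable Jordan--H\"older corollary in this operator-group setting; it is the actual content of the theorem's induction, and the paper (following Theorem 5.5 of \cite{Isaacs}) proves it rather than cites it. The obstruction to a naive transposition argument is that, writing $m$ for the smallest index with $S_i=T_i$ for $i\ge m$, $M=S_m=T_m$, $U=S_{m-1}$, $V=T_{m-1}$ and $D=U\cap V$, the swap $U\to V$ is \emph{not} an elementary transposition of the current series $\{S_i\}$: one needs $S_{m-2}\le V$ for the swapped chain to remain a series, and $S_{m-2}$ (a maximal $H$-invariant normal subgroup of $U$) need not be contained in $V$ at all. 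So each exchange must first be \emph{prepared} by rerouting the lower portion of the series through $D$, and supplying that rerouting is where all the work lies. The paper does it by induction on $m$: splice an $H$-composition series of $D$ (of length $m-2$, by Jordan--H\"older for groups with operators) below $U$ to form a series $\{N_i\}$ agreeing with $\{S_i\}$ from index $m-1$ on, invoke the inductive hypothesis to transport the strong $H$-pair series to $\{N_i\}$, perform the single exchange $U\to V$ via Lemma~\ref{5.2} to get $\{X_i\}$, which agrees with $\{T_i\}$ from index $m-1$ on, and invoke the inductive hypothesis once more. Your connectivity claim is true, but its proof is precisely this induction carrying the characters along; as written, your existence half assumes the very thing that has to be proved, and once you write the induction down your argument becomes the paper's.
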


\begin{proof}
Since  $\chi$ is an $\mathfrak{F}$-head character of $G$, by definition, there exists a strong $H$-pair series 
$$(1,1) =(S_0, \theta_0)  \triangleleft (S_1, \theta_1) \triangleleft \cdots \triangleleft (S_r, \theta_r)=(G, \chi)\,.$$  We want to show the existence of characters $\varphi_i \in \irr {T_i}$ satisfying that $$(1,1) =(T_0, \varphi_0)  \triangleleft (T_1, \varphi_1) \triangleleft \cdots \triangleleft (T_r, \varphi_r)=(G, \chi)\,$$ is a strong $H$-pair series. 

Now, note that $S_r = G = T_r$. Therefore, there exists a unique smallest non-negative integer $m$ such that $S_i = T_i$ for all $i$ satisfying $m \leq i \leq r$. If $m = 0$, then $T_i = S_i$ for all $i \in \{0,\hdots,r\}$, and we can set $\varphi_i = \theta_i$, concluding the proof. Thus, we assume $m > 0$ and proceed by downward induction on $m$.

 We have $S_m = T_m$, but $S_{m-1} \neq T_{m-1}$. Since $S_0 = 1 = T_0$, it follows that $m-1 > 0$, and hence $m \geq 2$. Define $M = S_m = T_m$, $U = S_{m-1}$, and $V = T_{m-1}$. Write $D=U \cap V$. Clearly, $U$ and $V$ are nontrivial and distinct. Furthermore, $M/U$ and $M/V$ are $H$-composition factors of $G$, so $U$ and $V$ are maximal among $H$-invariant normal subgroups of $M$. As $U \neq V$, it follows that $UV = M$. Also, $U/D$ is $H$-isomorphic to  $M/V$, thus $U/D$ is an $H$-composition  factor of $G$, and similarly, $V/D$ is an $H$-composition factor of $G$.

The $H$-composition length of $U = S_{m-1}$ is $m-1$, implying that the $H$-composition length of $D$ is $m-2$. Thus, we can choose an $H$-composition series 
$$1 = D_0 \lhd D_1 \lhd \cdots \lhd D_{m-2} = D$$
for $D$. Appending the subgroups $S_i$ for $m-1 \leq i \leq r$ to this series yields a new $H$-composition series for $G$. Denote this series by $\{N_i\}_{0 \leq i \leq r}$, where $N_i = D_i$ for $0 \leq i \leq m-2$, and $N_i = S_i$ for $m-1 \leq i \leq r$. In particular, $N_{m-1} = S_{m-1} = U$ and $N_{m-2} = D_{m-2} = D$.

Using the inductive hypothesis with the series $\{N_i\}_{0 \leq i \leq r}$ in place of $\{T_i\}_{0 \leq i \leq r}$ and $m-1$ in place of $m$, we conclude that $\{N_i\}_{0\leq i \leq r}$ is associated with a strong $H$-pair series with $\mathfrak{F}$-head character $\chi$. Moreover, for $i \geq m$, we have $T_i = S_i = N_i$. Hence, we may replace $\{S_i\}_{0 \leq i \leq r}$ with $\{N_i\}_{0 \leq i \leq r}$ without loss of generality, and we can assume $N_i = S_i$ for all $i \in \{0,\hdots,r\}$. In particular, $D = D_{m-2} = N_{m-2} = S_{m-2}$.

Next, construct a new $H$-composition series for $G$ by modifying $\{S_i\}_{0 \leq i \leq r}$, replacing $S_{m-1} = U$ with $V$. Note that $S_{m-2} = D \lhd V \lhd M = S_m$. And since $V/D$ is $H$-isomorphic to $M/U = S_m/S_{m-1}$ and $M/V$ is $H$-isomorphic to $T_m/T_{m-1}$, the series is an $H$-composition series. We denote this new  series by $\{X_i\}_{0 \leq i \leq r}$, where $X_i = S_i$ for $i \neq m-1$ and $X_{m-1} = V$. Now, since $\{S_i\}_{0 \leq i \leq r}$ is associated to a strong $H$-pair series with $\mathfrak{F}$-head character $\chi$, using Lemma \ref{5.2}, that is also the case for $\{X_i\}_{0 \leq i \leq r}$.

Finally, note that $T_i = S_i = X_i$ for $i \geq m$ and $T_{m-1} = V = X_{m-1}$, so the series $\{X_i\}_{0 \leq i \leq r}$ and $\{T_i\}_{0 \leq i \leq r}$ coincide from $m-1$. Thus, by applying the inductive hypothesis to $\{X_i\}_{0 \leq i \leq r}$ in place of $\{S_i\}_{0 \leq i \leq r}$ and $m-1$ in place of $m$, we conclude that the series $\{T_i\}_{0 \leq i \leq r}$ is associated with a strong $H$-pair series with $\mathfrak{F}$-head character $\chi$.

The uniqueness  of this strong $H$-pair series follows from  Lemma \ref{MH=UH}.
\end{proof}

\begin{thm}\label{5.4}
  Let $\mathfrak{F}$ be a saturated formation with $\mathfrak{N} \subseteq \mathfrak{F}$. Let $G$ be a solvable group, and let $\chi \in \irr G$. Then the following are equivalent:
  \begin{itemize}
    \item[(a)] $\chi$ is an $\mathfrak{F}'$-character of $G$;
    \item[(b)] $\chi$ is an $\mathfrak{F}$-head character of $G$.
  \end{itemize}
\end{thm}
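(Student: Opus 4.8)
The plan is to read off, from a strong $H$-pair series on a suitable $H$-composition series, exactly the data $(\theta_i,\varphi_i)$ appearing in the descending characterization of Theorem~\ref{caracterización descendente}, and conversely to build such a series from that data. I would fix an $H$-composition series of $G$ refining the canonical $H$-invariant subnormal series
$$G \rhd K_0 \rhd L_0 \rhd K_1 \rhd L_1 \rhd \cdots \rhd K_{m-1} \rhd L_{m-1} \rhd K_m = 1.$$
The structural observation driving everything is that the subgroup $TH$ is \emph{constant} along the segments $G/K_0$ and $L_i/K_{i+1}$: since $K_0H=G$ and $K_{i+1}H=L_iH$, every $H$-invariant $T$ inside these segments satisfies $TH=G$, respectively $TH=L_iH$. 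The value of $TH$ can only drop inside the abelian segments $K_i/L_i$.

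For (b)$\Rightarrow$(a): assuming $\chi$ is an $\mathfrak{F}$-head character, Theorem~\ref{eslabon} produces the unique strong $H$-pair series associated with the chosen refinement and having $\chi$ on top. I would let $\theta_i$ and $\varphi_i$ be the characters sitting at $K_i$ and $L_i$. That they extend to $K_iH$ and $L_iH$ is the strong condition, and $\theta_i$ lies over $\varphi_i$ by transitivity along the series. On each constant-$TH$ segment, two consecutive characters $\alpha$ over $\beta$ both extend to the common group $MH=UH$, so Lemma~\ref{MH=UH} gives $\alpha_U=\beta$; chaining this down $G/K_0$ and $L_i/K_{i+1}$ yields $\chi_{K_0}=\theta_0$ and $(\varphi_i)_{K_{i+1}}=\theta_{i+1}$. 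These are precisely conditions (a)--(b) of Theorem~\ref{caracterización descendente}, whence $\chi\in\irrf G$.

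For (a)$\Rightarrow$(b): starting from the data $(\theta_i,\varphi_i)$ of Theorem~\ref{caracterización descendente}, I would assemble a strong $H$-pair series segment by segment. On a constant-$TH$ segment I would take the interpolating characters to be the restrictions of the segment's top character --- $\chi$ on $G/K_0$ and $\varphi_i$ on $L_i/K_{i+1}$; each such restriction is irreducible, since its further restriction to the bottom of the segment ($\theta_0$, resp. $\theta_{i+1}$) is irreducible, it is $H$-invariant, and it extends to the common group $TH$ by restricting a fixed extension of the top character. On the abelian segment $K_i/L_i$ I would interpolate using Theorem~\ref{key}: for every $H$-invariant $W$ with $L_i\le W\le K_i$ the triple $(WH,W,L_i)$ satisfies the Navarro condition (because $W\cap L_iH\subseteq K_i\cap L_iH=L_i$), so Theorem~\ref{key} supplies a unique $\psi_W\in\irrh{W}$ over $\varphi_i$ that extends to $WH$; these satisfy $\psi_{K_i}=\theta_i$ and $\psi_{L_i}=\varphi_i$ and, by the uniqueness in Theorem~\ref{key}, form a chain of ``lies under'' relations. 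Concatenating all segments gives a strong $H$-pair series with $\chi$ on top, so $\chi$ is an $\mathfrak{F}$-head character.

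I expect the abelian-segment interpolation to be the main obstacle. The danger is a refinement step $W'\lhd W$ inside $K_i/L_i$ with $WH=W'H$, where Theorem~\ref{key} does not apply and a restriction may fail to remain irreducible. The point that removes this is that the Navarro condition forces $K_i\cap H\subseteq K_i\cap L_iH=L_i$; hence for $L_i\le W'<W\le K_i$ one has $W\cap H\le L_i\le W'$, so by Dedekind's law $WH=W'H$ would give $W=W\cap W'H=W'(W\cap H)=W'$, a contradiction. Thus every refinement step inside $K_i/L_i$ is a genuine descent of $TH$, each intermediate triple satisfies the Navarro condition, and Theorem~\ref{key} delivers both the interpolating characters and their extensions. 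I would also point out that this argument uses only Theorem~\ref{eslabon}, Lemma~\ref{MH=UH} and Theorem~\ref{key}, never Theorem~\ref{cor1} or Lemma~\ref{fully}, which explains why no odd-order hypothesis is required.
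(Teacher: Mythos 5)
Your proposal is correct and follows essentially the same route as the paper's proof: both directions rest on the same decomposition into the abelian segments $K_i/L_i$ (handled by Theorem~\ref{key}) and the constant-$TH$ segments $G/K_0$ and $L_i/K_{i+1}$ (handled by restriction and Lemma~\ref{MH=UH}), with Theorem~\ref{eslabon} supplying the strong $H$-pair series on a refinement of the canonical series and Theorem~\ref{caracterización descendente} closing the loop. Your explicit Dedekind-law verification that the intermediate triples $(WH,W,W')$ satisfy the Navarro condition merely spells out what the paper asserts without proof, so it is a welcome but not structurally different addition.
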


\begin{proof}
First we show that (a) implies (b). Suppose $\chi$ is an $\mathfrak{F}'$-character of $G$. By Theorem \ref{caracterización descendente}, there exist characters $\varphi_i \in \irr{L_i}$ and $\theta_i \in \irr{K_i \mid \varphi_i}$ for $i=0,\ldots,m-1$, extending to $L_iH$ and $K_iH$, respectively, such that  
$$
(1,1) = (K_m, \theta_m) \triangleleft (L_{m-1}, \varphi_{m-1}) \triangleleft (K_{m-1}, \theta_{m-1}) \triangleleft \cdots \triangleleft (L_0, \varphi_0) \triangleleft (K_0, \theta_0) \triangleleft (G, \chi),
$$  
where 
$K_0 = G^{\mathfrak{F}}$, $L_0 = K_0'$, $K_i = (L_{i-1}H)^{\mathfrak{F}}$, and $L_i = K_i'$. Moreover, these characters satisfy $(\varphi_i)_{K_{i+1}}=\theta_{i+1}$ for all $i$.

From this series we can obtain an $H$-composition series. We consider now an $H$-composition factor $U/V$ of $G$. If $L_i \leq V < U \leq K_i$, then $(UH, U, V)$ satisfies the Navarro condition with respect to $\mathfrak{F}$. By applying Theorem \ref{key} to each $H$-composition factor between $L_i$ and $K_i$, we obtain $\alpha \in \irrh V$ and $\beta \in \irrh U$ such that they extend irreducibly to $VH$ and $UH$, respectively, and with $(V, \alpha) \lhd (U, \beta)$. 

If instead $K_{i+1} \leq V < U \leq L_i$, we take $\alpha = (\varphi_i)_V \in \irrh V$ and $\beta = (\varphi_i)_U$. These characters clearly extend irreducibly to $UH=VH=K_{i+1}H=L_iH$, and again we have $(V, \alpha) \lhd (U, \beta)$. 

In this way, we construct a strong pair series with $\chi$ as the $\mathfrak{F}$-head character.

\medskip

Now, we show that (b) implies (a). Let $\{S_j\}_{0 \leq j \leq r}$ be an $H$-composition series obtained from a refinement of the series  
$$
G \triangleright K_0 \triangleright L_0 \triangleright \cdots \triangleright K_{m-1} \triangleright L_{m-1} \triangleright K_m = 1
$$  
where 
$K_0 = G^{\mathfrak{F}}$, $L_0 = K_0'$, $K_i = (L_{i-1}H)^{\mathfrak{F}}$, and $L_i = K_i'$.
Since $\chi$ is an $\mathfrak{F}$-head character of $G$, by Theorem \ref{eslabon} we know that $\{S_j\}_{0 \leq j \leq r}$ is associated with a strong $H$-pair series $\{(S_j,\alpha_j)\}_{0 \leq j \leq r}$ such that $\alpha_r=\chi$. Moreover, by repeatedly applying Theorem \ref{MH=UH}, we have $(\alpha_{k+d})_{S_k}=\alpha_k$ whenever $S_k=K_{i+1}$ and $S_{k+d}=L_i$ for some $i \geq 0$, or $S_k=K_0$ and $S_{k+d}=G$. Therefore, by Theorem \ref{caracterización descendente}, $\chi$ is an $\mathfrak{F}'$-character.
\end{proof}

\section{Theorem A} \label{Sección restricciones}

The purpose of this section is to study how $\mathfrak{F}'$-characters of $G$ behave with respect to restriction, when $G$ is a solvable group and $\mathfrak{F}$ is a saturated formation. If $\mathfrak{N} \subseteq \mathfrak{F}$, there is a relationship between the $\mathfrak{F}'$-characters of $G$ and those of $NH$, where $N$ is a normal subgroup of $G$ and $H$ is an $\mathfrak{F}$-projector of $G$.

\begin{proposition}\label{6.1}
Let $\mathfrak{F}$ be a saturated formation such that $\mathfrak{N} \subseteq \mathfrak{F}$, and let $G$ be a solvable group. Suppose that $H$ is an $\mathfrak{F}$-projector of $G$, and assume that $G$ has odd order whenever $\mathfrak{N} \neq \mathfrak{F}$.
Let $K=G^\mathfrak{F}$ be the $\mathfrak{F}$-residual of $G$. Suppose that $M \lhd G$ is such that $K/M$ is a chief factor of $G$.
\begin{itemize}
    \item[(a)] If $\chi \in \irrf G$, then  there exists $\mu \in \irrf{MH}$ lying under $\chi$.
    \item[(b)] If $\mu \in \irrf{MH}$, then there exists $\chi \in \irrf G$ lying over $\mu$.
\end{itemize}
\end{proposition}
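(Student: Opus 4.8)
The plan is to transport $\mathfrak{F}'$-characters across the single abelian chief factor $K/M$, the key observation being that the triple $(G,K,M)$ again satisfies the Navarro condition with respect to $\mathfrak{F}$, so that Theorem~\ref{cor1} and the strong pair series machinery of the previous section become available.

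First I would collect the structural facts. Put $L=K'$. By Remark~\ref{caja} the triple $(G,K,L)$ satisfies the Navarro condition, so $KH=G$ and $K\cap LH=L$. As $K/M$ is a chief factor of the solvable group $G$ it is abelian, whence $L=K'\le M\le K$. By Dedekind's law $MH\cap K=M(H\cap K)$, and $H\cap K\le LH\cap K=L\le M$, so $MH\cap K=M$; together with $KH=G$ this shows that $(G,K,M)$ satisfies the Navarro condition. I would also note that $K/M$ is $H$-simple: any $H$-invariant subgroup $W/M$ of the abelian group $K/M$ is centralised by $K/M$ and normalised by $H$, hence normal in $G/M=(K/M)(MH/M)$, so $W/M$ is $1$ or $K/M$. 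Thus $K/M$ is a single $H$-composition factor lying between the normal subgroups $M$ and $K$ of $G$.

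For (a), let $\chi\in\irrf G$. By Theorem~\ref{5.4} $\chi$ is an $\mathfrak{F}$-head character, so by Theorem~\ref{eslabon} it tops a strong $H$-pair series attached to an $H$-composition series passing through $M$ and $K$; the uniqueness in Lemma~\ref{MH=UH} forces the character at $M$ to be the unique $H$-invariant constituent $\varphi$ of $\chi_M$, which extends irreducibly to $MH$. Since $\chi$ extends $\theta_0:=\chi_K$ and $\theta_0$ is the unique $H$-invariant character of $K$ over $\varphi$ (Theorem~\ref{key}(b) for $(G,K,M)$), Theorem~\ref{cor1}(b) yields an extension $\mu$ of $\varphi$ to $MH$ lying under $\chi$. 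Finally Remark~\ref{remark 3}, applied with $S_i=M$, shows that any extension of $\varphi$ to $MH$---in particular $\mu$---tops a strong $H$-pair series of $MH$, so $\mu$ is an $\mathfrak{F}$-head character and hence $\mu\in\irrf{MH}$ by Theorem~\ref{5.4}. This $\mu$ lies under $\chi$, as required.

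For (b), let $\mu\in\irrf{MH}$. By Theorem~\ref{5.4} it tops a strong $H$-pair series of $MH$ through $M$; since every step from $M$ to $MH$ is in the projector direction ($S_jH=MH$), repeated application of Lemma~\ref{MH=UH} gives that $\varphi:=\mu_M$ is the irreducible $H$-invariant character occurring at $M$, and $\mu$ extends it. Let $\theta_0$ be the unique $H$-invariant character of $K$ over $\varphi$; by Theorem~\ref{key}(c) it extends to $G$ because $\varphi$ extends to $MH$. Theorem~\ref{cor1}(a), applied to the extension $\mu$, produces an extension $\chi\in\irr G$ of $\theta_0$ lying over $\mu$. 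To see $\chi\in\irrf G$, I would take the part of $\mu$'s series up to $(M,\varphi)$, append the single step $(M,\varphi)\triangleleft(K,\theta_0)$ (legitimate since $K/M$ is $H$-simple and $\theta_0$ extends to $KH=G$), obtaining a strong $H$-pair series of $K$ with top $\theta_0$, and then invoke Remark~\ref{remark 3} with $S_i=K$ to complete it to a strong $H$-pair series of $KH=G$ topped by $\chi$. Thus $\chi$ is an $\mathfrak{F}$-head character and $\chi\in\irrf G$ by Theorem~\ref{5.4}.

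The main obstacle is the bookkeeping that glues a strong $H$-pair series of $MH$ to one of $G$ across $K/M$: one must verify that the character forced at each level is the unique $H$-invariant one (so the two series are compatible), that extendibility propagates through the single $H$-composition step---precisely where the Navarro condition for $(G,K,M)$ and Theorem~\ref{key}(c) are used---and that Remark~\ref{remark 3} applies to an arbitrary extension, which holds because such an extension restricts irreducibly to $M$ and therefore remains irreducible at every intermediate level. The odd-order hypothesis is needed only to invoke Theorem~\ref{cor1}.
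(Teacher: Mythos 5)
Your proof is correct and follows essentially the same route as the paper: establish the Navarro condition for $(G,K,M)$, transfer extensions across $K/M$ via Theorem~\ref{cor1}, and splice strong $H$-pair series together (using Theorem~\ref{eslabon}, Lemma~\ref{MH=UH} and Remark~\ref{remark 3}) to conclude membership in $\irrf{MH}$ and $\irrf G$ by Theorem~\ref{5.4}. The only cosmetic difference is that you verify the Navarro condition for $(G,K,M)$ by Dedekind's law, whereas the paper obtains it by applying Theorem~\ref{residual} to $G/M$.
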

\begin{proof}

Let $\chi \in \irrf{G}$. The $\mathfrak{F}$-residual  of $G/M$ is $K/M$, which is abelian, and is therefore complemented by $HM/M$ (by Theorem \ref{residual}). 
That is, $(G,K,M)$ satisfies the Navarro condition with respect to $\mathfrak{F}$ and, by Theorem \ref{cor1}, we have that $\chi_{MH}$ has an irreducible constituent $\mu$ that restricts irreducibly to $M$.

We have that $\varphi = \mu_M$ is the unique $H$-invariant constituent of $(\chi_K)_M$ (by Theorem \ref{key}). Now, we can consider the following strong $H$-pair series, which passes through $K$ and $M$:
$$(1,1)=(S_0,\varphi_0) \triangleleft \cdots \triangleleft (M,\varphi)=(S_m,\varphi_m)\triangleleft (K, \chi_K)=(S_{m+1},\varphi_{m+1}) \triangleleft \cdots \triangleleft (S_r,\varphi_r)=(G,\chi)\,.$$
From Remark \ref{remark 3} and the fact that $\mu$ extends $\varphi$, it follows that $\mu \in \irrf{MH}$, and this proves (a).

Conversely, let $\mu \in \irrf{MH}$. Again by Theorem \ref{cor1}, there exists $\chi \in \irr G$ lying over $\mu$ that restricts irreducibly to $K$, with $\theta = \chi_K$ being the unique irreducible $H$-invariant constituent of $\varphi^K$ where $\varphi = \mu_M$ (by Theorem \ref{key}).

Now, let  $$(1,1)=(S_0,\varphi_0) \triangleleft \cdots \triangleleft (M,\mu)=(S_m,\varphi_m)\triangleleft \cdots \triangleleft (S_l, \varphi_l)=(MH, \mu)$$ be a strong $H$-pair series for $MH$ with $\mu$ as $\mathfrak{F}$-head character. We can find an $H$-composition series for $G$ that passes through $M$ and $K$:
$$1=T_0 \lhd \cdots \lhd T_t=M \lhd T_{t+1} = K \lhd \cdots \lhd T_r=G.$$

We  may then construct a  strong $H$-pair series for $G$ having $\chi$ as the $\mathfrak{F}$-head character and passing through $K$ and $M$ as follows: 
$$(1,1)=(S_0,\varphi_0) \triangleleft \cdots \triangleleft (M,\mu)=(S_m,\varphi_m)\triangleleft (K, \chi_K)=(T_{t+1},\chi_{T_{t+1}}) \triangleleft \cdots \triangleleft (T_r,\chi_{T_r})=(G,\chi)\,.$$ Therefore $\chi \in \irrf G$, and this proves (b).
\end{proof}

This allows us to prove a generalization of Theorem A(b) to saturated formations containing $\mathfrak{N}$, as presented below. Observe that, since this result depends on Proposition \ref{6.1} (and thus Theorem \ref{cor1}), the hypothesis that the group is of odd order whenever $\mathfrak{N}\neq \mathfrak{F}$ is still necessary. The previously mentioned group \texttt{SmallGroup(48,28)} serves as a counterexample.

\begin{thm}\label{NC} Let $\mathfrak{F}$ be a saturated formation such that $\mathfrak{N} \subseteq \mathfrak{F}$, and let $G$ be a solvable group. Suppose $H$ is an $\mathfrak{F}$-projector of $G$, and assume that $G$ has odd order whenever $\mathfrak{N} \neq \mathfrak{F}$. 
If $\chi\in \irrf G$ and $N$ is a normal subgroup of  $G$, then the restriction of $\chi$ to $NH$ has an irreducible constituent that is an $\mathfrak{F}'$-character of $NH$.
\end{thm}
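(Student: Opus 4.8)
The plan is to reduce the statement, via the strong $H$-pair series machinery developed above, to a one-step extension problem that is settled by Theorem~\ref{cor1} together with Lemma~\ref{MH=UH}.

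\emph{First reduction.} Since $N \nor G$ and $H \leq G$, the subgroup $N$ is $H$-invariant, so I would refine $1 \lhd N \lhd G$ to an $H$-composition series $\{T_i\}_{0 \leq i \leq r}$ of $G$ with $T_s = N$ for some $s$. By Theorem~\ref{5.4} the character $\chi$ is an $\mathfrak{F}$-head character, so Theorem~\ref{eslabon} associates to this series a strong $H$-pair series $(1,1)=(T_0,\varphi_0)\lhd\cdots\lhd(T_r,\varphi_r)=(G,\chi)$. Writing $\varphi=\varphi_s\in\irrh N$, this $\varphi$ extends irreducibly to $NH$ and lies under $\chi$. The key observation is that, by Remark~\ref{remark 3} applied at the index $s$, \emph{every} irreducible extension of $\varphi$ to $NH$ is an $\mathfrak{F}$-head character of $NH$, hence (Theorem~\ref{5.4}) an $\mathfrak{F}'$-character of $NH$. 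Thus it suffices to prove the following core claim: for any chain $(N,\varphi)\lhd\cdots\lhd(G,\chi)$ whose consecutive quotients are $H$-composition factors and in which each character extends irreducibly to its product with $H$, the restriction $\chi_{NH}$ admits an irreducible constituent $\eta$ with $\eta_N=\varphi$.

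\emph{Induction.} I would prove the core claim by induction on the length of the chain from $N$ to $G$. If $N=G$, then $\eta=\chi$ works. Otherwise set $T=T_{s+1}$ and $\psi=\varphi_{s+1}$, so that $N\lhd T$ with $T/N$ an $H$-composition factor, and the tail $(T,\psi)\lhd\cdots\lhd(G,\chi)$ is a shorter chain of the same type. By induction $\chi_{TH}$ has an irreducible constituent $\eta_T$ with $(\eta_T)_T=\psi$. It then remains to find a constituent of $(\eta_T)_{NH}$ that restricts to $\varphi$ on $N$; such a character is automatically a constituent of $\chi_{NH}$ by transitivity of restriction. This is a one-step statement inside $TH$, whose $\mathfrak{F}$-projector is again $H$ by Proposition~\ref{basics}(a), and where $T/N$ is an abelian chief factor of $TH$. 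The subgroup $T\cap NH$ is $H$-invariant with $N\leq T\cap NH\leq T$, so by $H$-simplicity of $T/N$ it equals $N$ or $T$. If $T\cap NH=N$, then $(TH,T,N)$ satisfies the Navarro condition with respect to $\mathfrak{F}$ (and $TH$ has odd order when $\mathfrak{N}\neq\mathfrak{F}$), and $\psi$ is the unique $H$-invariant irreducible character of $T$ over $\varphi$ by Theorem~\ref{key}; applying Theorem~\ref{cor1}(b) to the extension $\eta_T$ of $\psi$ yields an extension $\eta\in\irr{NH}$ of $\varphi$ lying under $\eta_T$. If instead $T\cap NH=T$, then $TH=NH$, and the ``moreover'' clause of Lemma~\ref{MH=UH} gives $\psi_N=\varphi$, whence $(\eta_T)_N=\psi_N=\varphi$ and one takes $\eta=\eta_T$.

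The main obstacle is the one-step argument: identifying the correct triple $(TH,T,N)$ for Theorem~\ref{cor1} and verifying the Navarro condition $T\cap NH=N$, while separately disposing of the degenerate possibility $TH=NH$ through Lemma~\ref{MH=UH}. One must also carefully track the lying-under relations so that the character produced at each step stays under $\chi$, and note that the odd-order hypothesis is precisely what permits the invocation of Theorem~\ref{cor1} when $\mathfrak{N}\neq\mathfrak{F}$.
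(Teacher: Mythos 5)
Your proof is correct, but it takes a genuinely different route from the paper's. The paper argues by induction on $|G|$: it picks a chief factor $K/M$ of $G$ just below the $\mathfrak{F}$-residual $K=G^{\mathfrak{F}}$ with $N\cap K\leq M$, invokes Proposition \ref{6.1} to produce a constituent $\mu\in\irrf{MH}$ of $\chi_{MH}$, and then needs a separate group-theoretic argument (showing $MNH\cap K=M$, whence $MH=MNH$, so that $N\nor MH<G$) before the inductive hypothesis can be applied to $MH$. You instead fix one $H$-composition series through $N$ at the outset (via Theorems \ref{5.4} and \ref{eslabon}) and induct along that series, settling each single step inside $TH$ by Theorem \ref{cor1}(b) when $T\cap NH=N$ and by the ``moreover'' clause of Lemma \ref{MH=UH} when $TH=NH$; Remark \ref{remark 3} together with Theorem \ref{5.4} then identifies the extension of $\varphi$ you produce as an $\mathfrak{F}'$-character of $NH$. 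Both arguments ultimately rest on Theorem \ref{cor1}, which is where the odd-order hypothesis enters in either case. The paper's route is shorter on the page because it reuses Proposition \ref{6.1} wholesale; yours bypasses both that proposition and the $MH=MNH$ reduction, and it buys something extra: the constituent you construct is an extension of the unique $H$-invariant constituent $\varphi$ of $\chi_N$, which is precisely the additional content the paper only extracts later, in Theorem \ref{th A general}(c), by combining Theorem \ref{NC} with Proposition \ref{prop B}. One small point to make explicit: when you conclude $T\cap NH\in\{N,T\}$ from $H$-simplicity, you should note that $T\cap NH$ is normal in $T$ (automatic since $T/N$ is abelian, every subgroup between $N$ and $T$ being normal) and $H$-invariant, hence normal in $TH$, so that $H$-simplicity of the chief factor $T/N$ applies; this is the same routine verification carried out at the start of the proof of Lemma \ref{MH=UH}.
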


\begin{proof}
    We argue by induction on $|G|$. Let $K$ be the $\mathfrak{F}$-residual of $G$.  
If $K \leq N$, then $NH = KH = G$, and the claim follows immediately.  

We may assume that   $K \cap N < K$, and choose $M \lhd G$ with $K \cap N \leq M < K$ such that 
$K/M$ is a chief factor of $G$. Since $\chi \in \irrf G$, the restriction 
$\chi_{MH}$ has an irreducible constituent $\mu \in \irrf{MH}$ by Lemma~\ref{6.1}.  

By Remark~\ref{caja}, $K \cap MH = M$ because $K/M$ is abelian. Hence $MH < KH$, otherwise 
$M = K$, a contradiction. 

We claim that $MH = NMH$. Set $\overline{G} = G/N$ and, adopting the bar convention, 
$\overline{K}$ is the $\mathfrak{F}$-residual of $\overline{G}$, and 
$\overline{K}/\overline{M}$ is abelian. Hence, by Proposition~\ref{basics},  
$$\overline{K} \cap \overline{M}\,\overline{H} = \overline{M},$$  
which implies $KN \cap MNH = MN$.  

Assume that $MNH = G$. Then $KN = MN$, and then
$$K = K \cap KN = K \cap MN = M(K \cap N)=M,$$
a contradiction. Thus,  $MNH < G$.  

Observe that $M \leq MNH \cap K \trianglelefteq G$ and $MNH \cap K < K$.
Since $K/M$ is a chief factor of $G$, it follows that  
$$MNH \cap K = M.$$
 
Hence,  
$$MH = (MNH \cap K)H = MNH\cap KH=MNH, $$  
proving the claim. 

 Now, $\mu \in \irrf{MH}$ and $N \trianglelefteq MH$. By the induction hypothesis, 
$\mu_{NH}$ has an irreducible constituent $\psi$ which is an $\mathfrak{F}'$-character of $NH$.  
As $\mu$ is a constituent of $\chi_{MH}$, we conclude that $\psi$ is a constituent of $\chi_{NH}$.  
This completes the proof.   \end{proof}

Consider a normal subgroup $N$ of $G$ and an $\mathfrak{F}$-projector $H$ of $G$, and suppose that either $\mathfrak{F}=\mathfrak{N}$ or $|G|$ is odd. Using the fact that $\mathfrak{F}'$-characters of $NH$ restrict irreducibly to $(NH)^\mathfrak{F}$ (by Theorem \ref{caracterización descendente}(a)) and that, by definition of the $\mathfrak{F}$-residual, $(NH)^\mathfrak{F}$ is contained in $N$, it follows directly from Theorem \ref{NC} that the restriction $\chi_N$ of any $\mathfrak{F}'$-character $\chi$ of $G$ will have an $H$-invariant constituent. However, it is possible to prove this result independently, even for groups of even order and saturated formations strictly containing $\mathfrak{N}$, as we state in Proposition \ref{prop B} below.

\begin{proposition} \label{prop B} 
Let $\mathfrak{F}$ be a saturated formation such that $\mathfrak{N} \subseteq \mathfrak{F}$, and let $G$ be a solvable group. Suppose $H$ is an $\mathfrak{F}$-projector of $G$.
If $\chi \in \irrf G$ and $N$ is a normal subgroup of $G$, then $\chi_N$ has a unique  $H$-invariant constituent.
\end{proposition}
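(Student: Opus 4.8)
The plan is to prove existence of an $H$-invariant constituent first, and then establish uniqueness separately. For existence, I would argue by induction on $|G|$, reducing to a chief factor of $G$ lying inside $N$. Let $K = G^{\mathfrak{F}}$ be the $\mathfrak{F}$-residual. If $K \leq N$ then $NH = G$ and $\chi$ itself is $H$-invariant, so the claim is trivial. Otherwise I would pick $M \lhd G$ with $K \cap N \leq M < K$ and $K/M$ a chief factor of $G$. By Proposition~\ref{6.1}(a), there is $\mu \in \irrf{MH}$ lying under $\chi$. Since $MH$ is a proper subgroup (because $MH < KH = G$ as $K \cap MH = M < K$ by Remark~\ref{caja}), and $N \trianglelefteq MH$, induction applied to $MH$ yields an $H$-invariant constituent of $\mu_N$, which is then an $H$-invariant constituent of $\chi_N$ since $\mu$ lies under $\chi$.

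For uniqueness, the natural route is again induction on $|G|$, passing through the chief factor $K/M$. The character $\theta_0 = \chi_K$ is irreducible (by Theorem~\ref{caracterización descendente}(a)) and hence $H$-invariant; by Theorem~\ref{key} applied to the triple $(G,K,M)$ satisfying the Navarro condition, $\theta_0$ lies over a \emph{unique} $H$-invariant character $\varphi$ of $M$. The key point is that any $H$-invariant constituent $\vartheta$ of $\chi_N$ must determine, and be determined by, a corresponding $H$-invariant constituent downstairs. More precisely, I would want to transfer the uniqueness statement from $G$ to $MH$: if $\vartheta$ is an $H$-invariant constituent of $\chi_N$, I would relate it to the unique $H$-invariant constituent of $\mu_N$ guaranteed by induction on $MH$, where $\mu \in \irrf{MH}$ lies under $\chi$. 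Here I would invoke Theorem~\ref{key} to control the passage between $M$-level and $K$-level characters, using that $\chi_K$ is irreducible and that the $H$-invariant constituent of $\mu_M$ is unique.

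The main obstacle I anticipate is the uniqueness argument, specifically matching up the $H$-invariant constituents of $\chi_N$ with those of $\mu_N$ when $N \not\leq M$, so that the normal subgroup $N$ interacts nontrivially with the chief factor $K/M$. The cleanest way to handle this is probably to reduce to the situation where $N \leq M$: since $K \cap N \leq M$, the product $N$ meets $K$ inside $M$, and I would consider the projection of $N$ and use Clifford theory relative to the chain $N \cap M \leq N$ together with the $H$-action. The subtlety is that two distinct $H$-invariant constituents $\vartheta_1, \vartheta_2$ of $\chi_N$ would, after inducing up or restricting through the series, have to produce two distinct $H$-invariant constituents at a level where the uniqueness from Theorem~\ref{key} (or the inductive hypothesis on $MH$) forbids it. I would make this rigorous by choosing the chief factor so that either $N \leq M$ (reducing directly to the inductive hypothesis on $MH$, since then $N$ is normal in the proper subgroup $MH$ and $\chi_N = (\chi_{MH})_N$ has its $H$-invariant constituents controlled by those of each $\mu$) or by tracking how the finitely many $\mu \in \irrf{MH}$ under $\chi$ all restrict to a common unique $H$-invariant character on $N$, which holds because they differ by linear characters of $MH/M$ that are trivial on the relevant part.

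Finally, I would note that existence together with uniqueness combine to give the full statement, and that, unlike Theorem~\ref{NC}, no parity hypothesis on $|G|$ is needed here because the argument only uses the Navarro condition and Theorem~\ref{key}, both of which are available for all saturated formations with $\mathfrak{N} \subseteq \mathfrak{F}$ regardless of the order of $G$.
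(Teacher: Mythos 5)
Your proposal has a genuine gap, and it is exactly the point the paper is careful about. Proposition \ref{prop B} carries no parity hypothesis: it is asserted for every solvable $G$ and every saturated formation $\mathfrak{F}\supseteq\mathfrak{N}$. Your existence argument, however, invokes Proposition \ref{6.1}(a) to produce $\mu \in \irrf{MH}$ under $\chi$, and Proposition \ref{6.1} explicitly assumes that $G$ has odd order whenever $\mathfrak{N} \neq \mathfrak{F}$ (it rests on Theorem \ref{cor1}, hence on Lemma \ref{fully}, where the paper notes via \texttt{SmallGroup(48,28)} that the parity hypothesis cannot be dropped). So your closing claim that ``no parity hypothesis is needed because the argument only uses the Navarro condition and Theorem \ref{key}'' is not true of the argument you actually wrote: as it stands, it proves the proposition only when $|G|$ is odd or $\mathfrak{F}=\mathfrak{N}$, which defeats the stated purpose of the proposition (the paper introduces it precisely as a parity-free alternative to deducing existence from Theorem \ref{NC}). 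The paper's parity-free route for existence is different: take an $H$-composition series of $G$ passing through $N$; by Theorems \ref{eslabon} and \ref{5.4}, which require no parity assumption, this series is associated with a strong $H$-pair series ending at $\chi$, and the character sitting at level $N$ in that series is a constituent of $\chi_N$ that extends to $NH$, hence is $H$-invariant. (A smaller gap of the same kind: you assert $N \trianglelefteq MH$ without proof; this is true but needs the argument from the proof of Theorem \ref{NC}, namely that $MNH \cap K = M$ and hence $MH = MNH$.)

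The uniqueness half is also incomplete as written. You correctly identify the matching of $H$-invariant constituents of $\chi_N$ with constituents of $\mu_N$ as ``the main obstacle,'' but neither of your proposed fixes closes it: the reduction to $N \leq M$ is impossible whenever $N \not\leq K$ (you only know $K \cap N \leq M$), and the claim that the various $\mu \in \irrf{MH}$ under $\chi$ ``differ by linear characters of $MH/M$'' is essentially Theorem \ref{th A general}(c), which again carries the parity hypothesis; moreover, an $H$-invariant constituent of $\chi_N$ need not lie under any $\mathfrak{F}'$-character of $MH$ lying under $\chi$, so the inductive hypothesis on $MH$ does not directly see it. The paper avoids all of this: uniqueness holds for \emph{every} $\chi \in \irr G$ (membership in $\irrf G$ plays no role) by a short Clifford-theoretic argument adapted from Isaacs. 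If $\alpha,\beta$ are $H$-invariant constituents of $\chi_N$, then $\beta = \alpha^g$ for some $g \in G$; since $H \leq I_G(\alpha)$, $H \leq I_G(\beta)$ and $H^g \leq I_G(\beta) = I_G(\alpha)^g$, both $H$ and $H^g$ are $\mathfrak{F}$-projectors of $I_G(\beta)$ by Proposition \ref{basics}(a), so $H = H^{gx}$ for some $x \in I_G(\beta)$ by Proposition \ref{basics}(c); then $gx \in \norm G H = H$ by Proposition \ref{basics}(e), and $\alpha = \alpha^{gx} = \beta^x = \beta$. You should replace your inductive uniqueness sketch with this argument: it is both parity-free and induction-free.
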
 

\begin{proof} We can find an $H$-composition series that passes through $G$ via $N$. Consider
$$1=S_0 \lhd \cdots \lhd S_m = N \lhd \cdots \lhd S_r=G\,.$$
By Theorem \ref{eslabon} and Theorem \ref{5.4}, this series is associated with a strong $H$-pair series $\{(S_i,\theta_i)\}_{0 \leq i \leq r}$ such that $\theta_r=\chi$. In particular, $\theta_m$ is a constituent of $\chi_N$ which extends to $NH$. Therefore,  it is $H$-invariant, which proves the existence of an $H$-invariant constituent. 

Uniqueness follows from an argument analogous to Isaacs' one in Lemma 3.5 of \cite{Isaacs}, which we adapt here to our context. Suppose $\alpha,\beta$ are $H$-invariant constituents of $\chi_N$. Then we may write $\beta=\alpha^g$ for some $g \in G$. We write $T=I_G(\alpha)$ the inertia subgroup. Then $H \leq T$ since $\alpha$ is $H$-invariant, and $H^g\leq T^g$. Since $\beta$ is also $H$-invariant and $I_G(\beta)=T^g$, we also have $H \leq T^g$. Now, by Proposition \ref{basics}(a) and since $\mathfrak{F}$-projectors form a conjugacy class, both $H$ and $H^g$ are $\mathfrak{F}$-projectors of $T$, hence $H=H^{gx}$ for some $x \in T^g$ by Proposition \ref{basics}(c). Then $gx \in \norm G H = H \leq T$, and thus $\alpha=\alpha^{gx}=\beta^x=\beta$, since $x \in T^g=I_G(\beta)$.
\end{proof}

We are now able to prove the results stated in the introduction. Considering $\mathfrak{F}=\mathfrak{N}$ yields Theorem A as it was formulated.

\begin{thm} \label{th A general}
Let $\mathfrak{F}$ be a saturated formation such that $\mathfrak{N} \subseteq \mathfrak{F}$, and let $G$ be a  solvable group. Suppose $H$ is an $\mathfrak{F}$-projector of $G$. Let $N$ be any normal subgroup of $G$. Then the following hold.
\begin{itemize}
\item[(a)] The restriction $\chi_N$ contains a unique $H$-invariant irreducible constituent $\theta$.
\item[(b)] We have that $\theta(1)$ divides $\chi(1)$ and that $\chi(1)/\theta(1)$ divides $|G:NH|$.
\item[(c)] If, moreover, $G$ has odd order whenever $\mathfrak{N}\neq \mathfrak{F}$, then the restriction $\chi_{NH}$ contains an $\mathfrak{F}'$-character $\gamma$ of $NH$. Furthermore, $\gamma_N=\theta$. Hence, any other $\mathfrak{F}'$-character of $NH$ contained in $\chi_{NH}$ is of the form $\lambda \gamma$, for some linear $\lambda \in \irr{NH/N}$.
\end{itemize}
\end{thm}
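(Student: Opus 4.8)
The plan is to read off (a) directly, to obtain the degree statement (b) by tracking degrees along a strong $H$-pair series passing through $N$, and to establish (c) from Theorem \ref{NC} together with Gallagher's theorem.

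For (a) I would simply invoke Proposition \ref{prop B}: since $\chi \in \irrf G$ and $N \nor G$, the restriction $\chi_N$ has a unique $H$-invariant irreducible constituent, which I name $\theta$.

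For (b), first choose an $H$-composition series $1 = S_0 \lhd \cdots \lhd S_m = N \lhd \cdots \lhd S_r = G$ passing through $N$. By Theorem \ref{5.4}, $\chi$ is an $\mathfrak{F}$-head character, so by Theorem \ref{eslabon} this series is associated with a strong $H$-pair series $\{(S_i,\theta_i)\}$ with $\theta_r = \chi$. The character $\theta_m \in \irr N$ is $H$-invariant (it extends to $S_mH = NH$) and lies under $\chi$, so by the uniqueness in (a) we have $\theta_m = \theta$. It then remains to telescope the degrees from $\theta_m(1) = \theta(1)$ to $\theta_r(1) = \chi(1)$, distinguishing two cases at each step. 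If $S_{i+1}H = S_iH$, then Lemma \ref{MH=UH} gives $(\theta_{i+1})_{S_i} = \theta_i$, so $\theta_{i+1}(1)/\theta_i(1) = 1 = |S_{i+1}H : S_iH|$. If instead $S_{i+1}H > S_iH$, then (as in the proof of Lemma \ref{5.2}) the triple $(S_{i+1}H, S_{i+1}, S_i)$ satisfies the Navarro condition, so $S_{i+1} \cap S_iH = S_i$; this forces $S_{i+1} \cap H = S_i \cap H$ and hence $|S_{i+1}H : S_iH| = |S_{i+1} : S_i|$, while the standard Clifford degree bound for the constituent $\theta_i$ of $(\theta_{i+1})_{S_i}$ gives $\theta_{i+1}(1)/\theta_i(1) \mid |S_{i+1} : S_i|$. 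In both cases $\theta_i(1) \mid \theta_{i+1}(1)$ and $\theta_{i+1}(1)/\theta_i(1) \mid |S_{i+1}H : S_iH|$. Taking the product over $m \le i \le r-1$ telescopes $\prod |S_{i+1}H : S_iH|$ to $|S_rH : S_mH| = |G : NH|$, yielding both $\theta(1) \mid \chi(1)$ and $\chi(1)/\theta(1) \mid |G : NH|$.

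For (c), assume the order hypothesis. By Theorem \ref{NC} the restriction $\chi_{NH}$ has a constituent $\gamma \in \irrf{NH}$. Since $NH/N$ is nilpotent we have $(NH)^{\mathfrak F} \le N$, so by Theorem \ref{caracterización descendente}(a) the character $\gamma$ restricts irreducibly to $(NH)^{\mathfrak F}$ and therefore to $N$; thus $\gamma_N$ is irreducible, and it is $H$-invariant and a constituent of $\chi_N$, whence $\gamma_N = \theta$ by (a). In particular $\gamma$ extends the $NH$-invariant character $\theta$, so Gallagher's theorem identifies $\irr{NH \mid \theta}$ with $\{\lambda\gamma : \lambda \in \irr{NH/N}\}$. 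Any further $\mathfrak{F}'$-character $\gamma'$ of $NH$ lying under $\chi$ satisfies $\gamma'_N = \theta$ by the same argument, so $\gamma' = \lambda\gamma$ for some $\lambda \in \irr{NH/N}$; restricting to $N$ gives $\theta = \gamma'_N = \lambda(1)\theta$, forcing $\lambda(1) = 1$, i.e. $\lambda$ is linear.

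The main obstacle is part (b): the degree bookkeeping hinges on the case split according to whether $S_{i+1}H = S_iH$, the delicate point being that only in the equality case does Lemma \ref{MH=UH} collapse the degree ratio to $1$ (matching $|S_{i+1}H:S_iH| = 1$), whereas in the strict case one must first verify $|S_{i+1}H : S_iH| = |S_{i+1} : S_i|$ before the Clifford bound applies; arranging these two indices to agree at every step is exactly what makes the telescoping close up to $|G:NH|$.
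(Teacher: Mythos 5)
Your proposal is correct and follows essentially the same route as the paper's proof: (a) by citing Proposition \ref{prop B}, (b) by telescoping degrees along a strong $H$-pair series through $N$ using exactly the same dichotomy ($S_{i+1}H = S_iH$ handled by Lemma \ref{MH=UH}, the strict case by $S_{i+1}\cap S_iH = S_i$ giving $|S_{i+1}H:S_iH| = |S_{i+1}:S_i|$ plus the Clifford bound), and (c) via Theorem \ref{NC}, uniqueness from (a), and Gallagher. One justification in (c) needs fixing: $NH/N$ need not be nilpotent when $\mathfrak{N} \neq \mathfrak{F}$ (e.g.\ $H$ supersolvable but not nilpotent); the correct reason that $(NH)^{\mathfrak{F}} \leq N$ is that $NH/N \cong H/(H\cap N)$ lies in $\mathfrak{F}$, since formations are closed under quotients and $H \in \mathfrak{F}$.
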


\begin{proof}
Assertion (a) is given by Proposition \ref{prop B}. Furthermore, consider the strong $H$-pair series $\{(S_i,\theta_i)\}_{0 \leq i \leq r}$ passing through $S_m=N$ where $\theta_m=\theta$ and $\theta_r=m$. For each $1 \leq i \leq r$, we have that $S_i/S_{i-1}$ is an $H$-composition factor of $G$ and thus $S_i \cap S_{i-1}H$ is either $S_{i-1}$ or $S_{i}$. If $S_i \cap S_{i-1}H=S_i$, then $S_i H =S_{i-1}H$ and by Theorem \ref{MH=UH} we have $\theta_i(1)=\theta_{i-1}(1)$. If  $S_i \cap S_{i-1}H=S_{i-1}$, then, since $\theta_i$ lies over $\theta_{i-1}$, we have $\theta_i(1)=e \theta_{i-1}(1)$ for some integer $e$ dividing $|S_i : S_{i-1}|$. But $S_iH/S_i \cong S_{i-1}H/S_{i-1}$ and thus $|S_i H : S_{i-1}H| = |S_i : S_{i-1}|$. By reasoning iteratively, it follows that $\theta(1)$ divides $\chi(1)$ and $\chi(1) / \theta(1)$ divides $|G:NH|$, and (b) is proved. 

Suppose for (c) that $|G|$ is odd whenever $\mathfrak{F} \neq \mathfrak{N}$. By Theorem \ref{NC}, $\chi_{NH}$ contains at least some $\gamma \in \irrf{NH}$ as a constituent. Since $(NH)^\mathfrak{F}\leq N$, we know that any $\mathfrak{F}'$-character of $NH$ restricts irreducibly to $N$. In particular, if $\delta$ is any $\mathfrak{F}'$-character of $NH$ lying under $\chi$, then $\delta_N$ is an $H$-invariant irreducible constituent of $\chi_N$ and thus is $\theta$ by uniqueness. That is, $\gamma_N=\theta$ and any other $\delta \in \irrf {NH}$ lying under $\chi$ is also an extension of $\theta$ to $NH$. By Gallagher's theorem, it follows that every such $\delta$
 is of the form $\lambda \gamma$ for some $\lambda \in \text{Lin}(NH/H)$. We may observe that, under the conditions of (c), we have $\gamma(1)=\theta(1)$ and thus $\chi(1)/\gamma(1)$ divides $|G:H|$.
\end{proof}

\section{Theorem B}

In this final section, we prove Theorem B in the more general context of saturated formations containing all nilpotent groups. This is formulated in Theorem \ref{kernels}. We first present some preliminary results.

\begin{lemma} \label{lema aux ker}
Let $\mathfrak{F}$ be a saturated formation with $\mathfrak{N} \subseteq \mathfrak{F}$. Let $G$ be a solvable group, and let $H$ be an $\mathfrak{F}$-projector of $G$. Let $M,T$ be $H$-invariant subgroups of $G$ such that $T \lhd M$ and $M/T$ is $H$-simple. Suppose that $M \cap H \subseteq H'$. If $\varphi$ is an irreducible $H$-invariant character of $M$ with $T$ in its kernel that extends to $MH$, then $\varphi$ is the trivial character.
\end{lemma}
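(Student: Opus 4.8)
The plan is to pass to the quotient $MH/T$ and force a contradiction from the assumption $\varphi \neq 1_M$. First I would set up the ambient normality. Since $M$ and $T$ are $H$-invariant with $T \lhd M$, both are normal in $MH$, and $M/T$ is an abelian (indeed elementary abelian) chief factor of $MH$. Because $T \subseteq \ker{\varphi}$ and $M/T$ is abelian, the irreducible character $\varphi$ factors through $M/T$ and is therefore linear; hence the given extension $\hat\varphi \in \irr{MH}$ of $\varphi$ is also linear. Consequently $MH/\ker{\hat\varphi}$ is cyclic, so $(MH)' \subseteq \ker{\hat\varphi}$, and of course $T \subseteq \ker{\hat\varphi}$.

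Next I would locate $\ker{\hat\varphi} \cap M$. This subgroup is normal in $MH$, hence $H$-invariant, and it satisfies $T \subseteq \ker{\hat\varphi} \cap M \subseteq M$; by the $H$-simplicity of $M/T$ it must equal either $M$ or $T$. If it equals $M$, then $M \subseteq \ker{\hat\varphi}$ and so $\varphi = 1_M$, which is the desired conclusion. I would therefore assume for contradiction that $\ker{\hat\varphi} \cap M = T$ and extract two facts from $(MH)' \subseteq \ker{\hat\varphi}$. On one hand $[M,H] \subseteq (MH)' \cap M \subseteq T$, so $H$ centralizes $M/T$; together with the fact that $M/T$ is abelian this makes $M/T$ central in $MH/T$. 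On the other hand—and this is where the hypothesis enters—$M \cap H \subseteq H' \subseteq (MH)' \subseteq \ker{\hat\varphi}$, so $M \cap H \subseteq \ker{\hat\varphi} \cap M = T$.

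With these two facts I would decompose $MH/T$. Since $M \cap H \subseteq T$ and $T \lhd MH$ (so $HT = TH$), the modular law gives $M \cap HT = T(M \cap H) = T$, whence $HT/T$ intersects $M/T$ trivially while $(M/T)(HT/T) = MH/T$; because $M/T$ is central, $HT/T$ is normal, and we obtain the internal direct product $MH/T = (M/T) \times (HT/T)$. Now $M/T$ is elementary abelian, hence nilpotent, hence in $\mathfrak{F}$ (as $\mathfrak{N} \subseteq \mathfrak{F}$), while $HT/T$ is a quotient of the $\mathfrak{F}$-group $H$ and so also lies in $\mathfrak{F}$. Applying the defining formation property to the two coordinate projections of this direct product (their kernels meet trivially) yields $MH/T \in \mathfrak{F}$.

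Finally I would derive the contradiction. By Proposition \ref{basics}(a), $H$ is an $\mathfrak{F}$-projector of $MH$, so by Proposition \ref{basics}(b), $HT/T$ is an $\mathfrak{F}$-projector of $MH/T$. But a group that lies in $\mathfrak{F}$ is its own unique $\mathfrak{F}$-projector, so $HT/T = MH/T$, i.e. $M \subseteq HT$, and then $M = M \cap HT = T$, contradicting the nontriviality of the chief factor $M/T$. Hence the case $\ker{\hat\varphi} \cap M = T$ cannot occur and $\varphi = 1_M$. I expect the crux to be the middle step: manufacturing the direct-product decomposition of $MH/T$, which succeeds precisely because linearity of $\hat\varphi$ supplies the centrality of $M/T$ while the hypothesis $M \cap H \subseteq H'$ supplies the containment $M \cap H \subseteq T$. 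Once both ingredients are present, the $\mathfrak{F}$-projector argument closes the proof immediately.
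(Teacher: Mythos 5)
Your proof is correct, but it takes a genuinely different route from the paper's. The paper stays inside its character-correspondence framework: it sets $U = T(H \cap M)$, observes that $(MH, M, U)$ satisfies the Navarro condition, and applies Theorem~\ref{key} --- part (a) gives a unique $\xi \in \irrh{U}$ under $\varphi$, part (c) extends $\xi$ to a linear character $\eta$ of $UH = TH$, linearity forces $H'T \subseteq \ker{\eta}$ and hence $\xi = 1_U$ (using $U = T(H\cap M) \subseteq H'T$, which is where the hypothesis $M \cap H \subseteq H'$ enters), and the uniqueness clause of the correspondence in Theorem~\ref{key} then pins down $\varphi = 1_M$. You avoid Theorem~\ref{key} altogether: after the shared observation that $\varphi$ and its extension $\hat\varphi$ are linear, your argument is pure group theory --- if $\varphi \neq 1_M$, then $\ker{\hat\varphi} \cap M = T$ by $H$-simplicity, whence $[M,H] \subseteq T$ and $M \cap H \subseteq T$, giving the internal direct product $MH/T = (M/T) \times (HT/T)$ of two $\mathfrak{F}$-groups; the formation axiom applied to the two coordinate projections puts $MH/T$ in $\mathfrak{F}$, contradicting the $\mathfrak{F}$-maximality of the projector image $HT/T$ (Proposition~\ref{basics}(a),(b)), since that maximality would force $M = T$. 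What the paper's route buys is economy within its own ecosystem, as Theorem~\ref{key} is the workhorse of the whole paper and is reused here at no extra cost. What your route buys is independence from that machinery (Navarro's Theorem 3.5 is not needed at all), plus transparency: it isolates exactly how each hypothesis acts --- the extendibility of $\varphi$ supplies $(MH)' \subseteq \ker{\hat\varphi}$, and $M \cap H \subseteq H'$ converts that into $M \cap H \subseteq T$, which is precisely what makes the direct-product decomposition, and then the formation argument, go through. Both proofs are complete; yours is arguably the more elementary and self-contained.
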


\begin{proof}
Set $U=T(H \cap M)$. Since $M/T$ is $H$-simple, it is abelian, and thus $M/U$ is also abelian. Notice too that $H$ is an  $\mathfrak{F}$-projector of $MH$. Let $\varphi \in \irrh M$ that extends to $MH$ and has $T$ in its kernel. By applying Theorem \ref{key}(a) to $(MH,M,U)$, we know that there exists a unique $\xi \in \irrh {U}$ lying under $\varphi$. Moreover, $\xi$ extends to some $\eta \in \irr{TH}$ by Theorem \ref{key}(c).

Notice that, since $M/T$ is abelian, $\varphi$ is linear, and thus so are $\xi$ and $\eta$. Then $\eta$ can be seen as a character in $\text{Lin}(TH/T)$ with $H'T/T$ in its kernel. Thus, $U \subseteq H'T \subseteq \ker \eta$. Hence, $\xi=\eta_{ U}=1_{U}$. Since $\varphi$ is by Theorem \ref{key} (a) the unique $H$-invariant character lying over $\xi$, it must be $\varphi=1_M$.
\end{proof}

\begin{lemma}\label{ker} Let $\mathfrak{F}$ be a saturated formation with $\mathfrak{N} \subseteq \mathfrak{F}$. Let $G$ be a solvable group, and let $H$ be an $\mathfrak{F}$-projector of $G$. Suppose that  $N$ is a normal subgroup of $G$ such that $N\cap H\sbs H'$. Then $N\sbs \ker \chi$  for all $\chi \in \irrf G$. 

\end{lemma}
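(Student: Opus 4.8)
The plan is to prove that $N \subseteq \ker\chi$ for every $\chi \in \irrf G$ by reducing to the situation analyzed in Lemma \ref{lema aux ker}, working through an $H$-composition series and tracking kernels along a strong $H$-pair series. First I would invoke Proposition \ref{basics} and Lemma \ref{hup} to record the stability properties we need: if $N \cap H \subseteq H'$, then the same containment is inherited by the relevant $H$-composition factors of $N$. Concretely, refine the series through $N$ to an $H$-composition series $1 = S_0 \lhd S_1 \lhd \cdots \lhd S_m = N \lhd \cdots \lhd S_r = G$ of $G$, and by Theorem \ref{eslabon} together with Theorem \ref{5.4}, since $\chi$ is an $\mathfrak{F}'$-character (equivalently an $\mathfrak{F}$-head character), this series carries a unique strong $H$-pair series $\{(S_i,\theta_i)\}_{0 \le i \le r}$ with $\theta_r = \chi$, in which each $\theta_i$ is $H$-invariant and extends irreducibly to $S_iH$.

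The heart of the argument is to show by induction on $i$ (for $0 \le i \le m$) that $\theta_i = 1_{S_i}$, i.e. that the restriction $\chi_N$ starts from the trivial character all the way up through $N$. The base case $\theta_0 = 1$ is immediate. For the inductive step, assume $\theta_{i} = 1_{S_i}$; then $\theta_{i+1} \in \irrh{S_{i+1}}$ lies over $1_{S_i}$, hence has $S_i$ in its kernel, and extends irreducibly to $S_{i+1}H$. Here the key point is that $S_{i+1}/S_i$ is an $H$-composition factor of $G$ inside $N$, so $S_{i+1} \cap H \subseteq N \cap H \subseteq H'$. This places us exactly in the hypotheses of Lemma \ref{lema aux ker} with $M = S_{i+1}$ and $T = S_i$: an $H$-invariant irreducible character of $S_{i+1}$ containing $S_i$ in its kernel and extending to $S_{i+1}H$ must be trivial. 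Thus $\theta_{i+1} = 1_{S_{i+1}}$, completing the induction and giving $\theta_m = 1_N$, so that $\chi_N$ has the trivial character of $N$ as a constituent.

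Finally, to pass from ``$1_N$ is a constituent of $\chi_N$'' to ``$N \subseteq \ker\chi$'', I would use that by Proposition \ref{prop B} the restriction $\chi_N$ has a \emph{unique} $H$-invariant irreducible constituent; since $\theta_m = 1_N$ is $H$-invariant and is a constituent, it is \emph{the} $H$-invariant constituent. By Clifford theory all constituents of $\chi_N$ are $G$-conjugate, hence $H$-conjugate in the relevant sense, and the uniqueness of the $H$-invariant constituent forces every constituent of $\chi_N$ to coincide with $1_N$. Therefore $\chi_N = \chi(1) \cdot 1_N$, which means precisely $N \subseteq \ker\chi$.

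I expect the main obstacle to be the verification that $S_{i+1} \cap H \subseteq H'$ holds for each composition factor, since $N \cap H \subseteq H'$ only directly controls $N \cap H$; one must argue that $S_{i+1} \cap H \subseteq N \cap H$ using $S_{i+1} \subseteq N$, which is clear, but care is needed to confirm that the $S_i$ genuinely form an $H$-composition series with the $S_{i+1}/S_i$ being $H$-simple inside $N$, so that Lemma \ref{lema aux ker} applies verbatim. A secondary subtlety is ensuring that the final Clifford-theoretic step correctly uses the uniqueness from Proposition \ref{prop B} rather than merely asserting triviality of one constituent.
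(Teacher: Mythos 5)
Your proof is correct and follows essentially the same route as the paper's: associate a strong $H$-pair series passing through $N$ via Theorems \ref{eslabon} and \ref{5.4}, then apply Lemma \ref{lema aux ker} inductively along the $H$-composition factors inside $N$ (using $S_{i+1}\cap H\subseteq N\cap H\subseteq H'$) to conclude $\theta_m=1_N$. The only difference is your last step, where the appeal to Proposition \ref{prop B} and ``$H$-conjugacy'' is superfluous: once $1_N$ is a constituent of $\chi_N$, Clifford's theorem alone gives $N\subseteq\ker\chi$, since every $G$-conjugate of the trivial character is trivial, and this is exactly how the paper concludes.
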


\begin{proof} Let $\chi \in \irrf G$. By Theorem \ref{5.4} and Theorem \ref{eslabon}, there exists a strong $H$-pair series of the form
$$(1,1)=(S_0,\varphi_0) \triangleleft \cdots \triangleleft (S_m,\varphi_m)=(N,\varphi_m) \triangleleft \cdots \triangleleft (S_r,\varphi_r)=(G,\chi).$$
We have that $S_i$ and $S_{i-1}$ are $H$-invariant, $S_i/S_{i-1}$ is $H$-simple and $S_i \cap H \subseteq H'$ for $i=1,\hdots,m$. Moreover, $\varphi_0=1_{S_0}$. Then, by repeatedly applying Lemma \ref{lema aux ker}, we deduce that $\varphi_i=1_{S_i}$ for every $0 \leq i \leq m$. In particular, $\varphi_m = 1_N$. Since $N \lhd G$ and $\varphi_m$ is a constituent of $\chi_N$, we have $N \subseteq \ker \chi$.
\end{proof}

If $N$ is a normal subgroup of $G$, every irreducible character $\tilde{\chi}$ of the quotient group $G/N$ can be regarded, by inflation, as a character of $G$ whose kernel contains $N$. In this case, we denote the corresponding character in $\irr G$ by $\chi$, where $N \subseteq \ker\chi$, and vice versa.

\medskip

\begin{lemma} Let $\mathfrak{F}$ be a saturated formation with $\mathfrak{N} \subseteq \mathfrak{F}$.  Suppose that  $N$ is a normal subgroup  of a solvable group $G$. 
\begin{itemize}
    \item[(a)] If $\chi \in \irrf G$ such that $N \subseteq \ker \chi$, then $\tilde \chi \in \irrf{G/N}$.
    \item[(b)] If $\tilde{\chi} \in \irrf{G/N}$, then $\chi \in \irrf G$.
\end{itemize}
\end{lemma}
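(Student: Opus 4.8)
The plan is to prove both directions using the descending characterization of $\mathfrak{F}'$-characters given in Theorem~\ref{caracterización descendente}, together with the good behaviour of the canonical subgroups $K_i, L_i$ under passage to the quotient $G/N$. The central difficulty will be the compatibility of the two subnormal series: the one built inside $G$ from $K_0=G^\mathfrak{F}$, $L_0=K_0'$, $K_i=(L_{i-1}H)^\mathfrak{F}$, and the analogous one built inside $\overline{G}=G/N$ from $\overline{G}^\mathfrak{F}$ and the image $\overline{H}=HN/N$ of the projector. By Proposition~\ref{basics}(b), $\overline{H}$ is an $\mathfrak{F}$-projector of $\overline{G}$, and by the usual behaviour of residuals under quotients we have $\overline{G}^\mathfrak{F}=G^\mathfrak{F}N/N=K_0N/N$. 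The key technical point to establish is that the entire chain of subgroups defining the $\mathfrak{F}'$-characters of $\overline{G}$ is precisely the image modulo $N$ of the chain $K_i, L_i$ in $G$; this is where one must argue carefully, since $N$ need not be contained in any of the $L_i$.

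For part~(b), assume $\tilde\chi\in\irrf{G/N}$. First I would fix the defining data for $G/N$: characters $\tilde\varphi_i\in\irr{\overline{L_i}}$ and $\tilde\theta_i\in\irr{\overline{K_i}\mid\tilde\varphi_i}$ extending appropriately, arranged in the descending pair series of Theorem~\ref{caracterización descendente}. Then I would inflate everything back to $G$: each $\tilde\theta_i,\tilde\varphi_i$ pulls back to a character of the preimage subgroup $K_iN$, $L_iN$ having $N$ in its kernel. The main work is to check that the inflated characters satisfy conditions (a) and (b) of Theorem~\ref{caracterización descendente} \emph{for $G$ itself}, i.e.\ with respect to the genuine subgroups $K_i,L_i$ rather than their preimages. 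The decisive observation is that restriction from $K_iN/N$ to $K_i/(K_i\cap N)$ identifies, via the isomorphism $K_i/(K_i\cap N)\cong K_iN/N$, the inflated character with an irreducible character of $K_i$; irreducibility of $\chi_{K_0}$ and of the intermediate restrictions is preserved because these isomorphisms carry irreducible characters to irreducible characters, and the extension properties to $K_iH$, $L_iH$ descend from those over $K_iHN/N$, $L_iHN/N$ using that $\overline{H}$-extension lifts to $H$-extension when $N$ lies in the relevant kernel. This yields that $\chi$, the inflation of $\tilde\chi$, satisfies the descending characterization, hence $\chi\in\irrf G$.

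For part~(a), assume $\chi\in\irrf G$ with $N\subseteq\ker\chi$. Here I would run the descending series for $\chi$ provided by Theorem~\ref{caracterización descendente} and \emph{project} it modulo $N$. Each $\theta_i,\varphi_i$ has a kernel that, after intersecting with $N$, descends to a well-defined irreducible character of $K_i/(K_i\cap N)\cong K_iN/N$; the condition $N\subseteq\ker\chi$ forces $\theta_0=\chi_{K_0}$ to have $K_0\cap N$ in its kernel, and inductively the same holds at every stage because each $\theta_{i+1}=(\varphi_i)_{K_{i+1}}$ and restriction respects kernels. The images then assemble into a descending pair series for $\overline{G}$ relative to the canonical chain $\overline{K_i},\overline{L_i}$, provided one verifies that this image chain coincides with the intrinsic chain computed inside $\overline{G}$ — precisely the compatibility claim flagged above. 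Granting that, $\tilde\chi$ satisfies Theorem~\ref{caracterización descendente} in $\overline{G}$ and so $\tilde\chi\in\irrf{\overline{G}}$.

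I expect the main obstacle to be the bookkeeping identity $\overline{(L_{i-1}H)^\mathfrak{F}}=(\overline{L_{i-1}}\,\overline{H})^\mathfrak{F}$ and the analogous statement for derived subgroups, i.e.\ that forming residuals, derived subgroups, and images modulo $N$ all commute along this particular chain. This should follow from the standard fact that residuals commute with quotients together with Proposition~\ref{basics}(b), but it requires a clean inductive statement so that the lengths $m$ of the two series agree and the pairs match term by term. Once this compatibility is in place, both implications reduce to the observation that inflation and its inverse set up a kernel-preserving bijection between irreducible characters of $G$ containing $N$ in their kernel and irreducible characters of $G/N$, under which the defining conditions of Theorem~\ref{caracterización descendente} correspond.
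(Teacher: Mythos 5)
Your proposal takes a genuinely different route from the paper (the paper argues via strong $H$-pair series, not via Theorem~\ref{caracterización descendente}), but as written it has a real gap, concentrated in part~(a): the transfer of the \emph{extension} conditions from $G$ to $\overline{G}=G/N$. Inflation works in one direction only. For part~(b) your argument is fine: an extension of $\tilde\theta_i$ to $\overline{K_i}\,\overline{H}=K_iHN/N$ pulls back along the surjection $K_iHN\to K_iHN/N$ and restricts to $K_iH$ to give an extension of $\theta_i$, since its restriction to $K_i$ is already irreducible. But in part~(a) you need the converse: from the hypothesis that $\theta_i$ extends to $K_iH$ you must produce an extension of $\tilde\theta_i$ to $\overline{K_i}\,\overline{H}\cong K_iH/(K_iH\cap N)$, i.e.\ an extension $\eta$ of $\theta_i$ to $K_iH$ with $K_iH\cap N\subseteq\ker\eta$. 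The canonical subgroups $K_i,L_i$ need not contain $N$, so all you know about an arbitrary extension $\eta$ is that $\ker\eta\cap K_i=\ker\theta_i\supseteq K_i\cap N$; nothing forces the possibly larger group $K_iH\cap N$ into $\ker\eta$, and Gallagher's theorem (extensions differ by ${\rm Lin}(K_iH/K_i)$) does not obviously supply one with the right kernel. Your proposal is silent on this point, yet without it the projected pairs fail to satisfy condition (b) of Theorem~\ref{caracterización descendente} in $\overline{G}$. (The statement you need is in fact true, but only \emph{a posteriori}, once the lemma itself is known; invoking that would be circular.)

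Two further remarks. First, the bookkeeping identity you flag is true --- the canonical chain of $\overline{G}$ is the image of the chain of $G$ --- but your hoped-for conclusion that ``the lengths $m$ of the two series agree and the pairs match term by term'' is false: for $\mathfrak{F}=\mathfrak{N}$, $G=\mathfrak{S}_3$ and $N=G^{\mathfrak{N}}\cong C_3$, the chain of $G$ has length $1$ while $G/N\in\mathfrak{N}$ has chain of length $0$. The quotient chain stops as soon as $K_j\leq N$; in (b) one must pad with trivial characters below that point (using $K_{m'}\subseteq N\cap L_{m'-1}\subseteq\ker\varphi_{m'-1}$), and in (a) one truncates. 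Second, the paper sidesteps all of this by using the other characterization: by Theorem~\ref{5.4} the $\mathfrak{F}'$-characters are exactly the $\mathfrak{F}$-head characters, and by Theorem~\ref{eslabon} the witnessing strong $H$-pair series may be taken along \emph{any} $H$-composition series, in particular one passing through $N$. Then every subgroup $S_i$ above $N$ contains $N$, so a character lying over $1_N$ automatically has $N$ in its kernel and its extension to $S_iH$ automatically factors through $S_iH/N$; the kernel/extension transfer that blocks your argument becomes trivial, and below $N$ all characters are trivial. I recommend either switching to that characterization, or supplying an honest proof of the extension transfer in~(a); it is not a formal consequence of the inflation bijection.
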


\begin{proof} Let  $H$ be an $\mathfrak{F}$-projector of $G$. First suppose that $\chi \in \irrf G$ is such that $N \subseteq \ker \chi$. 
By Theorem \ref{5.4}, there exists a strong $H$-pair series for $G$ 
$$(1,1) = (S_0,\varphi_0) \nor (S_1, \varphi_1)\nor \cdots\nor (S_m, \varphi_m)=(N, \varphi_m) \nor\cdots\nor (S_r, \varphi_r)=(G, \chi)\,.$$

Since $S_m \subseteq \ker \chi$, and $\varphi_m$ lies under $\chi$, we have that $\varphi_m=1_{N}$ is the trivial character. Thus, $\varphi_i=1_{S_i}$ for every $0 \leq i \leq m$. Now, let $\overline G =G/N$. We have that $\overline H =HN/N$ is an $\mathfrak{F}$-projector of $\overline G$.  Adopting the bar convention for the quotients and the tilde convention for their irreducible characters, we can consider the following $\overline H$-pair series for $\overline{G}$,
$$ (\overline 1, \tilde 1) = (\overline{S_m}, \tilde{\varphi}_m) \nor \cdots \nor (\overline {S_i}, \tilde{\varphi_i})\nor \cdots \nor (\overline{S_r}, \tilde{\varphi}_r)=(\overline G, \tilde{\chi})$$
which is a strong $\overline H$-pair series as it is not difficult to verify. Hence, $\tilde\chi $ is an $\mathfrak{F}'$-character of $\overline G$ and we have proved (a).

 Now, suppose that $\tilde{\chi} \in \irrf{G/N}$, then there is a strong $\overline H$-pair series for $\overline{G}$
 $$ (\overline 1, \tilde 1) = (\overline{S_0}, \tilde{\varphi_0}) \triangleleft \cdots \triangleleft (\overline{S_i}, \tilde{\varphi_i})\triangleleft \cdots \triangleleft (\overline{S_r}, \tilde{\varphi_r})=(\overline G, \tilde{\chi})\, .$$
Let
$$1= R_0  \triangleleft R_1 \triangleleft \cdots \triangleleft R_s= N = S_0$$
be an $H$-composition series of $N$. It suffices to consider the following series, viewing the characters of the quotient as characters of the group with $N$ in the kernel, as we have mentioned previously,
$$ (1,1)= (R_0, 1)\nor \cdots \nor (R_s,1_{R_s})=(N, 1_N)= ({S_0}, {\varphi_0})\nor  \cdots \nor ({S_r}, {\varphi_r})=( G, {\chi}),$$
which again may be checked to be a strong $H$-pair series for $G$, proving the statement (b).
\end{proof}

\begin{lemma}
Let $\mathfrak{F}$ be a saturated formation with $\mathfrak{N} \subseteq \mathfrak{F}$. Let $G$ be a solvable group. Suppose that $N$ is a normal subgroup of $G$ contained in $\ker \chi$, for all $\chi \in \irrf G$.
Then $$|\irrf  G | =|\irrf {G/N}|\,.$$
Moreover, $N \cap H \subseteq H'$ for any $\mathfrak{F}$-projector $H$ of $G$.
\end{lemma}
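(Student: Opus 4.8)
\textbf{The plan is to} prove the two assertions in turn, exploiting the character-counting machinery already developed together with the inflation/deflation correspondence established in the preceding lemma. For the first equality, let $H$ be an $\mathfrak{F}$-projector of $G$. The hypothesis is that $N\subseteq\ker\chi$ for every $\chi\in\irrf G$. By the previous lemma, inflation gives a well-defined map $\irrf{G/N}\to\irrf G$ (part (b)) whose image consists precisely of those $\mathfrak{F}'$-characters of $G$ having $N$ in their kernel (part (a)), and these are \emph{all} of $\irrf G$ by hypothesis. Since inflation of characters is injective and, by the lemma, lands in and surjects onto $\irrf G$, it furnishes a bijection $\irrf{G/N}\to\irrf G$. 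Hence $|\irrf G|=|\irrf{G/N}|$.

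\textbf{Next I would establish} the inclusion $N\cap H\subseteq H'$. By Proposition~\ref{basics}(b), $\overline H=HN/N$ is an $\mathfrak{F}$-projector of $\overline G=G/N$. The Proposition quantifying the number of $\mathfrak{F}'$-characters gives $|\irrf G|=|\irr{H/H'}|$ and $|\irrf{G/N}|=|\irr{\overline H/\overline H{}'}|$. Combining these with the first part of the statement yields $|\irr{H/H'}|=|\irr{\overline H/\overline H{}'}|$, which is to say $|H/H'|=|\overline H/\overline H{}'|$, the number of linear characters of an abelian quotient being its order. Now the natural surjection $H\to HN/N=\overline H$ (with kernel $H\cap N$) induces a surjection $H/H'\to\overline H/\overline H{}'$; writing $\b H=\overline H$, this surjection has kernel $(H\cap N)H'/H'$, so that
$$
\frac{|H/H'|}{|\overline H/\overline H{}'|}=\frac{|(H\cap N)H'/H'|}{1}=|(H\cap N)H':H'|\,.
$$
The equality of orders forces $(H\cap N)H'=H'$, that is, $N\cap H\subseteq H'$, as claimed.

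\textbf{The step I expect to be the main obstacle} is making the counting argument for $N\cap H\subseteq H'$ fully rigorous, specifically verifying that the induced map $H/H'\to\overline H/\overline H{}'$ is exactly the abelianization of $H\to\overline H$ and identifying its kernel as $(H\cap N)H'/H'$. This is a routine diagram-chase: the composite $H\to\overline H\to\overline H/\overline H{}'$ is surjective with kernel containing $H'$ (since the target is abelian) and containing $H\cap N$, hence factors through $H/H'$ with kernel $(H\cap N)H'/H'$. The only subtlety is keeping the two abelianizations straight; once that identification is in place, the equality of the orders of the two abelian groups collapses the quotient and delivers the containment. Everything else reduces to invoking the bijection from the first paragraph and the counting Proposition, both already available in the text.
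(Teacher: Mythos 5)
Your proposal is correct and follows essentially the same route as the paper: both rest on the preceding inflation/deflation lemma together with the proposition $|\irrf G|=|\irr{H/H'}|$, and both finish by comparing the orders of the abelianizations of $H$ and $HN/N$ (your conclusion $(H\cap N)H'=H'$ is the same index computation as the paper's $H\cap N=H'\cap N\subseteq H'$). The only cosmetic difference is that you obtain the bijection $\irrf{G/N}\to\irrf G$ directly from both parts of the previous lemma, whereas the paper uses only the deflation direction to get an injection $\irrf G\to\irrf{G/N}$ and then closes the gap inside the same chain of counting inequalities that yields the second assertion.
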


\begin{proof} The previous lemma gives us an injection $\irrf G \to \irrf {G/N}$. Hence
$$|H/H'|=|\irrf G| \leq |\irrf {G/N}|= \left| \frac{HN/N}{(HN/N)'}\right|=\left| \frac{H/(H \cap N)}{H'/(H' \cap N)} \right| \leq |H/H'|,$$
where $H$ is any $\mathfrak{F}$-projector of $G$. Thus, we have the first part of the lemma. We conclude for the second part by noting that $H \cap N = H' \cap N \subseteq H'$.
\end{proof}

\begin{thm} \label{kernels} Let $\mathfrak{F}$ be a saturated formation with $\mathfrak{N} \subseteq \mathfrak{F}$. Let $G$ be a solvable group, and let $H$ be an $\mathfrak{F}$-projector of $G$. Then
    $$
     \bigcap_ {\chi \in \irrf G} \ker \chi
    $$
is the largest normal subgroup $M$ of $G$ satisfying $M \cap H \subseteq H'$. \end{thm}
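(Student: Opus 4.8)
The plan is to prove the two inclusions establishing that $M := \bigcap_{\chi \in \irrf G} \ker \chi$ is the largest normal subgroup of $G$ with $M \cap H \subseteq H'$. The key tools are Lemma \ref{ker}, which supplies one direction essentially for free, and the two preceding lemmas on quotients, which let me pass to $G/M$ to extract the maximality.

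First I would check that $M$ itself satisfies $M \cap H \subseteq H'$. This is exactly the ``Moreover'' clause of the lemma immediately preceding this theorem: since $M$ is by definition contained in $\ker \chi$ for every $\chi \in \irrf G$, that lemma applies and yields $M \cap H \subseteq H'$. So $M$ is at least \emph{one} normal subgroup of $G$ with the desired property, and it remains only to show it is the largest.

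For maximality, let $N \lhd G$ be any normal subgroup with $N \cap H \subseteq H'$. I want $N \subseteq M$. But this is precisely the content of Lemma \ref{ker}: the hypothesis $N \cap H \subseteq H'$ guarantees $N \subseteq \ker \chi$ for every $\chi \in \irrf G$, hence $N \subseteq \bigcap_{\chi} \ker \chi = M$. Thus every normal subgroup satisfying the condition is contained in $M$, and combined with the previous paragraph this shows $M$ is the largest such subgroup, completing the proof.

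The main point to verify carefully is that the hypotheses of the ``Moreover'' lemma are genuinely met by $M$ — namely that $M$ is contained in the kernel of every $\mathfrak{F}'$-character — which holds tautologically by the definition of $M$ as the intersection of these kernels, so there is no real obstacle here. The proof is therefore short: Lemma \ref{ker} gives the containment $N \subseteq M$ for all candidate $N$, while the kernel lemma certifies that $M$ itself is a valid candidate. I would remark that, specializing to $\mathfrak{F} = \mathfrak{N}$ so that $H = C$ is a Carter subgroup, this recovers Theorem B exactly as stated in the introduction, and it is the exact analog of Navarro's Theorem C for $p$-solvable groups.
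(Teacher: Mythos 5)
Your proof is correct and follows essentially the same route as the paper's: the ``Moreover'' clause of the lemma immediately preceding the theorem yields $M \cap H \subseteq H'$, and Lemma \ref{ker} gives $N \subseteq M$ for every normal subgroup $N$ of $G$ with $N \cap H \subseteq H'$. The only difference is that the paper additionally invokes Lemma \ref{hup} to show the family of such normal subgroups is closed under products (so a largest one exists a priori), a step that is logically redundant once your two containments are established.
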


\begin{proof} Let $M = \bigcap_{\chi \in \irrf G} \ker \chi$. It is clear that $M \cap H \subseteq H'$ from the previous lemma.

Suppose $N$ and $K$ are two normal subgroups of $G$ such that $N \cap H \subseteq H'$ and $K \cap H \subseteq H'$. It follows  that $NK \cap H = (N \cap H)(K \cap H) \subseteq H'$, by Lemma \ref{hup}. Thus, there exists a maximal subgroup $N$ satisfying $N \cap H \subseteq H'$. By Lemma \ref{ker}, if $N$ is a normal subgroup of $G$ such that $N \cap H \subseteq H'$, then $N \subseteq M$. This completes the proof of the theorem.
\end{proof}

The above theorem is the exact analog of an unpublished result by Navarro, which we present here with his permission. Recall that, for a prime $p$ and a solvable group $G$, the Sylow $p$-subgroups of $G$ and the characters of $G$ of $p'$-degree correspond respectively to the $\mathfrak{F}$-projectors and the $\mathfrak{F}'$-characters of $G$ when $\mathfrak{F}$ is the formation of $p$-groups. Since this is a saturated formation not containing $\mathfrak{N}$, we cannot  apply Theorem \ref{kernels} directly.

\begin{thm}[Navarro] Let $G$ be a finite group, let $p$ be a prime, and let $P \in \syl p G $. Then
$$
K = \bigcap_{\chi \in {\rm Irr}_{p'}(G)} \ker\chi
$$
is the largest normal subgroup $K$ of $G$ satisfying $\norm K P \subseteq P'$. 
\end{thm}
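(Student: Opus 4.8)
The plan is to mirror the structure of the proof of Theorem \ref{kernels}, reducing the ``largest'' characterization to two statements: (I) every normal subgroup $N$ of $G$ with $\norm N P \subseteq P'$ is contained in $\ker\chi$ for every $\chi \in \irrp G$, and hence in $K$; and (II) the subgroup $K$ itself satisfies $\norm K P \subseteq P'$. Together these say exactly that $K$ is the largest normal subgroup with the required property, so no analogue of the product-closure step (Lemma \ref{hup}) is needed. The reason the normalizer $\norm K P = K \cap \norm G P$ replaces the intersection $K \cap P$ appearing in Theorem \ref{kernels} is that the formation of $p$-groups does not contain $\mathfrak{N}$, so $P$ fails to be self-normalizing; the subgroup $\norm G P$ now plays the role that the self-normalizing projector $H = \norm G H$ played before.

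For (I), I would argue by induction on $|G|$. Assuming $N \neq 1$, I pick a minimal normal subgroup $L$ of $G$ contained in $N$; since $L \subseteq N$ we have $\norm L P \subseteq \norm N P \subseteq P'$. It suffices to show $L \subseteq \ker\chi$, because then $\bar\chi \in \irrp{G/L}$ and a Frattini argument gives $\norm{N/L}{PL/L} = (\norm N P)L/L \subseteq (PL/L)'$, so the inductive hypothesis applied to $G/L$ yields $N/L \subseteq \ker{\bar\chi}$, that is $N \subseteq \ker\chi$. To kill $L$, note that since $\chi(1)$ is not divisible by $p$, the constituents of $\chi_L$ form a single $G$-orbit of $p'$-size, so $P$ fixes one of them: there is a $P$-invariant $\lambda \in \irr L$ of $p'$-degree lying under $\chi$. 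If $L$ is a $p'$-group, then $\norm L P = \cent L P$ by a coprimality argument, so $\cent L P \subseteq L \cap P' = 1$, and the Glauberman correspondence for the coprime action of the (solvable) $p$-group $P$ forces the only $P$-invariant irreducible character of $L$ to be the trivial one, giving $\lambda = 1_L$. If $L$ is a $p$-group, then $L \subseteq \oh p G \subseteq P$, whence $\norm L P = L \subseteq P'$; passing to the Clifford correspondent $\psi \in \irr{I_G(\lambda) \mid \lambda}$ of $\chi$, a reduction mod $p$ of the degree equation $\psi(1) = \sum_j m_j \xi_j(1)$ with $\xi_j \in \irr{P}$ produces a linear constituent $\xi_j$ of $\psi_P$; this $\xi_j$ restricts to $\lambda$ on $L$ but is trivial on $P' \supseteq L$, so again $\lambda = 1_L$. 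In either case $\chi_L = \chi(1)\,1_L$ and $L \subseteq \ker\chi$.

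For (II), I would use a counting argument paralleling the lemma preceding Theorem \ref{kernels}. Since $K$ lies in every kernel, inflation identifies $\irrp G$ with $\irrp{G/K}$, so $|\irrp G| = |\irrp{G/K}|$. The McKay theorem gives $|\irrp X| = |\irrp{\norm X P}|$, and for any group with a normal Sylow $p$-subgroup one checks that a character has $p'$-degree exactly when the derived subgroup of that Sylow lies in its kernel. For $W = \norm G P$ this yields $|\irrp W| = |\irr{W/P'}|$, while for $G/K$ the Frattini identity $\norm{G/K}{PK/K} = WK/K$ together with $(PK/K)' = P'K/K$ gives $|\irrp{G/K}| = |\irr{W/(W \cap P'K)}|$. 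Equating the two counts produces $|\irr{W/P'}| = |\irr{W/(W \cap P'K)}|$; since $P' \subseteq W \cap P'K$ and a proper quotient has strictly fewer conjugacy classes, this forces $W \cap P'K = P'$, and therefore $\norm K P = K \cap W \subseteq P'K \cap W = P'$, as desired.

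The main obstacle is the base case of (I): a merely $P$-invariant character need not be trivial even when $\norm L P \subseteq P'$ (take $G = P$ extraspecial, where the nontrivial central character of $L = \zent P = P'$ is $P$-invariant but lies under no character of $p'$-degree), so it is essential to exploit that $\lambda$ lies under a genuinely $p'$-degree character of the ambient group---precisely what the reduction mod $p$ achieves in the $p$-group case and what Glauberman achieves in the $p'$-case. These arguments dispatch all abelian minimal normal subgroups and hence settle the whole statement when $G$ is $p$-solvable; for a general finite group the nonabelian minimal normal subgroups $L$ with $p \mid |L|$ (where one only extracts $P \cap L \subseteq P'$) require substantial simple-group input, and step (II) relies on the McKay theorem, which is classical in the $p$-solvable case.
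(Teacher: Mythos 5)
Your reduction to (I) and (II) is the right reading of ``largest,'' and you are correct that the product-closure step is then dispensable (the paper uses Lemma 2.1 of \cite{IN} only to know beforehand that a largest such normal subgroup exists). But there is a genuine gap, exactly where you flag it: your proof of (I) covers only $p$-solvable groups, while the statement is about arbitrary finite groups, and a nonabelian minimal normal subgroup $L$ with $p \mid |L|$ is left unhandled. Moreover, your diagnosis that this case needs ``substantial simple-group input'' is wrong. The missing idea is Tate's theorem (Theorem 6.31 of \cite{Is}), a CFSG-free transfer result, and it is precisely how the paper treats a general $G$: from $N \cap P \subseteq \norm{N}{P} \subseteq P' \subseteq \Phi(P)$, Tate gives that $N$ has a normal $p$-complement $W$; then $\cent{W}{P} \subseteq \norm{N}{P} \cap W \subseteq P' \cap W = 1$, so your own Glauberman argument yields $W \subseteq \ker{\chi}$, and modulo $W$ the subgroup $N/W$ is a normal $p$-subgroup contained in the derived subgroup of the Sylow, which your linear-constituent argument kills. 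No induction and no minimal normal subgroups are needed; in fact Tate shows that your problematic case is vacuous, since a nonabelian characteristically simple group of order divisible by $p$ has no normal $p$-complement and hence can never satisfy $L \cap P \subseteq P'$. (Within your $p$-group case, the Clifford-correspondent detour is also unnecessary: any linear constituent $\xi$ of $\chi_P$ already has $L \subseteq P' \subseteq \ker{\xi}$, and triviality spreads over the $G$-orbit of constituents of $\chi_L$.)

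Your part (II) is correct as written --- the count $|\irrp{G}|=|\irrp{G/K}|$, the normal-Sylow criterion, the Frattini identities, and the forced equality $W \cap P'K = P'$ all check out --- but it invokes the McKay theorem for $G$ and for $G/K$, which for arbitrary finite groups is the full CFSG-dependent theorem (and this unpublished result of Navarro predates that proof; only the $p$-solvable case, Okuyama--Wajima, was classical). The paper reaches $\norm{K}{P} \subseteq P'$ with lighter tools: $K$ has a normal $p$-complement $Y$ (Theorem 7.7 of \cite{N2}); any $P$-invariant $\delta \in \irr{Y}$ extends to $YP$ and hence lies under some member of $\irrp{G}$, forcing $\delta = 1_Y$, whence $\cent{Y}{P}=1$ by Glauberman and $\norm{K}{P} = K \cap P$; after passing to $G/Y$ one may assume $K$ is a $p$-group, and then a linear $\lambda \in \irr{P}$ with $K \not\subseteq \ker{\lambda}$ would induce up to a member of $\irrp{G}$ whose kernel does not contain $K$, a contradiction. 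In short: your route works in the $p$-solvable setting, but for the theorem as stated you need Tate's theorem in (I), and in (II) you are trading the paper's elementary argument for the full strength of McKay.
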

\begin{proof}
If $N, M$ are normal subgroups of $G$, by Lemma 2.1 of \cite{IN}, we have that 
$$\norm {NM} P = \norm M P \norm N P \,.$$
Therefore  there exists a largest normal subgroup $N$ of $G$ such that $\norm N P$ is contained in $P'$.  Since $N \cap P = \norm{N}{P} \subset P'$, by Tate's theorem (Theorem 6.31 of \cite{Is}), it follows that $\Oh{p}{PN} \cap P = \Oh{p}{P} = 1$.  Therefore, $PN$ has a normal $p$-complement, and  $N$ also has one. Let $W$ be the normal $p$-complement of $N$. Then, we have $\cent{W}{P}=1$ because $\cent W P \subseteq \norm NP \cap W\subseteq P'\cap W=1$.     Let $\chi \in \irrp G$, and let $\nu \in \irr W$ be a $P$-invariant irreducible constituent of the restriction $\chi_W$, which exists beacause $\chi$ has $p'$-degree. By Glauberman’s correspondence (Theorem 2.9 of \cite{N2}), we have that $\nu$ is the trivial character of $W$. Hence, $W \subseteq \ker{\chi}$. So, working in $G/W$ we may assume that $N$ is a $p$-group contained in $P'$. Now, let $\epsilon \in \irr P$ be a linear constituent of $\chi_P$. Since $1_N$ is a constituent of $\epsilon_N$, we have that $N \subseteq \ker \chi$. 

Let 
$$  K = \bigcap_{\chi \in \irrp G} \ker\chi\,. $$  
It only remains to prove that $\norm{K}{P} \subseteq P'$.  
 Using Theorem 7.7 of \cite{N2}, $K$ has a normal $p$-complement $Y$. If $\delta \in \irr Y$ is $P$-invariant, then there is an extension $\eta$  of $\delta$ to $Y P$ (by Corollary 6.28 of \cite{Is}), and therefore $\delta$ lies under some irreducible character of $G$ of $p'$-degree (using that $\eta^G$ has $p'$-degree). Thus $\delta$ is the trivial character of $Y$, and hence $\cent YP = 1$ by Glauberman's correspondence. Thus $\norm K P = P \cap K$. We may assume that $K$ is a $p$-group. Suppose that $K$ is not contained in $P'$. Then some linear $\lambda \in \irr P$ does not contain $K$ in its kernel. By inducing up to $G$, we obtain some $\chi \in \text{Irr}_{p'}(G)$ over $\lambda$ and therefore a contradiction.
\end{proof}

\end{document}